\numberwithin{equation}{section}
\tikzset{middlearrow/.style={
        decoration={markings,
            mark= at position 0.5 with {\arrow{#1}} ,
        },
        postaction={decorate}
    }
}
\DeclarePairedDelimiterX{\inpr}[2]{(}{)}{#1\delimsize \vert #2}
\DeclarePairedDelimiterX{\intvc}[2]{[}{]}{#1,#2}
\DeclarePairedDelimiterX{\intvl}[2]{(}{]}{#1,#2}
\DeclarePairedDelimiterX{\intvr}[2]{[}{)}{#1,#2}
\DeclarePairedDelimiterX{\intvo}[2]{(}{)}{#1,#2}
\newcommand*{\Diff}{\mathrm D} 
\newcommand*{\dd}{\mathop{}\!{\operatorfont d}} 
\newcommand*{\R}{\mathbb{R}} 
\newcommand*{\C}{\mathbb{C}} 
\newcommand*{\SP}{\mathbb{S}} 
\newcommand*{\e}{\mathrm{e}} 
\newcommand*{\ii}{\mathrm{i}} 
\newcommand{\id}{\mathrm{id}} 
\DeclareMathOperator{\Supp}{supp} 
\DeclareMathOperator{\Tr}{Tr} 
\DeclareMathOperator{\Ext}{Ext} 
\newsavebox{\@brx}
\newcommand{\llangle}[1][]{\savebox{\@brx}{\(\m@th{#1\langle}\)}%
  \mathopen{\copy\@brx\kern-0.5\wd\@brx\usebox{\@brx}}}
\newcommand{\rrangle}[1][]{\savebox{\@brx}{\(\m@th{#1\rangle}\)}%
  \mathclose{\copy\@brx\kern-0.5\wd\@brx\usebox{\@brx}}}
\DeclareMathOperator{\Pt}{Pt}
\DeclarePairedDelimiter\brk{(}{)}
\DeclarePairedDelimiter\sqb{[}{]}
\DeclarePairedDelimiterX\norm[1]\lvert\rvert{ 
\ifblank{#1}{\,\cdot\,}{#1}
}
\DeclarePairedDelimiterX\Norm[1]\lVert\rVert{ 
\ifblank{#1}{\,\cdot\,}{#1}
}
\DeclarePairedDelimiterX{\floor}[1]{\lfloor}{\rfloor}{ 
\ifblank{#1}{\,\cdot\,}{#1}
}
\DeclarePairedDelimiterX{\ceil}[1]{\lceil}{\rceil}{    
\ifblank{#1}{\,\cdot\,}{#1}
}
\DeclarePairedDelimiterXPP\pNorm[1]{}{\lVert}{\rVert}{_p}{   
\ifblank{#1}{\,\cdot\,}{#1}
}
\DeclarePairedDelimiterXPP\spNorm[2]{}{\lVert}{\rVert}{_{#2}}{   
\ifblank{#1}{\,\cdot\,}{#1}
}
\DeclarePairedDelimiterXPP\Lin[2]{\mathrm{Lin}}{(}{)}{}{#1, #2} 
\providecommand{\st}{\,\vert\,}
\newcommand\stSymbol[1][]{%
\nonscript\;#1\vert
\allowbreak
\nonscript\;
\mathopen{}}
\DeclarePairedDelimiterX\set[1]\{\}{%
\renewcommand\st{\stSymbol[\delimsize]}
#1
}
\theoremstyle{plain}
\declaretheorem[name=Theorem, numberwithin=section]{theorem}
\declaretheorem[name=Theorem, numbered=no]{theorem*}
\declaretheorem[name=Definition, sibling=theorem]{definition}
\declaretheorem[name=Definition, numbered=no]{definition*}
\declaretheorem[name=Lemma, sibling=theorem]{lemma}
\declaretheorem[name=Lemma,numbered=no]{lemma*}
\declaretheorem[name=Proposition, sibling=theorem]{proposition}
\declaretheorem[name=Proposition, numbered=no]{proposition*}
\declaretheorem[name=Corollary, numbered=no]{corollary*}
\declaretheorem[name=Affirmation,numbered=no]{affirmation*}
\theoremstyle{definition}
\declaretheorem[name=Exercise,numbered=no, qed={$\triangle$}]{exo*}
\declaretheorem[name=Example,numbered=no, qed={$\triangle$}]{example*}
\declaretheorem[name=Remark, sibling=theorem,qed={$\triangle$}]{rmq}
\declaretheorem[name=Remark,numbered=no, qed={$\triangle$}]{rmq*}
\renewcommand{\PrintDOI}[1]{%
  \href{https://doi.org/#1}{doi:#1}%
}
\title{Trace theory for gauge-covariant Sobolev spaces}
\author[J.\ Van Schaftingen]{Jean Van Schaftingen}
\address[J. Van Schaftingen]{
Universit\'e catholique de Louvain, Institut de Recherche en Math\'ematique et Physique, Chemin du Cyclotron 2 bte L7.01.01, 1348 Louvain-la-Neuve, Belgium}
\email{Jean.VanSchaftingen@uclouvain.be}
\address[L. Winter]{
Universit\'e catholique de Louvain, Institut de Recherche en Math\'ematique et Physique, Chemin du Cyclotron 2 bte L7.01.01, 1348 Louvain-la-Neuve, Belgium}
\email{Leon.Winter@uclouvain.be}
\author[L.\ Winter]{Leon Winter}
\subjclass{46E35 (Primary) 26A33, 35Q40, 58E15, 81T13 (Secondary)}
\date{\today}
\thanks{Both authors were supported by the Fonds Spéciaux de Recherche (FSR), UClouvain. Van Schaftingen was supported by the Projet de Recherche T.0229.21 ``Singular Harmonic Maps and Asymptotics of Ginzburg--Landau Relaxations'' of the Fonds de la Recherche Scientifique--FNRS}
\keywords{Curvature of a connection; magnetic Sobolev spaces; Sobolev-Slobedeckii spaces}
\begin{document}
\pagenumbering{arabic}

\begin{abstract}
The traces of gauge-covariant Sobolev spaces on a Riemannian vector bundle for some connection are characterised as some gauge-covariant fractional Sobolev spaces when the curvature of the connection is bounded. 
The constants in the trace and extension theorems only depend on this curvature.
When the connection is abelian, one recovers known results for magnetic Sobolev spaces.
\end{abstract}

\maketitle

\section{Introduction}
A \emph{gauge-covariant Sobolev space} \(W^{1,p}_K(\mathcal{M},\mathcal{E})\) on a smooth Riemannian vector bundle \(\mathcal E\) above a smooth Riemannian manifold \(\mathcal{M}\) endowed with a metric connection \(K\) is a set of sections \(U \colon \mathcal{M} \to \mathcal E\) of \(\mathcal E\) for which a \emph{weak covariant derivative} \(\Diff_K U\) satisfies a condition of integrability. When the connection \(K\) is trivial, the gauge-covariant Sobolev spaces coincide with their vector valued counterparts, the usual Sobolev spaces, thereby generalising them.
The space \(W^{1, 2}_K(\mathcal{M},\mathcal{E})\) is the natural framework for scalar fields in various Yang--Mills equations~\cite{Choquet-Bruhat_Christodoulou_1981}, including  first-order Yang--Mills equations \cite{Krieger_Sterbenz_2013}, coupled boson Yang--Mills energies~\cite{Parker_1982}*{\S 2} and Yang--Mills--Higgs energies for sections in a (non-linear) bundle \citelist{\cite{Zhang_2004}\cite{Jost_Kessler_Wu_Zhu_2022}\cite{Chen_Song_2021}\cite{Pigati_Stern_2021}\cite{Song_2016}\cite{Ai_Song_Zhu_2019}\cite{Wei_Yang_Yu_2024}}.

In the study of boundary value problems in the calculus of variations and partial differential equations it is natural to determine whether a reasonable meaning can be given to \emph{boundary value} of a Sobolev-type function, known as the \emph{trace} of the function, and, if so, what are all \emph{admissible} boundary values and the associated estimates. 
For the vector valued Sobolev space 
\[
  W^{1,p}(\Omega, \R^m)
  = 
  \set[\bigg]{u \in L^p(\Omega, \R^m) \st 
  u \text{ is weakly differentiable and } \int_{\Omega} \norm{\Diff u}^p < +\infty}.
\]
Gagliardo's seminal \emph{trace theorem} \cite{Gagliardo_1957} (see also~\citelist{\cite{Uspenskii_1961}\cite{Adams_Fournier_2003}*{Thm.\ 7.39}\cite{diBenedetto_2016}*{Prop.\  17.1}\cite{Mironescu_Russ_2015}\cite{Mazya_2011}*{\S  10.1.1 Thm.\ 1}\cite{Leoni_fractionnal_2023}*{Thm.\ 9.4}}) states that if \(1 < p < +\infty\) and if \(\Omega \subset \R^{n+1}\) is a bounded open domain with Lipschitz boundary, or if \(\Omega = \R^{n+1}_+ = \R^{n}\times (0,+\infty) \subset \R^{n+1}\), then there exists a unique linear, continuous and surjective operator, called the \emph{trace operator}, 
\begin{equation*}
	\Tr \colon W^{1,p}(\Omega, \R^m) \to W^{1-1/p,p}(\partial\Omega,\R^m)
\end{equation*} 
and a linear and continuous operator, known as the \emph{extension operator}, 
\begin{equation*}
	\Ext \colon W^{1-1/p,p}(\partial \Omega, \R^m) \to W^{1,p}(\Omega, \R^m)
\end{equation*} 
such that \(\Tr \circ \Ext\) is the identity on the fractional Sobolev space 
\[
  W^{1-1/p,p} \brk{\partial\Omega, \R^m}
  =\set[\bigg]{ u \colon \partial\Omega \to \R^m \st \ \smashoperator{\iint_{\partial \Omega \times \partial \Omega}} \frac{\norm{u \brk{x} - u \brk{y}}^p}{\norm{x - y}^{p - 1 + \dim \partial \Omega}} \dd x \dd y < +\infty}.
\]
 
The goal of the present work is to extend the trace theory to Riemannian vector bundles. 
Let us first consider a trivial vector bundle \(E \cong \smash{\overline{\R}}^{n+1} \times F\) above the half space \(\smash{\overline{\R}}^{n + 1}_+ = \R^{n}\times [0,+\infty) \subset \R^{n+1}\), where \(F \cong \R^m\) is a Euclidean vector space (see for example \cite{JeffreyLee_2009}*{Def.\ 6.42}).
In this case, a covariant derivative can be expressed as \(\Diff_\Gamma = \Diff + \Gamma\) where \(\Gamma \colon \smash{\overline{\R}}^{n + 1}_+ \to \Lin{\R^{n+1}}{\mathfrak{o}(F)}\) is called a \emph{connection form}. In other words one sets for every \(x \in \smash{\overline{\R}}^{n + 1}_+\) and \(h \in \R^{n + 1}\),
\[
 \Diff_\Gamma u \brk{x}\sqb{h}
 = \Diff u \brk{x}\sqb{h} + \Gamma \brk{x}[h][u \brk{x}].
\]
The space \(\mathfrak{o}(F)\) is the Lie algebra associated to the group of isometries \(O \brk{F}\); the matrix representing \(\Gamma \brk{x}\) in an orthonormal basis of \(F\) is skew-symmetric.
This condition ensures the inequality
\(
  \norm{\Diff \norm{u}} \le \norm{\Diff_\Gamma u}\)  (\cref{lemma: diamagnetic inequality}).

In the case where \(\dim F = 2\), identify \(F \cong \C\), the connection form can be written as  \(\Gamma = \ii A\) for \(A\colon \smash{\overline{\R}}^{n + 1}_+ \to \R^{n+1}\), which corresponds to the magnetic vector potential.
Gauge-covariant Sobolev spaces arise naturally in settings of quantum
physics involving magnetic fields where they are called \emph{magnetic Sobolev spaces}~\citelist{\cite{Hall_2013}*{Ch.\ 23} \cite{Sontz_bundle}\cite{Balinsky_Evans_Lewis_2015}} and are a central tool in the studies of non-linear Schr\"odinger equations which have been the subject of extensive research~\citelist{\cite{Bonheure_Nys_VanSchaftingen_2019} \cite{Esteban_Lions_1989} \cite{Nguyen_Pinamonti_Squassina_Vecchi_2018}}.

Our first result generalises the recent work of Nguyen and the first author in which they obtained a characterisation of the corresponding magnetic Sobolev spaces \cite{Nguyen_VanSchaftingen_2020}.

\begin{theorem}\label{thm1}
Let \(n \geq 1\) and \(1 < p < +\infty\). If \(\Gamma \in C^1(\overline{\R}_+^{n+1}, \Lin{\R^{n+1}}{\mathfrak{o}(F)})\) and \(\spNorm{\mathscr K_\Gamma}{\infty} \leq \beta\), then there exists a positive constant \(C = C(n,p)>0\) depending only on \(n\) and \(p\) such that 
\begin{enumerate}[label=(\roman*)]
    \item For all \(U\in C^1_c(\smash{\smash{\overline{\R}}^{n + 1}_+},F)\)
    \begin{multline}\label{traces thm1 eq:1}
        \norm{U (\cdot, 0)}_{W^{1-1/p,p}_{\Gamma_\shortparallel}(\R^{n},F)} + \beta^{(p-1)/2p} \spNorm{U(\cdot,0)}{L^p(\R^n,F)}  \\
        \leq C (\spNorm{\nabla_\Gamma U}{L^p(\smash{\R^{n + 1}_+},\Lin{\R^{n+1}}{F})} + \beta^{1/2} \spNorm{U}{L^p(\smash{\R^{n + 1}_+},F )} ),
    \end{multline}
    \item For all \(u\in  C^1_c(\R^n,F)\), there exists \(U\in  C^1_c(\smash{\smash{\overline{\R}}^{n + 1}_+},F)\) depending linearly on \(u\) such that \(U(x,0)=u\brk{x}\) on \(\R^n\) and
    \begin{multline}\label{traces thm1 eq:2}
        \spNorm{\nabla_\Gamma U}{L^p(\smash{\R^{n + 1}_+},\Lin{\R^{n+1}}{F})} + \beta^{1/2} \spNorm{U}{L^p(\smash{\R^{n + 1}_+},F)} \\
        \leq C (\norm{u}_{W^{1-1/p}_{\Gamma_\shortparallel}(\R^n,F)} + \beta^{(p-1)/2p} \spNorm{u}{L^p(\R^n,F)}).
    \end{multline}
\end{enumerate}
\end{theorem}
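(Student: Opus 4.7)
The plan is to adapt Gagliardo's classical proof of the trace theorem to the gauge-covariant setting, systematically replacing ordinary differences by gauge-covariant ones via \emph{parallel transport}. Given a piecewise-\(C^1\) curve \(\gamma\) in \(\overline{\R}^{n+1}_+\), write \(P_\gamma \colon F \to F\) for the isometry obtained by solving the \(\Gamma\)-parallel transport equation along \(\gamma\). The key fact, a direct consequence of the definition of \(\mathscr K_\Gamma\), is that if \(\gamma_1, \gamma_2\) are two curves with common endpoints bounding a topological disk of area \(A\), then \(\Norm{P_{\gamma_1} P_{\gamma_2}^{-1} - \id}_{\mathrm{op}} \lesssim \beta A\). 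The hypothesis \(\spNorm{\mathscr K_\Gamma}{\infty} \leq \beta\) enters the proof only through this estimate, which guarantees that all constants depend only on the gauge-invariant quantity \(\beta\).

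\textbf{Trace estimate (i).} For \(x, y \in \R^n\) with \(r := \norm{x-y}\), introduce the auxiliary point \(z := ((x+y)/2, r/2) \in \R^{n+1}_+\) and decompose
\[
U(x,0) - P_\sigma\, U(y,0) = T_1 + T_2 + T_3,
\]
where \(\sigma\) is the segment in \(\R^n\) from \((y,0)\) to \((x,0)\); the terms \(T_1, T_2\) are the gauge-covariant differences along the segments from \((x,0)\) to \(z\) and from \(z\) to \((y,0)\), each expressible as a path integral of \(\nabla_\Gamma U\) by applying the fundamental theorem of calculus to parallel-transported trivialisations; and \(T_3\) is the parallel-transport holonomy around the triangle formed with \(\sigma\), applied to \(U(y,0)\), satisfying \(\norm{T_3} \leq C \beta r^2 \norm{U(y,0)}\). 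Inserting this into the Gagliardo double integral, averaging \(z\) over a ball of radius of order \(r\) to produce a convolution structure, and splitting the \(T_3\)-contribution at the scale \(r = \beta^{-1/2}\) (with the trivial bound \(\norm{T_3} \leq 2\norm{U(y,0)}\) used for \(r \geq \beta^{-1/2}\)) yields the bound on \(\norm{U(\cdot,0)}_{W^{1-1/p,p}_{\Gamma_\shortparallel}}\) by \(\spNorm{\nabla_\Gamma U}{L^p} + \beta^{1/2}\spNorm{U}{L^p}\). The additional term \(\beta^{(p-1)/(2p)}\spNorm{U(\cdot,0)}{L^p}\) follows from the classical one-dimensional trace inequality applied to the scalar function \(\norm{U}\) through \cref{lemma: diamagnetic inequality}, combined with Young's inequality optimised at the scale \(\beta^{(p-1)/(2p)}\).

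\textbf{Extension estimate (ii).} I construct the extension by mollification with parallel transport,
\[
U(x,t) := \int_{\R^n} \phi(z)\, P_{\ell_{x,t,z}}\, u(x + tz) \dd z,
\]
where \(\phi \in C_c^\infty(\R^n)\) is a fixed nonnegative bump with \(\int \phi = 1\) and \(\ell_{x,t,z}\) denotes the straight segment from \((x+tz,0)\) to \((x,t)\). Differentiating under the integral sign and exchanging parallel transports along competing broken paths --- each exchange producing an error controlled by the holonomy estimate and hence of size \(\beta t^2\) --- expresses \(\nabla_\Gamma U(x,t)\) as a convolution at scale \(t\) of the gauge-covariant Gagliardo differences that define the \(W^{1-1/p,p}_{\Gamma_\shortparallel}\)-seminorm, plus an error of size \(O(\beta t)\) times a local average of \(\norm{u}\). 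Integrating these two contributions against \(\dd x \dd t\) --- the classical Gagliardo computation for the first, a cut-off at \(t = \beta^{-1/2}\) for the second --- gives the required bound on \(\spNorm{\nabla_\Gamma U}{L^p}\); the \(L^p\) bound on \(U\) itself follows directly from the construction by Jensen's inequality.

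\textbf{The main obstacle.} The central difficulty lies in the combinatorics of parallel transports: each gauge-covariant difference between values at distinct points requires a choice of connecting curve, and comparing two different choices produces a holonomy whose enclosed area must be controlled. The precise dependence on \(\beta\) in the statement relies on choosing the auxiliary paths so that their enclosed areas are of the \emph{optimal} order \(r^2\) (for (i)) or \(t^2\) (for (ii)), and then matching the resulting curvature discrepancies to the weighted \(L^p\) terms in the statement via the natural scale cut-off at \(\beta^{-1/2}\), so that the final constants depend only on \(n\) and \(p\).
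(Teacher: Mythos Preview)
Your proposal is correct and follows the same strategy as the paper: a Gagliardo-type argument with differences replaced by parallel-transported differences, the discrepancy between competing paths controlled by the holonomy estimate of \cref{estimate holonomy triangle}, and an extension by parallel-transported mollification. Two technical choices differ from the paper and are worth flagging.

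For part (i), you apply the holonomy around the triangle to \(U(y,0)\), obtaining \(\norm{T_3}\le C\min\{2,\beta r^2\}\norm{U(y,0)}\) and then split at \(r=\beta^{-1/2}\); this produces a term \(\beta^{(p-1)/(2p)}\Norm{U(\cdot,0)}_{L^p(\R^n)}\) on the right-hand side, which you then have to absorb via the separate boundary \(L^p\) estimate. The paper instead applies the holonomy to \(U(z)\) at the \emph{interior} auxiliary point (\cref{traces: prop2.1 lemma}) and uses the interpolation \(\min\{2,\beta\norm{\Delta}\}\le\sqrt{2\beta\norm{\Delta}}\le\beta^{1/2}r\), which lands directly on \(\beta^{1/2}\Norm{U}_{L^p(\R^{n+1}_+)}\) after a single change of variables and avoids the detour through the boundary \(L^p\) norm. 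Your route works, but is slightly more circuitous. Also, the ``averaging \(z\) over a ball'' step you mention is unnecessary: with the fixed \(z=((x+y)/2,\norm{x-y})\), a direct change of variables as in \eqref{proof prop 2.1 eqf:0} already produces the desired \(L^p\) integral of \(\nabla_\Gamma U\).

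For part (ii), the paper uses a broken path (first along \(\R^n\), then vertically) rather than your direct segment, and a Gaussian factor \(\e^{-\beta z_{n+1}^2}\) rather than a cutoff at height \(\beta^{-1/2}\); both variants work and yield the same scaling. One point to be careful about in your write-up: the cutoff must be part of the \emph{definition} of \(U\), not merely invoked when estimating the curvature error term, since without it \(\Norm{U}_{L^p(\R^{n+1}_+)}=+\infty\) (your mollified \(U(x,t)\) has \(\norm{U(x,t)}\) comparable to a local average of \(\norm{u}\) uniformly in \(t\)). Once the cutoff is built into \(U\), both the \(\nabla_\Gamma U\) and the \(\beta^{1/2}U\) estimates go through as you outline.
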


The quantity \(\mathscr K_\Gamma\) in \cref{thm1} is the \emph{curvature} \(2\)-form of the connection induced by \(\Gamma\) and is defined as
\[
  \mathscr K_\Gamma = \dd \Gamma + \Gamma \wedge \Gamma
\]
 where \(\dd \Gamma\) is the exterior derivative of \(\Gamma\) and \(\wedge\) denotes the wedge product of differential forms; as explained in \cref{sect: preliminaries}, for every \(x \in \smash{\overline{\R}}^{n + 1}_+ \) \(v, w \in \R^{n + 1}\),
\[
  \mathscr K_\Gamma \brk{x}\sqb{v, w} = \dd \Gamma \brk{x}\sqb{v, w}+ \Gamma\brk{x}\sqb{v} \wedge \Gamma\brk{x}\sqb{w}.
\]
In the particular case where \(\dim F = 2\) so that \(F \cong \C\) and \(\mathfrak{o} \brk{F} \cong \ii \R\), writing \(\Gamma = \ii A\), we have 
\(\Gamma \brk{x}\sqb{v} \wedge \Gamma \brk{x}\sqb{w} = (\ii \wedge \ii) \brk{A \brk{x} \cdot v}\brk{A \brk{x} \cdot w} = 0\) and thus
\(
  \mathscr{K}_{\Gamma} = \dd \Gamma
\) and we recover the condition of \cite{Nguyen_VanSchaftingen_2020}.

The characterisation of \cref{thm1} relies on the \emph{fractional gauge-covariant Sobolev space} \(W^{s,p}_{\Gamma_\shortparallel}(\R^n,F)\) 
defined for \(1 \leq p < +\infty\), \(0<s<1\) and \(\Gamma_\shortparallel \in C^0(\R^n, \Lin{\R^n}{\mathfrak{o}(F)})\), as the set of all functions \(u\in L^p(\R^n,F)\) whose \emph{gauge-covariant Gagliardo seminorm}
\begin{equation}\label{gauge cov Gagliardo_1957}
	\norm u_{W^{s,p}_{\Gamma_\shortparallel}(\R^n, F)} = \biggl(\iint_{\R^n\times\R^n}\frac{\norm{u\brk{x} - R^{\Gamma_\shortparallel}\brk{x, y} u\brk{y}}^p}{\norm{x-y}^{n+sp}}\dd x \dd y\biggr)^{1/p},
\end{equation}
is finite,
where \(R^{\Gamma_\shortparallel}(x,y)\) denotes the parallel transport from \(y\) to \(x\) along the affine path \( t \in [0,1] \mapsto (1-t)x + t y\) induced by \(\Gamma_\shortparallel\) with \( \Gamma_\shortparallel\brk{x}[v] = \Gamma(x,0)[v,0]\).

An important feature of the statement of \cref{thm1} is its \emph{gauge-invariance}:
if \(\Gamma' = \phi \Diff_{\Gamma} \brk{\phi^{-1}}\) for some local isometry \(\phi \in C^1 \brk{\smash{\overline{\R}}^{n + 1}, O\brk{F}}\) which, in the context of vector bundles, is called a \emph{gauge transformation}, then 
\begin{align*}
 \Diff_{\Gamma'} \brk{\phi U} &= \phi \Diff_{\Gamma} U,\\
   \mathscr{K}_{\Gamma'} \phi
  &= \phi \mathscr{K}_{\Gamma}\\
\intertext{and}
  R^{\Gamma'_\shortparallel} \brk{x, y} \phi \brk{y} u\brk{y}
  &= \phi\brk{x} R^{\Gamma_\shortparallel}\brk{x, y} u\brk{y},
\end{align*}
(see \eqref{eq: D_Gamma gauge cov property}, \eqref{eq_curvature_gauge}, \cref{lemma: Pt gauge-invariance} and \eqref{eq_ooM3isahgu8ahghoh8ienizu}), so that \cref{thm1} is invariant under the change \(U \mapsto \phi U\), \(u \mapsto \phi u\) and \(\Gamma \mapsto \phi \Diff_{\Gamma} \brk{\phi^{-1}}\).

The proof of \cref{thm1} follows essentially the strategy of estimating along paths and extending through averaging that goes back to Gagliardo \cite{Gagliardo_1957}.
In order to deal with the covariant derivative in the abelian case, it was necessary to introduce phase shifts given by a path integral of the vector potential; even though those do not commute along paths, the deviation from commutativity could be rewritten thanks to the Stokes formula applied to the circulation of the vector potential in an integral of the curvature that could be estimated \cite{Nguyen_VanSchaftingen_2020}.
In order to prove \cref{thm1}, we replace the phase shifts obtained by integration by parallel transport; the deviance from commutativity is estimated thanks to holonomy estimates from \cite{Chanillo_VanSchaftingen_2020}.
The estimate on the trace and the construction of the extension are performed in \cref{prop traces,prop extension} respectively. 

By a density argument, see \cref{section: density}, we obtain the following
 characterisation of traces for gauge-covariant Sobolev mappings on the half-space, generalising \cite{Nguyen_VanSchaftingen_2020}*{Thm.\ 1.2} to the non-abelian case.

\begin{theorem}\label{trace thm on Rn}
   Let \(n\geq 1\) and \(1<p<+\infty\). If \(\Gamma\in  C^1(\smash{\overline{\R}}^{n + 1}_+, \Lin{\R^{n+1}}{\mathfrak{o}(F)} )\) and \(\spNorm{\mathscr{K}_\Gamma}{\infty} < +\infty\), then there exists a unique continuous linear and surjective operator
   \begin{equation*}
       \Tr\colon W^{1,p}_\Gamma(\smash{\overline{\R}}^{n+1}_+, F) \to W^{1-1/p,p}_{\Gamma_\shortparallel} (\R^n , F)
   \end{equation*}
   called the \emph{trace operator}, which satisfies \(\Tr U(\cdot) = U(\cdot,0)\) for all \(U\in C_c^1(\smash{\smash{\overline{\R}}^{n + 1}_+},F)\). Furthermore, there exists a linear and continuous \emph{extension operator}
   \begin{equation*}
       \Ext\colon W^{1-1/p,p}_{\Gamma_\shortparallel}(\R^n, F) \to W^{1,p}_\Gamma(\smash{\overline{\R}}^{n + 1}_+,F)
   \end{equation*}
   such that \(\Tr\circ \Ext\) is the identity map on \(W^{1-1/p,p}_{\Gamma_\shortparallel}(\R^n,F)\) and the estimates \eqref{traces thm1 eq:1} and \eqref{traces thm1 eq:2} of \cref{thm1} with \(u=\Tr U\) and \(U=\Ext u\) hold.
\end{theorem}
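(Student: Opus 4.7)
The plan is to deduce \cref{trace thm on Rn} from \cref{thm1} by a standard bounded-linear-transformation argument. Setting \(\beta = \spNorm{\mathscr K_\Gamma}{\infty} < +\infty\), the estimates \eqref{traces thm1 eq:1} and \eqref{traces thm1 eq:2} become continuity bounds with a constant depending only on \(n\) and \(p\); they therefore extend by continuity as soon as the density of \(C^1_c\) in the relevant gauge-covariant Sobolev spaces is available from \cref{section: density}.

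First, I would construct the trace. The linear map \(U \mapsto U(\cdot, 0)\) sends \(C^1_c(\smash{\overline{\R}}^{n+1}_+, F)\) into \(C^1_c(\R^n, F) \subset W^{1-1/p,p}_{\Gamma_\shortparallel}(\R^n, F)\), and \eqref{traces thm1 eq:1} is exactly the statement of its continuity from the \(W^{1,p}_\Gamma\)-norm on the source to the \(W^{1-1/p,p}_{\Gamma_\shortparallel}\)-norm on the target. Since \(C^1_c(\smash{\overline{\R}}^{n+1}_+, F)\) is dense in \(W^{1,p}_\Gamma(\smash{\overline{\R}}^{n+1}_+, F)\) by \cref{section: density}, it admits a unique continuous linear extension \(\Tr\); uniqueness among continuous linear operators agreeing with the boundary restriction on smooth sections is automatic from density, and \eqref{traces thm1 eq:1} passes to the limit by continuity of the two norms.

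Next, the extension operator. \cref{thm1}(ii) furnishes a linear map \(E_0 \colon C^1_c(\R^n, F) \to C^1_c(\smash{\overline{\R}}^{n+1}_+, F)\) with \(E_0(u)(\cdot, 0) = u\) satisfying \eqref{traces thm1 eq:2}, so that \(E_0\) is continuous from \(C^1_c(\R^n, F)\), equipped with the \(W^{1-1/p,p}_{\Gamma_\shortparallel}\)-norm, into \(W^{1,p}_\Gamma(\smash{\overline{\R}}^{n+1}_+, F)\). The density of \(C^1_c(\R^n, F)\) in \(W^{1-1/p,p}_{\Gamma_\shortparallel}(\R^n, F)\) (again supplied by \cref{section: density}) then produces the desired continuous linear extension \(\Ext\), and \eqref{traces thm1 eq:2} again passes to the limit. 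The identity \(\Tr \circ \Ext = \id\) holds on \(C^1_c(\R^n, F)\) by construction of \(E_0\), and hence on all of \(W^{1-1/p,p}_{\Gamma_\shortparallel}(\R^n, F)\) by continuity of both operators together with density; surjectivity of \(\Tr\) follows immediately.

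The real obstacle is not the density argument itself but the prior verification, carried out in \cref{section: density}, that \(C^1_c\) is dense in both gauge-covariant Sobolev spaces. The half-space density in \(W^{1,p}_\Gamma\) can be handled by truncation and mollification, using that \(\Diff_\Gamma - \Diff\) acts as a bounded multiplier. The fractional case is the more delicate one, because the Gagliardo seminorm \eqref{gauge cov Gagliardo_1957} couples points \(x\) and \(y\) through the parallel transport \(R^{\Gamma_\shortparallel}(x, y)\) rather than the identity; one must mollify in a gauge-aware fashion and exploit that the deviation of \(R^{\Gamma_\shortparallel}(x, y)\) from the identity is controlled by the curvature, in the same spirit as the holonomy estimates invoked in the proof of \cref{thm1}.
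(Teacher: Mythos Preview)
Your proposal is correct and matches the paper's approach: the paper likewise deduces \cref{trace thm on Rn} from \cref{thm1} (via \cref{prop traces,prop extension}) by the bounded-linear-transformation argument, using the density results \cref{density in WA,density in Wsp A'} of \cref{section: density} to pass from \(C^1_c\) to the full gauge-covariant Sobolev spaces.

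One remark on your final paragraph: the fractional density in \cref{density in Wsp A'} is actually less delicate than you anticipate. Rather than a gauge-aware mollification controlled by curvature, the paper first truncates, then observes that for compactly supported \(u\) the spaces \(W^{s,p}_{\Gamma_\shortparallel}\) and the classical \(W^{s,p}\) coincide, because \(\norm{\id_F - R^{\Gamma_\shortparallel}(x,y)} \le \Norm{\Gamma_\shortparallel}_{L^\infty(\Supp u)}\,\norm{x-y}\) (this uses only local boundedness of \(\Gamma_\shortparallel\), not the curvature). One then appeals to classical density of smooth functions in \(W^{s,p}\). The half-space density \cref{density in WA} is handled the same way: truncate, then use that on bounded supports the \(W^{1,p}_\Gamma\) and \(W^{1,p}\) norms are equivalent.
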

Finally, we obtain through a local trivialisation argument an analogue result when \(\mathcal{M}\) is a smooth compact Riemannian manifold with boundary \(\partial \mathcal{M}\) and \(\mathcal E\) is a smooth vector bundle above \(\mathcal{M}\) endowed with a metric connection \(K\). Precisely, we show that the fractional gauge-covariant Sobolev space space \( W^{1-1/p,p}_K(\partial \mathcal{M},\mathcal{E})\) consisting of all the sections \(u \in L^p(\partial\mathcal{M},\mathcal{E})\) which satisfy
\begin{equation}
	\smashoperator[r]{\iint_{\substack{\brk{x, y}\in\partial \mathcal{M} \times \partial \mathcal{M}\\ \operatorname{dist}_{\partial \mathcal{M}} \brk{x, y} < \mathrm{inj}_{\partial \mathcal{M}}}}} \frac{\norm{u\brk{x}-R^K_{\partial \mathcal{M}}\brk{x, y}u\brk{y}}^p}{\operatorname{dist}_{\partial \mathcal{M}}\brk{x, y}^{n+p-1}} \dd x  \dd y < +\infty
\end{equation} 
is the trace space of the gauge-covariant Sobolev space \(W_K^{1,p}(\mathcal{M},E)\). Here, \(\operatorname{dist}_{\partial \mathcal{M}}\) is the distance on \(\partial\mathcal{M}\), \(\mathrm{inj}_{\partial \mathcal{M}} > 0\) is the injectivity radius of \(\partial \mathcal{M}\) and \(R_{\partial \mathcal{M}}^K\brk{x, y}\) is the parallel transport (induced by the connection \(K\)) from \(y\) to \(x\) along a geodesic connecting the points. For more details, see \cref{sect: geometric tools}.

\section{Connection forms, covariant derivatives and transport}\label{sect: preliminaries}

\subsection{Covariant derivatives}
Given an open set \(\Omega \subseteq \R^{n+1}\) and a finite dimensional Euclidean vector space \(F \cong \R^m\) with inner product \((\cdot \vert \cdot)\) and induced norm \(\norm{}\),
a connection form \(\Gamma \colon \Omega \to \Lin{\R^{n+1}}{\mathfrak{o}(F)}\) induces a covariant derivative \(\Diff_\Gamma = \Diff + \Gamma\) on \(\Omega\) defined for any smooth function \(U\colon \Omega \to F\), \(x \in \Omega\) and \(v \in \R^{n+1}\) by 
\begin{equation}
\label{eq_def_DGamma}
 	\Diff_\Gamma U\brk{x} [v] = \Diff U\brk{x}[v] + \Gamma\brk{x}[v]U\brk{x}.
\end{equation}
If \(U^\prime = \phi U\) for a gauge transformation \(\phi\in C^1(\Omega, O(F))\), one computes
\begin{equation}
	\label{eq: computation for Gamma^prime}
	\begin{split}
		\Diff_\Gamma U\brk{x} 
		& = \Diff(\phi\brk{x}^{-1}U^\prime\brk{x}) + \Gamma\brk{x}\phi\brk{x}^{-1}U^\prime\brk{x} \\
		& =  \phi\brk{x}^{-1} \Diff U^\prime\brk{x} + (\Gamma\brk{x}\phi\brk{x}^{-1}+ \Diff(\phi\brk{x}^{-1}))U^\prime\brk{x} \\
		& = \phi\brk{x}^{-1}\Diff U^\prime\brk{x} + \phi\brk{x}^{-1}(\phi\brk{x}\Gamma\brk{x}\phi\brk{x}^{-1} + \phi\brk{x}\Diff(\phi\brk{x}^{-1})) U^\prime\brk{x}.
	\end{split}
\end{equation}
If we define a new connection form \(\Gamma^\prime \colon \Omega \to \Lin{\R^{n+1}}{\mathfrak{o}(F)}\) by
\begin{equation}
	\label{eq: Gamma^prime}
	\Gamma^\prime = \phi \Diff_{\Gamma} (\phi^{-1})
	= \phi\Diff(\phi^{-1}) + \phi \Gamma \phi^{-1} 
	= -(\Diff \phi) \phi^{-1} + \phi \Gamma \phi^{-1}
\end{equation}
where \(\phi^{-1} \brk{x} = \phi \brk{x}^{-1} \in O (F)\) is the pointwise inverse of \(\phi \brk{x} \in O(F)\), the corresponding covariant derivative \(\Diff_{\Gamma^\prime}\) given by \eqref{eq_def_DGamma} has the \emph{gauge-covariance property}: if \(U^\prime = \phi U\), then 
\begin{equation}
	\label{eq: D_Gamma gauge cov property}
  \Diff_{\Gamma^\prime} U^\prime = \phi \Diff_{\Gamma} U.
\end{equation}

The \emph{weak covariant derivative}, also denoted by \(\Diff_\Gamma\), is defined analogously to \eqref{eq_def_DGamma}: for any weakly differentiable \(U \in W^{1,1}_{\mathrm{loc}}(\Omega,F)\) with weak derivative \(\Diff U\), the weak covariant derivative \(\Diff_\Gamma U\colon \Omega \to \Lin{\R^{n+1}}{F}\) of \(U\) is given by
\begin{equation}
	\label{eq_def_weak_DGamma}
	\Diff_\Gamma U = \Diff U + \Gamma U.
\end{equation}
The weak covariant derivative also satisfies the gauge-covariant property \eqref{eq: D_Gamma gauge cov property}.

When \(\Gamma \in L^1_{\mathrm{loc}}(\Omega, \Lin{\R^{n+1}}{\mathfrak{o}(F)})\), we define for \(1 \leq p < +\infty\) the \emph{first order gauge-covariant Sobolev space}
\begin{equation}
	\label{def: W1p_Gamma}
	W^{1,p}_\Gamma(\Omega, F) = \set[\Big]{U \in W^{1,1}_\mathrm{loc}(\Omega, F) \st \int_\Omega \norm{U}^p + \norm{\Diff_\Gamma U}^p < + \infty}.
\end{equation}
The gauge-covariant Sobolev space \(W^{1,p}_\Gamma(\Omega,F)\) forms a Banach space (and a Hilbert space when \(p=2\)) when endowed with the norm
\begin{equation*}
	\Norm{U}_{W^{1,p}_\Gamma(\Omega,F)} 
	= \brk[\bigg]{\int_\Omega \norm{U}^p + \norm{\Diff_\Gamma U}^p}^{1/p} 
	= \Bigl(\Norm{U}_{L^p(\Omega,F)}^p + \Norm{\Diff_\Gamma U}_{L^p(\Omega,\Lin{\R^{n+1}}{F})}^p\Bigr)^{1/p}.
\end{equation*}
The gauge-covariant Sobolev spaces \(W^{1,p}_\Gamma\) are gauge-invariant in the sense that if \(\Gamma^\prime\) is given by \eqref{eq: Gamma^prime}, then \(W^{1,p}_{\Gamma^\prime}=W^{1,p}_\Gamma\).
To fix the ideas, the norm used on \(\Diff_\Gamma U\brk{x}\) is here and in what follows the Hilbert--Schmidt (or Frobenius) norm on \(\Lin{\R^{n+1}}{F}\)~\citelist{\cite{Adams_Fournier_2003}*{Def. 6.57}},
\begin{equation*}
 \norm{\Diff_{\Gamma} U \brk{x}} = \sqrt{\operatorname{tr}(\Diff_{\Gamma} U \brk{x}^\ast\Diff_{\Gamma} U \brk{x})}.
\end{equation*}

Since 
\begin{equation}
\label{eq: metric compatible}
\inpr{\Diff_\Gamma U_1 }{ U_2} + \inpr{U_1 }{ \Diff_\Gamma U_2} = \Diff\inpr{ U_1 }{ U_2} + \inpr{\Gamma U_1 }{ U_2} + \inpr{U_1 }{ \Gamma U_2},
\end{equation}
the condition that \(\Gamma\) is valued in the Lie algebra \(\mathfrak{o}(F)\) of the orthogonal group \(O(F)\) is equivalent to the requirement that the associated covariant derivative \(\Diff_\Gamma\) is a \emph{metric connection} (or \emph{compatible with the metric}), that is, for all \(U_1, U_2 \in W^{1,1}_{\mathrm{loc}}(\Omega,F)\) (see for example~\cite{DifferentialGeometryTu}*{\S 10.5})
\begin{equation*}
	\Diff \inpr{U_1}{U_2} =
	\inpr{\Diff_\Gamma U_1 }{ U_2}
	+ \inpr{U_1 }{ \Diff_\Gamma U_2}.
\end{equation*}
As a consequence we get the following inequality.

\begin{lemma}\label{lemma: diamagnetic inequality}
	Let \(\Gamma \colon \Omega \to \Lin{\R^{n+1}}{\mathfrak{o}(F)}\) and let \(U \in W^{1,1}_{\mathrm{loc}}(\Omega, F)\). For almost all \(x \in \Omega\) and \(v \in \R^{n + 1}\), we have
	\begin{equation*}
		\norm{\Diff \norm{U}\brk{x}\sqb{v}} \leq \norm{\Diff_\Gamma U\brk{x}\sqb{v}}.
	\end{equation*}
\end{lemma}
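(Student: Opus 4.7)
The plan is to reduce the inequality to a pointwise computation via the regularisation \(U \mapsto \sqrt{\norm{U}^2 + \varepsilon^2}\) and then exploit the skew-symmetry of \(\Gamma\).

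First, I would observe that for every \(\varepsilon > 0\), the function \(x \mapsto \sqrt{\norm{U\brk{x}}^2+\varepsilon^2}\) belongs to \(W^{1,1}_{\mathrm{loc}}(\Omega)\) since the mapping \(y \mapsto \sqrt{\norm{y}^2 + \varepsilon^2}\) is smooth with bounded gradient. By the chain rule for Sobolev functions, for almost every \(x \in \Omega\) and every \(v \in \R^{n+1}\),
\begin{equation*}
 \Diff \sqrt{\norm{U}^2+\varepsilon^2}\brk{x}\sqb{v}
 = \frac{\inpr{U\brk{x}}{\Diff U\brk{x}\sqb{v}}}{\sqrt{\norm{U\brk{x}}^2+\varepsilon^2}}.
\end{equation*}

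Second, I would use the key algebraic input: since \(\Gamma\brk{x}\sqb{v} \in \mathfrak{o}\brk{F}\) is skew-symmetric, the quadratic form \(w \mapsto \inpr{w}{\Gamma\brk{x}\sqb{v}w}\) vanishes identically on \(F\). Applied at \(w = U\brk{x}\) and combined with \eqref{eq_def_weak_DGamma}, this gives
\begin{equation*}
 \inpr{U\brk{x}}{\Diff U\brk{x}\sqb{v}}
 = \inpr{U\brk{x}}{\Diff_\Gamma U\brk{x}\sqb{v}}.
\end{equation*}
By the Cauchy--Schwarz inequality, \(\norm{\inpr{U\brk{x}}{\Diff_\Gamma U\brk{x}\sqb{v}}} \leq \norm{U\brk{x}} \norm{\Diff_\Gamma U\brk{x}\sqb{v}} \leq \sqrt{\norm{U\brk{x}}^2+\varepsilon^2}\,\norm{\Diff_\Gamma U\brk{x}\sqb{v}}\), so that
\begin{equation*}
 \bigl| \Diff \sqrt{\norm{U}^2+\varepsilon^2}\brk{x}\sqb{v} \bigr|
 \leq \norm{\Diff_\Gamma U\brk{x}\sqb{v}}.
\end{equation*}

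Finally, I would pass to the limit \(\varepsilon \to 0\). Since \(\sqrt{\norm{U}^2+\varepsilon^2} \to \norm{U}\) monotonically and in \(L^1_{\mathrm{loc}}\), and the family of weak gradients is uniformly bounded in \(L^\infty\) by \(\norm{\Diff_\Gamma U}\), one deduces by a standard weak compactness and lower semicontinuity argument (testing against \(C^\infty_c\) functions and using dominated convergence on the right-hand side of the definition of the weak derivative) that \(\norm{U} \in W^{1,1}_{\mathrm{loc}}\brk{\Omega}\) with weak derivative satisfying the claimed bound almost everywhere. The only mild subtlety is to recover the pointwise control \(\norm{\Diff \norm{U}\brk{x}\sqb{v}} \leq \norm{\Diff_\Gamma U\brk{x}\sqb{v}}\) for almost every \(v\), but this follows from choosing \(v\) in a countable dense subset of the unit sphere and using linearity of \(\Diff \norm{U}\brk{x}\) in \(v\). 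The only genuinely delicate point is the behaviour on the set \(\set{U = 0}\), where one simply notes that on any such point of weak differentiability of \(\norm{U}\) the derivative must vanish, consistent with the estimate.
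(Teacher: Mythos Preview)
Your proof is correct and follows essentially the same route as the paper: the core is the identity \(\inpr{U}{\Diff U\sqb{v}} = \inpr{U}{\Diff_\Gamma U\sqb{v}}\) from the skew-symmetry of \(\Gamma\brk{x}\sqb{v}\), followed by Cauchy--Schwarz. The only difference is that the paper bypasses your regularisation step by directly quoting the known formula \eqref{eq: weak derivative |U|} for the weak derivative of \(\norm{U}\), which makes the argument shorter but relies on the same underlying computation you carry out explicitly.
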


In the magnetic setting \(\Gamma = \ii A\), the above inequality is known as the \emph{diamagnetic inequality}~\citelist{\cite{Lieb_Loss_2001}*{Thm.\ 7.21}\cite{Balinsky_Evans_Lewis_2015}*{\S 5.3}}. When \(\Gamma = 0\) is trivial, \cref{lemma: diamagnetic inequality} reduces to the well-known inequality \(\norm{\Diff \norm{U}} \leq \norm{\Diff U}\) for vector-valued Sobolev functions (see for example~\cite{Willem_2013}*{Cor.\ 6.1.14}).

\begin{proof}[Proof of \cref{lemma: diamagnetic inequality}]
	Recall that \(\norm{U}\) is weakly differentiable whenever \(U\) is and that for almost all \(x\in \Omega\)
	\begin{equation}
		\label{eq: weak derivative |U|}
		\Diff\norm{U}\brk{x}\sqb{v}= \left\{
		\begin{aligned}
			& \frac{\inpr{U\brk{x} }{ \Diff U\brk{x}\sqb{v}}}{\norm{U\brk{x}}} &&\text{if }U\brk{x} \neq 0,\\
			& 0 &&\text{if }U\brk{x}=0.
		\end{aligned}\right.
	\end{equation}
	We have, since \(\Gamma \brk{x}\sqb{v} \in \mathfrak{o}\brk{F}\),
	\begin{equation}
	\begin{split}
	  \inpr{U\brk{x} }{ \Diff U\brk{x}\sqb{v}}
	  &=\inpr{U \brk{x} }{ \Diff U \brk{x}[v]} + \inpr{U \brk{x} }{ \Gamma\brk{x}[v] U \brk{x}}\\
	  &=\inpr{U\brk{x} }{ \Diff_\Gamma U\brk{x}\sqb{v}},
	\end{split}
	\end{equation}
	and thus by the Cauchy-Schwarz inequality
	\begin{equation}
	  \norm{\inpr{U\brk{x} }{ \Diff U\brk{x}\sqb{v}}}
	  \le \norm{U\brk{x}} \norm{\Diff_\Gamma U\brk{x}\sqb{v}},
	\end{equation}
	and the conclusion follows. 
\end{proof}

A direct consequence of \cref{lemma: diamagnetic inequality} is that if \(U \in W^{1,p}_\Gamma(\Omega,F)\), then \(\norm{U} \in W^{1,p}(\Omega,\R)\) and \(\smash{\Norm{U}_{W^{1,p}_\Gamma(\Omega,F)}} \leq \Norm{ \norm{U} }_{W^{1,p}(\Omega,\R)}\).
In particular, it also implies the Sobolev-type embedding \(W^{1,p}_\Gamma(\Omega,F) \hookrightarrow \smash{ L^{p^\ast}(\Omega,F)}\) if \(n+1>p\) where \(\smash{\frac{1}{p^\ast}} = \frac{1}{p} - \frac{1}{n + 1}\).

\subsection{Parallel transport along paths}
While the covariant derivative has the gauge-covariance property \eqref{eq: D_Gamma gauge cov property}, differences between values at points do not transform well.
Indeed, if \(x\), \(y \in \Omega\) and \(U^\prime = \phi U\) for a gauge transformation \(\phi\), then
\begin{equation*}
	U^\prime\brk{x} - U^\prime\brk{y} = \phi\brk{x} U\brk{x} - \phi\brk{y} U\brk{y}
\end{equation*} 
which cannot be rewritten as \(\phi(\cdot) (U\brk{x}- U\brk{y})\) unless \(\phi\brk{x}=\phi\brk{y}\).
One thus has to work with a \emph{parallel transport} (or \emph{parallel displacement}) with respect to the connection~\citelist{\cite{Kobayashi_Nomizu_1963} \cite{Sontz_bundle} \cite{Wendl_bundles}}. 

When \(\Gamma \in C^0(\Omega, \Lin{\R^{n+1}}{\mathfrak{o}(F)})\), any smooth path \(\gamma\colon[0, 1] \to \Omega\) naturally induces a parallel transport operator \( \Pt^\Gamma_{\gamma} \colon[0, 1] \to O(F)\) which is defined as the solution of the linear differential equation
\begin{equation}\label{def: Pt}
	\Biggl\{\begin{aligned}
		&\bigl(\Pt^\Gamma_\gamma\brk{t}\bigr)^\prime + \Gamma(\gamma\brk{t})[\dot{\gamma}\brk{t}] \Pt^\Gamma_\gamma \brk{t} = 0  &&\text{for \(t \in[0, 1]\),}\\
		&\Pt^\Gamma_\gamma(1) = \id_F.
	\end{aligned}
\end{equation}
The classical theory of linear differential equations ensures that \(\Pt^\Gamma_\gamma\) is well-defined and as smooth as data allows it to be. 

In the particular case where \(F \cong \C\) and \(\Gamma = \ii A\), the parallel transport can be simply computed through a path integral of the vector potential \(A\)
\[
 \Pt^\Gamma_\gamma \brk{t} = \exp \brk[\bigg]{-\ii \int_t^1 A \brk{\gamma} \sqb{\dot{\gamma} \brk{\tau}} \dd \tau}.
\]

\begin{lemma}
	\label{lemma: Pt isometry}
	If \(\Gamma \in C^0(\Omega,\Lin{\R^{n+1}}{\mathfrak{o}(F)})\), \(\gamma \in C^1([0, 1], \Omega)\) and \(t \in[0, 1]\), then
	\(\Pt^\Gamma_{\gamma}\brk{t} \in O \brk{F}\).
\end{lemma}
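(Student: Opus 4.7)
The plan is to show that the function $t \mapsto \Pt^\Gamma_\gamma(t)^\ast \Pt^\Gamma_\gamma(t)$ is constant on $[0,1]$, equal to $\id_F$, from which orthogonality will follow immediately using finite-dimensionality.

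Concretely, I would set $P\brk{t} = \Pt^\Gamma_\gamma\brk{t}$ and $A\brk{t} = \Gamma\brk{\gamma\brk{t}}\sqb{\dot\gamma\brk{t}} \in \mathfrak{o}\brk{F}$, so that by \eqref{def: Pt} we have $P'\brk{t} = -A\brk{t} P\brk{t}$ with $P\brk{1} = \id_F$. Define $Q\brk{t} = P\brk{t}^\ast P\brk{t}$ and compute, using that $A\brk{t}^\ast = -A\brk{t}$ (which is the very definition of taking values in the Lie algebra $\mathfrak{o}\brk{F}$ of the orthogonal group),
\begin{equation*}
  Q'\brk{t}
  = \brk{P'\brk{t}}^\ast P\brk{t} + P\brk{t}^\ast P'\brk{t}
  = P\brk{t}^\ast \brk{-A\brk{t}}^\ast P\brk{t} - P\brk{t}^\ast A\brk{t} P\brk{t}
  = 0.
\end{equation*}
Thus $Q$ is constant on $[0,1]$, and the terminal condition $P\brk{1} = \id_F$ yields $Q\brk{t} = \id_F$ for all $t \in [0,1]$.

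It remains to deduce that $P\brk{t} \in O\brk{F}$, i.e.\ that $P\brk{t}$ is not only an isometry but also invertible. Since $P\brk{t}^\ast P\brk{t} = \id_F$, the map $P\brk{t}$ is injective; as $F$ is finite-dimensional, it is then automatically surjective, hence an element of $O\brk{F}$. (Alternatively, one can simply observe that $P\brk{t}$ is invertible as the solution of a linear ODE, or apply the same argument to $P\brk{t} P\brk{t}^\ast$.)

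I do not anticipate any real obstacle here: the argument is the standard observation that a time-dependent linear flow generated by skew-adjoint operators preserves the inner product, and the skew-adjointness condition is built directly into the hypothesis $\Gamma \brk{x}\sqb{v} \in \mathfrak{o}\brk{F}$. The only minor care is to use the terminal rather than initial condition from \eqref{def: Pt} when evaluating the constant.
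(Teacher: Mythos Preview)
Your proof is correct and takes essentially the same approach as the paper: both differentiate a quantity encoding the isometry property (you use $P(t)^\ast P(t)$, the paper uses $\norm{\Pt^\Gamma_\gamma(t) v}^2$ for a fixed $v \in F$) and observe that the derivative vanishes by skew-adjointness of $\Gamma(\gamma(t))[\dot\gamma(t)] \in \mathfrak{o}(F)$. The operator-level formulation you give and the vector-level one in the paper are equivalent restatements of the same computation.
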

\begin{proof}
Given \(v \in F\) and defining \(f (t) = \norm{\Pt^\Gamma_{\gamma} \brk{t}}^2\), 
we have 
\begin{equation}
\begin{split}
f' \brk{t} &= 2 \inpr{\Pt^\Gamma_{\gamma} \brk{t} }{ \brk{\Pt^\Gamma_{\gamma}}'\brk{t}}\\
&= 2\inpr{\Pt^\Gamma_{\gamma} \brk{t}}{ -\Gamma(\gamma\brk{t}}[\dot{\gamma} \brk{t}] \Pt^\Gamma_\gamma \brk{t}) = 0,
\end{split}
\end{equation}
and the conclusion follows.
\end{proof}

We have the following gauge-covariance property for parallel transport.

\begin{lemma}
	\label{lemma: Pt gauge-invariance}
	Let \(\gamma \colon[0, 1] \to \Omega\) be absolutely continuous and let the connection form \(\Gamma \in C^0(\Omega,\Lin{\R^{n+1}}{\mathfrak{o}(F)})\). If \(\phi \in C^1\brk{\Omega, O(F)}\) is a gauge transformation and if \(\Gamma^\prime\) is given by \eqref{eq: Gamma^prime}, then for all \(t\in[0, 1]\) one has
	\begin{equation*}
		\Pt^{\Gamma^\prime}_\gamma(t) = \phi(\gamma(t))\Pt^\Gamma_\gamma(t)\phi(\gamma(1))^{-1}
	\end{equation*}
	where \(\Pt^\Gamma_\gamma\) and \(\Pt^{\Gamma^\prime}_\gamma\) are defined by \eqref{def: Pt} with their respective connection forms.
\end{lemma}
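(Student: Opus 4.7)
My plan is to verify the identity by checking that both sides satisfy the same linear terminal-value problem, and then invoke uniqueness. Concretely, I would define
\[
  Q \brk{t} = \phi \brk{\gamma \brk{t}} \Pt^\Gamma_\gamma \brk{t} \phi \brk{\gamma \brk{1}}^{-1}
\]
and show that \(Q\) satisfies the defining equation \eqref{def: Pt} for \(\Pt^{\Gamma'}_\gamma\), namely \(Q' \brk{t} + \Gamma' \brk{\gamma \brk{t}} \sqb{\dot \gamma \brk{t}} Q \brk{t} = 0\) for almost every \(t\), together with \(Q \brk{1} = \id_F\).

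The terminal condition is immediate since \(\Pt^\Gamma_\gamma \brk{1} = \id_F\) implies \(Q \brk{1} = \phi \brk{\gamma \brk{1}} \phi \brk{\gamma \brk{1}}^{-1} = \id_F\). For the differential equation, I would differentiate \(Q\) by the product rule:
\[
  Q' \brk{t} = \Diff \phi \brk{\gamma \brk{t}} \sqb{\dot \gamma \brk{t}} \Pt^\Gamma_\gamma \brk{t} \phi \brk{\gamma \brk{1}}^{-1} + \phi \brk{\gamma \brk{t}} \brk{\Pt^\Gamma_\gamma}' \brk{t} \phi \brk{\gamma \brk{1}}^{-1},
\]
and then substitute \(\brk{\Pt^\Gamma_\gamma}' \brk{t} = -\Gamma \brk{\gamma \brk{t}} \sqb{\dot \gamma \brk{t}} \Pt^\Gamma_\gamma \brk{t}\) from \eqref{def: Pt}. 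The resulting expression must be compared to \(-\Gamma' \brk{\gamma \brk{t}} \sqb{\dot \gamma \brk{t}} Q \brk{t}\); using the formula \(\Gamma' = -\brk{\Diff \phi} \phi^{-1} + \phi \Gamma \phi^{-1}\) from \eqref{eq: Gamma^prime}, one finds
\[
  -\Gamma' \brk{\gamma \brk{t}} \sqb{\dot \gamma \brk{t}} \phi \brk{\gamma \brk{t}} = \Diff \phi \brk{\gamma \brk{t}} \sqb{\dot \gamma \brk{t}} - \phi \brk{\gamma \brk{t}} \Gamma \brk{\gamma \brk{t}} \sqb{\dot \gamma \brk{t}},
\]
after which the two sides match termwise. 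Uniqueness of solutions to the linear Carathéodory system (valid since \(\gamma\) is absolutely continuous and \(\Gamma\), \(\phi\), \(\Diff \phi\) are continuous along \(\gamma\)) then identifies \(Q\) with \(\Pt^{\Gamma'}_\gamma\).

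The only genuinely delicate point is the algebraic bookkeeping in the comparison step, where one must be careful that \(\Gamma'\) acts on the product \(\phi \brk{\gamma \brk{t}} \Pt^\Gamma_\gamma \brk{t} \phi \brk{\gamma \brk{1}}^{-1}\) rather than on just one factor; the cancellation \(\phi^{-1} \phi = \id_F\) in the middle is what makes the identity work. No essential obstacle is expected: once the ansatz is written down, the verification is a direct computation fully parallel to the analogous check performed for \(\Diff_{\Gamma'} U'\) in \eqref{eq: computation for Gamma^prime}--\eqref{eq: D_Gamma gauge cov property}.
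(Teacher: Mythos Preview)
Your proposal is correct and follows exactly the same approach as the paper: the paper's proof is a single sentence stating that one verifies the right-hand side satisfies the same final value problem \eqref{def: Pt} as \(\Pt^{\Gamma'}_\gamma\) and concludes by uniqueness. You have simply written out the verification in detail.
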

\begin{proof}
	One verifies that \( t\in[0, 1] \mapsto \phi(\gamma(t))\Pt^\Gamma_\gamma(t)\phi(\gamma(1))^{-1}\) satisfies the same final value problem \eqref{def: Pt} as \(\Pt^{\Gamma^\prime}_\gamma\), and the conclusion follows by uniqueness of the solution.
\end{proof}

\Cref{lemma: Pt gauge-invariance} implies that if \(U^\prime = \phi U\), then for any absolutely continuous path \(\gamma \colon[0, 1] \to \Omega\) joining \(x=\gamma(0)\) and \(y=\gamma(1)\), we have
\begin{equation}
	\label{eq: Pt U - U}
	\begin{split}
	U^\prime\brk{x} - \Pt^{\Gamma^\prime}_\gamma(0)U^\prime\brk{y}
	& = \phi\brk{x}U\brk{x} - \phi\brk{x}\Pt^\Gamma_\gamma(0)\phi\brk{y}^{-1}\phi\brk{y} U\brk{y} \\
	& = \phi\brk{x} (U\brk{x} - \Pt^\Gamma_\gamma(0)U\brk{y}).
	\end{split}
\end{equation}

With this notation at hand,  we state a gauge-covariant equivalent of the fundamental theorem of calculus, namely
\begin{lemma}
	\label{FTC}
	For every \(\gamma \in C^1([0, 1], \Omega)\) and every \(\Gamma \in C^0(\Omega,\Lin{\R^{n+1}}{\mathfrak{o}(F)})\), we have 
	\begin{equation}
		U(\gamma(1)) = \Pt^\Gamma_\gamma\brk{0}^{-1} U(\gamma(0)) + \int_0^1 \Pt^\Gamma_\gamma \brk{t}^{-1}\Diff_\Gamma
		U(\gamma(t))[\dot{\gamma} \brk{t}]\dd t.
	\end{equation}
\end{lemma}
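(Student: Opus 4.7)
The plan is to introduce the auxiliary function
\[
  V(t) = \Pt^\Gamma_\gamma \brk{t}^{-1} U(\gamma \brk{t})
\]
for \(t \in [0, 1]\), differentiate it, and recognise the resulting expression as the parallel-transported covariant derivative. Then integration from \(0\) to \(1\) and the endpoint condition \(\Pt^\Gamma_\gamma \brk{1} = \id_F\) will yield the claim immediately.

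First I would compute \(\brk{\Pt^\Gamma_\gamma \brk{t}^{-1}}'\). Differentiating the identity \(\Pt^\Gamma_\gamma \brk{t}^{-1}\Pt^\Gamma_\gamma \brk{t} = \id_F\) and substituting the defining equation \eqref{def: Pt} for \(\Pt^\Gamma_\gamma\) gives
\[
  \brk{\Pt^\Gamma_\gamma \brk{t}^{-1}}' = - \Pt^\Gamma_\gamma \brk{t}^{-1} \brk{\Pt^\Gamma_\gamma}'\brk{t} \Pt^\Gamma_\gamma \brk{t}^{-1} = \Pt^\Gamma_\gamma \brk{t}^{-1} \Gamma(\gamma \brk{t})[\dot{\gamma}\brk{t}].
\]
Applying the product rule to \(V\), using this identity together with the chain rule \(\brk{U \circ \gamma}' \brk{t} = \Diff U (\gamma \brk{t}) [\dot{\gamma}\brk{t}]\), and factoring out \(\Pt^\Gamma_\gamma \brk{t}^{-1}\), one arrives at
\[
  V' \brk{t}
  = \Pt^\Gamma_\gamma \brk{t}^{-1}\brk[\big]{\Diff U(\gamma \brk{t})[\dot{\gamma}\brk{t}] + \Gamma(\gamma \brk{t})[\dot{\gamma}\brk{t}] U(\gamma \brk{t})}
  = \Pt^\Gamma_\gamma \brk{t}^{-1}\Diff_\Gamma U(\gamma \brk{t})[\dot{\gamma}\brk{t}],
\]
where the last equality is just the definition \eqref{eq_def_DGamma} of the covariant derivative.

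Finally I would integrate this equation on \([0, 1]\): the classical fundamental theorem of calculus gives
\[
  V \brk{1} - V \brk{0} = \int_0^1 \Pt^\Gamma_\gamma \brk{t}^{-1}\Diff_\Gamma U(\gamma \brk{t})[\dot{\gamma}\brk{t}] \dd t,
\]
and since \(\Pt^\Gamma_\gamma \brk{1} = \id_F\) implies \(V \brk{1} = U(\gamma \brk{1})\), while \(V \brk{0} = \Pt^\Gamma_\gamma \brk{0}^{-1} U(\gamma \brk{0})\), rearranging delivers the desired identity. There is no real obstacle here; the only nonroutine point is the computation of \(\brk{\Pt^\Gamma_\gamma \brk{t}^{-1}}'\), which is the standard trick of differentiating an inverse and invoking the defining ODE.
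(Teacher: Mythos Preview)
Your proof is correct and follows essentially the same approach as the paper: both differentiate \(t \mapsto \Pt^\Gamma_\gamma(t)^{-1} U(\gamma(t))\), identify the derivative as \(\Pt^\Gamma_\gamma(t)^{-1}\Diff_\Gamma U(\gamma(t))[\dot\gamma(t)]\), and then apply the classical fundamental theorem of calculus. Your version is slightly more explicit about the derivative of the inverse, but otherwise the arguments coincide.
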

\Cref{FTC} is the counterpart of the magnetic fundamental theorem of calculus~\cite{Nguyen_VanSchaftingen_2020}*{(2.6)}.

\begin{proof}[Proof of \cref{FTC}]
	By the chain rule and the definition of the parallel transport \eqref{def: Pt}, we have, for all \(t\in[0,1]\)
	\begin{equation}\label{with chaine rule}
	\begin{split}
		\frac{\dd}{\dd t}\brk[\big]{\Pt^\Gamma_\gamma\brk{t}^{-1} U(\gamma(t))}
		&= \Pt^\Gamma_\gamma\brk{t}^{-1} \Gamma(\gamma(t))[\dot{\gamma}\brk{t}]U(\gamma(t)) + \Pt^\Gamma_\gamma\brk{t}^{-1} \Diff U(\gamma)[\dot{\gamma}\brk{t}] \\
		&= \Pt^\Gamma_\gamma\brk{t}^{-1} \Diff_\Gamma U(\gamma(t))[\dot{\gamma}\brk{t}].
	\end{split}
	\end{equation}
	and the conclusion follows from the fundamental theorem of calculus.
\end{proof}

Based on the technique presented by Chanillo and the first author in~\cite{Chanillo_VanSchaftingen_2020}, we obtain a geometric formula for the derivative with respect to a parameter, namely   
\begin{lemma}
	\label{lemma: formula for derivative of Pt wrt parameter}
	Let \(\Gamma\in C^1(\Omega,\Lin{\R^{n+1}}{\mathfrak o(F)})\). Let \(H \in C^1([0,1]\times J, \Omega)\) where \(J\subset \R\) is open and set \(\gamma_s = H(\cdot , s) \in C^1([0,1], \Omega)\) for \(s \in J\). Then \((t,s) \in [0,1] \times J \mapsto \Pt^\Gamma_{\gamma_s}(t) \in C^1([0,1]\times J, O(F))\) and, for all \(t\in [0,1]\) and \(s \in J\),
	\begin{equation}
		\label{eq: lemme: formula for derivative of Pt wrt parameter}
		\begin{split}
			&\tfrac{\dd}{\dd s}\Pt^\Gamma_{\gamma_s}(0)
			+ \Gamma(\gamma_s(0))[\tfrac{\dd}{\dd s} \gamma_s(0)]\Pt^\Gamma_{\gamma_s}(0)
			- \Pt^\Gamma_{\gamma_s}(0) \Gamma(\gamma_s(1))[\tfrac{\dd}{\dd s}\gamma_s(1)] \\
			&\qquad = \int_0^1 \Pt^\Gamma_{\gamma_s}(0) \Pt^\Gamma_{\gamma_s}(t)^{-1} \mathscr{K}_\Gamma(\gamma_s(t))[\tfrac{\dd}{\dd s} \gamma_s(t), \dot{\gamma}_s(t)] \Pt^\Gamma_{\gamma_s}(t) \dd t.
		\end{split}
	\end{equation}
and 
\begin{equation}
\label{eq_CaaPhah3kae2shei7oYeiLah}
\begin{split}
&\norm[\Big]{\tfrac{\dd}{\dd s}\Pt^\Gamma_{\gamma_s}(0) 
	+ \Gamma(\gamma_s(0))[\tfrac{\dd}{\dd s}\gamma_s(0)] \Pt^\Gamma_{\gamma_s}(0)
		-\Pt^\Gamma_{\gamma_s}(0)\Gamma(\gamma_s(1))[\tfrac{\dd}{\dd s} \gamma_s(1)]}\\
&\qquad \le \int_0^1 \norm{\mathscr{K}_\Gamma(\gamma_s(t))[\tfrac{\dd}{\dd s} \gamma_s(t), \dot{\gamma}_s(t)]} \dd t.
		\end{split}
\end{equation}
\end{lemma}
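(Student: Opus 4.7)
Write $P(t,s) := \Pt^\Gamma_{\gamma_s}(t)$. The plan is to derive, and then solve by variation of constants, a first-order linear ODE (in $t$, with parameter $s$) for the quantity
\[
 Z(t,s) := \tfrac{\dd}{\dd s}P(t,s) + \Gamma(\gamma_s(t))[\tfrac{\dd}{\dd s}\gamma_s(t)]\,P(t,s),
\]
which is precisely the left-hand side of \eqref{eq: lemme: formula for derivative of Pt wrt parameter} when evaluated at $t=0$. The $C^1$ regularity of $P$ in $(t,s)$ is a direct consequence of the classical Cauchy--Lipschitz theorem with parameters applied to \eqref{def: Pt}, since $\Gamma$ is $C^1$ and $(t,s) \mapsto (\gamma_s(t),\dot{\gamma}_s(t))$ is $C^0$ with $C^1$ dependence on $s$.

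The key computation is to show that $Z$ satisfies the inhomogeneous linear ODE
\[
 \partial_t Z(t,s) + \Gamma(\gamma_s(t))[\dot{\gamma}_s(t)]\,Z(t,s) = -\mathscr{K}_\Gamma(\gamma_s(t))[\tfrac{\dd}{\dd s}\gamma_s(t),\dot{\gamma}_s(t)]\,P(t,s).
\]
To obtain this I would differentiate the defining ODE $\partial_t P + \Gamma(\gamma_s)[\dot{\gamma}_s]P = 0$ with respect to $s$, and use the commutation of partial derivatives $\partial_t \partial_s \gamma_s = \partial_s \partial_t \gamma_s$ to recognise that
\[
 \partial_s\bigl(\Gamma(\gamma_s)[\dot{\gamma}_s]\bigr) - \partial_t\bigl(\Gamma(\gamma_s)[\tfrac{\dd}{\dd s}\gamma_s]\bigr) = \dd\Gamma(\gamma_s)[\tfrac{\dd}{\dd s}\gamma_s,\dot{\gamma}_s].
\]
Combining this with the commutator term $[\Gamma[\tfrac{\dd}{\dd s}\gamma_s],\Gamma[\dot{\gamma}_s]]P$ produced when rearranging the expression into the form $\partial_t Z + \Gamma[\dot{\gamma}_s]Z$, the curvature $\mathscr{K}_\Gamma = \dd\Gamma + \Gamma \wedge \Gamma$ emerges on the right-hand side. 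This step, where the signs and the wedge/bracket convention must be aligned carefully, is where I expect the only real obstacle; the rest of the argument is bookkeeping.

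Once the ODE for $Z$ is established, I would apply variation of constants: writing $Z(t,s) = P(t,s)\,C(t,s)$ and using that $P(\cdot,s)$ solves the corresponding homogeneous equation, the ODE collapses to $\partial_t C = -P^{-1}\,\mathscr{K}_\Gamma[\tfrac{\dd}{\dd s}\gamma_s,\dot{\gamma}_s]\,P$. The terminal condition $P(1,s) = \id_F$ gives $\tfrac{\dd}{\dd s}P(1,s) = 0$, hence $C(1,s) = Z(1,s) = \Gamma(\gamma_s(1))[\tfrac{\dd}{\dd s}\gamma_s(1)]$. Integrating $\partial_t C$ from $t=1$ down to $t=0$ and multiplying on the left by $P(0,s)$ produces exactly \eqref{eq: lemme: formula for derivative of Pt wrt parameter}.

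The estimate \eqref{eq_CaaPhah3kae2shei7oYeiLah} is then immediate: by \cref{lemma: Pt isometry} each $P(0,s)$ and $P(t,s)$ lies in $O(F)$, and the Hilbert--Schmidt norm is bi-invariant under composition with orthogonal operators, so
\(\norm{P(0,s)\,P(t,s)^{-1}\,\mathscr{K}_\Gamma(\gamma_s(t))[\tfrac{\dd}{\dd s}\gamma_s(t),\dot{\gamma}_s(t)]\,P(t,s)}\) equals \(\norm{\mathscr{K}_\Gamma(\gamma_s(t))[\tfrac{\dd}{\dd s}\gamma_s(t),\dot{\gamma}_s(t)]}\), and the bound follows by the triangle inequality under the integral.
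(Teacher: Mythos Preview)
Your proof is correct and follows essentially the same strategy as the paper: differentiate the defining ODE with respect to the parameter \(s\) and solve the resulting linear equation by variation of constants. The one organisational difference is that you work directly with the ``covariant'' quantity \(Z=\partial_s P+\Gamma[\partial_s\gamma_s]P\), so the curvature appears immediately in the inhomogeneous term; the paper instead solves for \(\partial_s P\) and then performs an integration by parts (their \eqref{eq: integration by parts}) to assemble \(\mathscr{K}_\Gamma\) afterwards---your packaging absorbs that integration by parts into the algebra at the ODE level, which is slightly cleaner but mathematically equivalent.
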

The term \(\mathscr{K}_\Gamma\) in \cref{lemma: formula for derivative of Pt wrt parameter} is the \emph{curvature} of the covariant derivative \(\Diff_\Gamma\) and is defined for all \(x\in \Omega\), \(v\), \(w\in \R^{n+1}\) by~\citelist{\cite{Kobayashi_Nomizu_1963}\cite{Chanillo_VanSchaftingen_2020}\cite{Wendl_bundles}\cite{Sontz_bundle}}
\begin{equation}
	\label{def: curvature}
	\mathscr{K}_\Gamma\brk{x}\sqb{v, w} = \dd \Gamma\brk{x}\sqb{v, w} + \Gamma \wedge \Gamma\brk{x}\sqb{v, w}
\end{equation}
where \(\dd \Gamma\) is the \emph{exterior derivative} of \(\Gamma\) (seen as a \(\mathfrak{o}(F)\) valued form \(\Gamma \in \bigwedge^1(\Omega,\mathfrak{o}(F))\)) and \(\Gamma \wedge \Gamma\) is the \emph{wedge product} of \(\mathfrak{o}(F)\) valued forms. In other words,
\begin{equation}
\label{eq_booxiNie6ni5luu2arah3Shu}
	\mathscr{K}_\Gamma\brk{x}\sqb{v, w} 
	= \Diff\Gamma\brk{x}\sqb{v, w} - \Diff\Gamma\brk{x}[w,v]  + \Gamma\brk{x}[v]\Gamma\brk{x}[w] - \Gamma\brk{x}[w]\Gamma\brk{x}[v],
\end{equation}
where 
\[
	\Diff\Gamma\brk{x}\sqb{v, w} = \left.\frac{\dd}{\dd t}\right\vert_{t=0} \Gamma(x+tv)[w]
\]
One can also write 
\begin{equation}
\label{eq_kifaelaer1feiThohV1aesh2}
 \mathscr{K}_\Gamma\brk{x}\sqb{v, w}
 = \Diff_\Gamma \brk{\Gamma\sqb{w}}\brk{x}\sqb{v} - \Diff_\Gamma \brk{\Gamma\sqb{v}}\brk{x}\sqb{w},
\end{equation}
which shows that the curvature is related to the commutation of derivatives.
Indeed, one has 
\[
 \Diff_{\Gamma} \brk{\Diff_{\Gamma} U\sqb{w}} \sqb{v}
  = \Diff^2 U \sqb{v, w} + \Gamma\sqb{v} \Diff U \sqb{w}
  + \Gamma\sqb{v}\Gamma\sqb{w} U + \Diff \Gamma \sqb{v,w} U
  + \Gamma\sqb{w} \Diff U \sqb{v},
\]
and thus 
\begin{equation}
\label{eq_ohvi0ohloo2weiBu8yoyahhe}
 \Diff_{\Gamma} \brk{\Diff_{\Gamma} U\sqb{w}} \sqb{v}
 -  \Diff_{\Gamma} \brk{\Diff_{\Gamma} U\sqb{v}} \sqb{w}
 = \mathscr{K}_\Gamma \sqb{v, w} U.
\end{equation}

If \(\Gamma'\) is given by \eqref{eq: Gamma^prime},
then, by \eqref{eq: D_Gamma gauge cov property} and \eqref{eq_ohvi0ohloo2weiBu8yoyahhe}
\begin{equation}
\label{eq_curvature_gauge}
\begin{split}
\mathscr{K}_{\Gamma'}\sqb{v, w}
&=  \Diff_{\Gamma'} \brk{\phi \Diff_{\Gamma} \brk{\phi^{-1}} \sqb{w}}\sqb{v} - \Diff_{\Gamma'} \brk{\phi \Diff_{\Gamma} \brk{\phi^{-1}}\sqb{v}}\sqb{w}\\
&= \phi \brk{\Diff_{\Gamma} \brk{\Diff_{\Gamma} \brk{\phi^{-1}} \sqb{w}}\sqb{v} - \Diff_{\Gamma} \brk{\Diff_{\Gamma} \brk{\phi^{-1}}\sqb{v}}\sqb{w}}\\
&= \phi \mathscr{K}_{\Gamma}\sqb{v, w} \phi^{-1},
\end{split}
\end{equation}
that is, \(\mathscr{K}_{\Gamma'} = \phi \mathscr{K}_{\Gamma}\phi^{-1}\).

In the case of abelian gauge theories, such as for magnetic Sobolev spaces, the wedge product \(\Gamma \wedge \Gamma = 0\) vanishes, so that the curvature \eqref{def: curvature} is given by \(\mathscr{K}_\Gamma = \dd \Gamma\). For magnetic Sobolev spaces, where \(\Gamma = \ii A\), one has \(\mathscr{K}_{\ii A} = \ii \dd A\) so that the curvature is the underlying magnetic field \(B = \dd A\) up to a constant factor \(\ii\)~\citelist{\cite{Nguyen_VanSchaftingen_2020}\cite{Lieb_Loss_2001}\cite{Sontz_bundle}}.

The identity \eqref{eq: lemme: formula for derivative of Pt wrt parameter} becomes the formula for the differentiation of line integrals connected to the Reynolds transport formula and Kelvin’s circulation theorem (see for example \cite{Flanders_1973}),
\begin{multline}
 \tfrac{\dd }{\dd s}
 \int_0^1 A \brk{\gamma_s \brk{t}}\sqb{\dot{\gamma}_s \brk{t}} \dd t
  - A\brk{\gamma_s \brk{0}}\sqb{\tfrac{\dd }{\dd s} \gamma_s\brk{0}} + A\brk{\gamma_s \brk{1}}\sqb{\tfrac{\dd }{\dd s} \gamma_s \brk{1}}\\
  =
  \int_0^1 \dd A \brk{\gamma_s \brk{t}} \sqb{\dot{\gamma}_s \brk{t}, \tfrac{\dd }{\dd s} \gamma_s \brk{t}} \dd t.
\end{multline}

\begin{proof}[Proof of \cref{lemma: formula for derivative of Pt wrt parameter}]
	Since \(\Gamma \in C^1(\Omega, \Lin{\R^{n+1}}{\mathfrak{o}(F)})\), the theory of ordinary differential equations with parameters ensures that \((t,s) \mapsto \Pt^\Gamma_{\gamma_s}(t)\) is \(C^1\) and satisfies~\cite{Kong_2014}*{Thm.\ 1.5.3}
	\begin{equation}
		\label{eq: d/dalpha Pt_gamma_alpha}
		\left\{\begin{aligned}
			 &(\tfrac{\dd}{\dd s} \Pt^\Gamma_{\gamma_s})^\prime(t) + \Gamma(\gamma_s(t))[\dot{\gamma}_s(t)]\tfrac{\dd}{\dd s}\Pt^\Gamma_{\gamma_s}(t) \\
			& \hspace{3em} = - ( \Diff \Gamma(\gamma_s(t))[ \tfrac{\dd}{\dd s} \gamma_s(t), \dot{\gamma}_s(t) ] + \Gamma(\gamma_s(t))[\tfrac{\dd}{\dd s}\dot{\gamma}_s(t)]) \Pt^\Gamma_{\gamma_s}(t), \\
			&\tfrac{\dd}{\dd s} \Pt^\Gamma_{\gamma_s}\brk{1} = 0.
		\end{aligned}\right.
	\end{equation}
	The final value problem \eqref{eq: d/dalpha Pt_gamma_alpha} is a non-homogeneous linear differential equation and, since \(\Pt^\Gamma_{\gamma_s}\) is an operator solution of the associated homogeneous linear differential equation, applying the variation of parameters formula for such equations~\cite{Kong_2014}*{Thm.\ 2.3.1} yields
	\begin{multline}
		\label{eq: sol via variation of parameters}
		\tfrac{\dd}{\dd s} \Pt^\Gamma_{\gamma_s}(0) \\
		\qquad =
		\int_0^1\Pt^\Gamma_{\gamma_s}(0) \Pt^\Gamma_{\gamma_s}(t)^{-1}
		\brk[\big]{ \Diff \Gamma(\gamma_s(t))[\tfrac{\dd}{\dd s} \gamma_s(t), \dot{\gamma}_s(t) ] + \Gamma(\gamma_s(t))[\tfrac{\dd}{\dd s} \dot{\gamma}_s(t)]} \Pt^\Gamma_{\gamma_s}(t) \dd t.		\end{multline}
	Since \((\Pt^\Gamma_{\gamma_s}(t)^{-1})^\prime = \Pt^\Gamma_{\gamma_s}(t)^{-1}\Gamma(\gamma_s(t))[\dot{\gamma}_s(t)]\), an integration by parts yields
	\begin{equation}
		\label{eq: integration by parts}
		\begin{split}
		&\int_0^1 \Pt^\Gamma_{\gamma_s}\brk{0} \Pt^\Gamma_{\gamma_s}(t)^{-1} \Gamma(\gamma_s(t))[\tfrac{\dd}{\dd s} \dot{\gamma}_s(t)] \Pt^\Gamma_{\gamma_s}(t)\dd t \\
		&\qquad 	= \Pt^\Gamma_{\gamma_s}\brk{0}  \Gamma(\gamma_s(1))[\tfrac{\dd}{\dd s}\gamma_s(1)] -  \Gamma(\gamma_s(0))[\tfrac{\dd}{\dd s} \gamma_s(0)] \Pt^\Gamma_{\gamma_s}(0)\\
		&\qquad \qquad + \int_0^1 \Pt^\Gamma_{\gamma_s}(0) \Pt^\Gamma_{\gamma_s}(t)^{-1}\Gamma(\gamma_s(t))[\dot{\gamma}_s(t)] \Gamma(\gamma_s(t))[\tfrac{\dd}{\dd s}\gamma_s(t)]\Pt^\Gamma_{\gamma_s}(t) \dd t \\
		&\qquad \qquad - \int_0^1 \Pt^\Gamma_{\gamma_s}(0) \Pt^\Gamma_{\gamma_s}(t)^{-1} \Diff\Gamma(\gamma_s(t))[\dot{\gamma}_s(t), \tfrac{\dd}{\dd s}\gamma_s(t)] \Pt^\Gamma_{\gamma_s}(t)\dd t \\
		&\qquad \qquad
			- \int_0^1 \Pt^\Gamma_{\gamma_s}(0)\Pt^\Gamma_{\gamma_s}(t)^{-1} \Gamma(\gamma_s(t))[\tfrac{\dd}{\dd s}\gamma_s(t)] \Gamma(\gamma_s)[\dot{\gamma}_s(t)]\Pt^\Gamma_{\gamma_s}(t) \dd t.
		\end{split}
	\end{equation}
	Injecting \eqref{eq: integration by parts} into \eqref{eq: sol via variation of parameters}, we reach the conclusion \eqref{eq: lemme: formula for derivative of Pt wrt parameter} by the definition of curvature \eqref{eq_booxiNie6ni5luu2arah3Shu}.
\end{proof}

In \cref{lemma: formula for derivative of Pt wrt parameter}, the proof of the identity \eqref{eq: lemme: formula for derivative of Pt wrt parameter} remains valid for a general connection \(\Gamma \in C^1 \brk{\Omega, \Lin {\R^{n+1}}{ \mathfrak{gl} \brk{F}}}\) and the isometric character is only used to get \eqref{eq_CaaPhah3kae2shei7oYeiLah}.

\subsection{Parallel transport along segments}

Assuming now that \(\Omega \subseteq \R^{n+1}\) is convex, given \(x\), \(y\in \Omega\), we can consider the path 
\begin{equation}
	\label{eq: def gammaxy}
	\gamma_{x,y} \colon [0,1] \to \Omega: t \mapsto (1-t)x + ty
\end{equation}
and define \(R^{\Gamma} \colon \Omega \times \Omega \to O \brk{F}\) by 
\begin{equation}
	\label{eq: def R}
	R^\Gamma\brk{x, y} = \Pt^\Gamma_{\gamma_{x,y}}(0) \in O(F),
\end{equation}
the parallel transport \emph{from} \(y\) \emph{to} \(x\) along the affine path \(\gamma_{x,y}\). 
In the particular case where \(F \cong \C\) and \(\Gamma = \ii A\), we have
\begin{align}
 R^{\Gamma} \brk{x, y} &= \e^{-\ii \mathcal{I}^A \brk{x, y}}&
 &\text{where }&
 \mathcal{I}^A \brk{x, y}
 &= \int_0^1 A \brk{\brk{1 -t} x + t y}\sqb{x - y} \dd t.
\end{align}

It follows from \cref{lemma: Pt gauge-invariance} that 
if \(\Gamma'\) is given by \eqref{eq: Gamma^prime}, then 
\begin{equation}
\label{eq_ooM3isahgu8ahghoh8ienizu}
 R^{\Gamma'} \brk{x, y}\, \phi \brk{y} =
 \phi \brk{x}\, R^{\Gamma} \brk{x, y}.
\end{equation}
By \cref{FTC}, we have 
\begin{equation}
	U\brk{y} - R^\Gamma\brk{x, y}^{-1}U\brk{x} = \int_0^1 \sqb{R^\Gamma\brk{x, \brk{1 - t} x + t y}}^{-1}\Diff_\Gamma
	U(\brk{1 -t} x + t y)[y-x]\dd t,
\end{equation}
and thus
\begin{equation}
  \norm{U\brk{x} - R^\Gamma\brk{x, y}U\brk{y}}
  \le
  \int_0^1 \norm{\Diff_\Gamma
	U(\brk{1 -t} x + t y)[y-x]}\dd t,
\end{equation}
Moreover,
\begin{equation}
	\label{eq: Rxy = Ryx-1}
	R^\Gamma\brk{x, x} = \id_F 
\end{equation}
and, if the points \(x, y, z\) are \emph{colinear}, 
\begin{equation}
	\label{eq: R=Pt Pt-1}
   R^\Gamma\brk{x, y}R^\Gamma\brk{y, z} R^\Gamma\brk{z, x}=\id_F.
\end{equation}

If \(I\subseteq \R\) is an interval and if \(s \in I \mapsto \brk{x_s, y_s} \in \Omega\) is differentiable, it follows from \cref{lemma: formula for derivative of Pt wrt parameter} applied to \(\gamma_s = \gamma_{x_s, y_s}\) that
\begin{equation}
\label{eq: d/ds R(xs,ys)}	
\begin{split}
  &\frac{\dd}{\dd s}
  R^\Gamma \brk{x_s, y_s} + \Gamma(x_s)\sqb[\big]{\tfrac{\dd x_s}{\dd s}} R^\Gamma \brk{x_s, y_s} -
  R^\Gamma \brk{x_s, y_s} \Gamma(y_s)\sqb[\big]{\tfrac{\dd y_s}{\dd s}}\\
  & \qquad = \int_0^1 R^\Gamma \brk{x_s, \brk{1-t}x_s + t y_s} \mathscr{K}_\Gamma(\brk{1-t}x_s + t y_s)[\brk{1-t}\tfrac{\dd x_s}{\dd s} + t \tfrac{\dd y_s}{\dd s},  y_s- x_s]\\
  &\qquad \qquad \qquad \qquad R^\Gamma \brk{\brk{1-t}x_s + t y_s, y_s}\dd \tau,
\end{split}
\end{equation}

and thus 
\begin{equation}
\label{eq_uyooGuezaik5icei3UCaeh3i}
\begin{split}
  &\norm[\Big]{\frac{\dd}{\dd s}
  R^\Gamma \brk{x_s, y_s} + \Gamma(x_s)\sqb[\big]{\tfrac{\dd x_s}{\dd s}} R^\Gamma \brk{x_s, y_s} -
  R^\Gamma \brk{x_s, y_s} \Gamma(y_s)\sqb[\big]{\tfrac{\dd y_s}{\dd s}}}\\
 &\qquad \le \int_{0}^{1} \norm{\mathscr{K}_\Gamma(\brk{1-t}x_s + t y_s)[\brk{1-t}\tfrac{\dd x_s}{\dd s} + t \tfrac{\dd y_s}{\dd s},  y_s - x_s]} \dd t.
\end{split}
\end{equation}

\subsection{Estimates of holonomy}
When \(x, y, z\) are not colinear, there is no reason for \eqref{eq: R=Pt Pt-1} to hold.
It corresponds to the  \emph{holonomy} at \(x\) along a the triangle \(\Delta_{x,y,z}\) with vertices \(x\), \(y\), \(z\)~\cite{Kobayashi_Nomizu_1963}.
The next proposition controls how \eqref{eq: R=Pt Pt-1} fails, providing a piecewise affine counterpart to the estimate in the smooth case \cite{Chanillo_VanSchaftingen_2020}.

\begin{proposition}\label{estimate holonomy triangle}
    Let \(\Gamma \in  C^1( \Omega, \Lin{\R^{n+1}}{\mathfrak{o}(F)})\). 
    If \(\Bar{\Delta}_{x, y, z} \subseteq \Omega\), then 
    \begin{equation*}
        \norm{\id_F - R^\Gamma \brk{x, y} R^\Gamma \brk{y, z} R^\Gamma \brk{z, x}} \leq  \Norm{\mathscr K_\Gamma}_{L^\infty \brk{\Delta_{x, y, z}}} \norm{\Delta_{x,y,z}}.
    \end{equation*}
\end{proposition}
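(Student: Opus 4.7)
The plan is to interpolate between the trivial identity and the triangular holonomy by sliding the third vertex $z$ along the edge $\overline{yz}$ toward $y$. For $s \in [0, 1]$, set $z_s = (1 - s) y + s z$ and
\[\psi(s) = R^\Gamma(x, y)\, R^\Gamma(y, z_s)\, R^\Gamma(z_s, x).\]
Since the points $x$, $y$, $z_0 = y$ are collinear, \eqref{eq: Rxy = Ryx-1} and \eqref{eq: R=Pt Pt-1} give $\psi(0) = R^\Gamma(x, y) R^\Gamma(y, x) = \id_F$, while $\psi(1)$ is exactly the holonomy one wishes to estimate. It is thus enough to control $\int_0^1 \norm{\psi'(s)} \dd s$.

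For the derivative, I would apply the parameter-differentiation formula \eqref{eq: d/ds R(xs,ys)} to each of the two varying factors: to $R^\Gamma(y, z_s)$ with $\dot x_s = 0$, $\dot y_s = z - y$, and to $R^\Gamma(z_s, x)$ with $\dot x_s = z - y$, $\dot y_s = 0$. The Leibniz rule produces two $\Gamma(z_s)[z - y]$-terms of opposite sign (one from the right slot of $R^\Gamma(y, z_s)$, one from the left slot of $R^\Gamma(z_s, x)$) which cancel. The curvature contribution from $R^\Gamma(y, z_s)$ vanishes identically because its integrand reduces to $\mathscr{K}_\Gamma[\,t(z - y),\, s(z - y)\,] = 0$ by antisymmetry, so only the curvature integral from $R^\Gamma(z_s, x)$ survives. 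Using that parallel transports are isometries by \cref{lemma: Pt isometry} (so multiplication on either side preserves the Frobenius norm), one obtains
\[\norm{\psi'(s)} \le \int_0^1 \norm{\mathscr{K}_\Gamma((1 - t) z_s + t x)[(1 - t)(z - y),\, x - z_s]} \dd t.\]

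The remaining step is purely geometric. Writing $u = z - y$ and $v = x - y$, so that $x - z_s = v - s u$, the bilinearity and antisymmetry of $\mathscr{K}_\Gamma$ yield $\mathscr{K}_\Gamma[(1 - t)(z - y),\, x - z_s] = (1 - t)\, \mathscr{K}_\Gamma[u, v]$. The map $\Phi \colon (s, t) \in [0, 1]^2 \mapsto (1 - t) z_s + t x$ is an affine parameterization of $\Delta_{x, y, z}$ with $\partial_s \Phi \wedge \partial_t \Phi = (1 - t)\, u \wedge v$, and $\norm{u \wedge v} = 2 \norm{\Delta_{x, y, z}}$. Combining the pointwise bound $\norm{\mathscr{K}_\Gamma[u, v]} \le \norm{\mathscr{K}_\Gamma}\, \norm{u \wedge v}$ with the elementary identity $\int_0^1 \int_0^1 (1 - t) \dd s\, \dd t = 1/2$ then leaves
\[\int_0^1 \norm{\psi'(s)} \dd s \le \Norm{\mathscr{K}_\Gamma}_{L^\infty(\Delta_{x, y, z})}\, \norm{u \wedge v} \cdot \tfrac{1}{2} = \Norm{\mathscr{K}_\Gamma}_{L^\infty(\Delta_{x, y, z})}\, \norm{\Delta_{x, y, z}},\]
which is the asserted inequality.

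The main obstacle is verifying that the two $\Gamma$-terms from the product rule cancel exactly --- this is the algebraic reflection of the gauge invariance of holonomy and the reason formula \eqref{eq: d/ds R(xs,ys)} is structured as it is --- and then checking that the Jacobian of $\Phi$ conspires with the antisymmetric wedge $(1 - t)(z - y) \wedge (x - z_s) = (1 - t)\, u \wedge v$ to produce precisely the area $\norm{\Delta_{x, y, z}}$ in the final estimate, with no spurious factor of $2$.
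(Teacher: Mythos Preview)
Your proof is correct and follows essentially the same strategy as the paper's: interpolate the holonomy through a one-parameter family of degenerate triangles, apply the variation formula \eqref{eq: d/ds R(xs,ys)}, observe that the $\Gamma$-terms cancel in the product rule and that collinearity kills all but one curvature integral, and then recognise the surviving integral as the area. The only difference is the choice of homotopy---the paper shrinks the whole triangle homothetically toward the vertex $x$ (setting $y_s = sy + (1-s)x$, $z_s = sz + (1-s)x$), whereas you keep $x,y$ fixed and slide the third vertex along $\overline{yz}$; both choices work for the same reason and yield the same constant.
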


Here we have set 
\begin{equation}
\label{eq_gigol5jahqu5Ou6ahmidaere}
 \Norm{\mathscr K_\Gamma}_{L^\infty \brk{\Omega}}
 = \sup_{x \in \Omega} \sup_{v, w \in F} \frac{\norm{\mathscr{K} \brk{x}\sqb{v, w}}}{\norm{v}\norm{w}}.
\end{equation}
Since \(\mathscr{K}\) is skew-symmetric, we have by an orthogonalisation argument
\begin{equation}
\label{eq_ij7koolaepheiphahQu2choo}
 \norm{\mathscr{K} \brk{x}\sqb{v, w}} \le  \Norm{\mathscr K_\Gamma}_{L^\infty \brk{\Omega}} \norm{v \wedge w},
\end{equation}
with \(\norm{v \wedge w} = \sqrt{\norm{v}^2 \norm{w}^2 - \inpr{v}{w}^2}\).

The norms on \(\Lin {F}{F}\) in \cref{estimate holonomy triangle} and \eqref{eq_gigol5jahqu5Ou6ahmidaere} have to be the same and can be any norm that is invariant under left- and right-composition with isometries of \(O \brk{F}\). 
We will use them for the operator norm; they also hold for the Hilbert--Schmidt norm.

In the abelian case where \(\mathscr{K}_\Gamma = \dd \Gamma\), \cref{estimate holonomy triangle} follows from Stokes' theorem~\citelist{\cite{Nguyen_VanSchaftingen_2020}*{Lemma 2.1}\cite{Chanillo_VanSchaftingen_2020}}.

\begin{proof}[Proof of \cref{estimate holonomy triangle}]
For \(s \in [0,1]\), we set
\begin{align*}    
    x_s &= x, &
	y_s &= s y + (1-s)x&
	&\text{and}
	&z_s = sz + (1-s)x.
\end{align*}
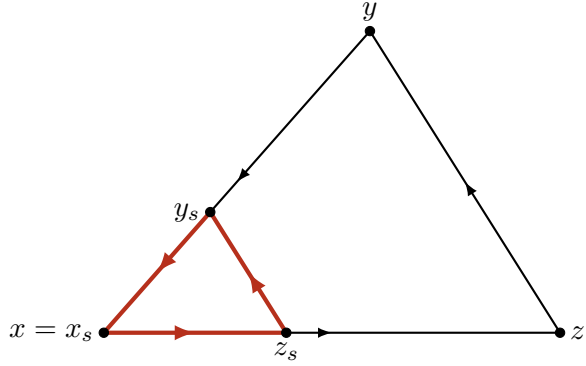
\begin{figure}
    \centering
    \colorlet{PathColor}{BrickRed}
    \begin{tikzpicture}[scale= 1]
    \draw[middlearrow={latex}, thick] (0,0) coordinate (x) --   (6,0) coordinate (z);
    \draw[middlearrow={latex}, thick] (z) -- (3.5,4) coordinate (y);
    \draw[middlearrow={latex}, thick] (y) -- (x);

    \draw[PathColor, middlearrow={latex}, ultra thick] (x) -- ($4/10*(z)+ 6/10*(x)$) coordinate (zr);
    \draw[PathColor, ultra thick, middlearrow={latex}] (zr) -- ($4/10*(y)+ 6/10*(x)$) coordinate (yr) ;
    \draw[PathColor, middlearrow={latex}, ultra thick] (yr) -- (x);

    \filldraw (x) circle (1.75pt) node[left] {$x = x_s$};
    \filldraw (y) circle (1.75pt) node[above] {$y$};
    \filldraw (z) circle (1.75pt) node[right] {$z$};

    \filldraw (yr) circle (1.75pt) node[left] {$y_s$};
    \filldraw (zr) circle (1.75pt) node[below] {$z_s$};

    \end{tikzpicture}
    \caption{The intermediate points \(x_s, y_s, z_s\) and the transport performed by \(W_s = R^\Gamma \brk{x_s, y_s} R^\Gamma \brk{y_s, z_s} R^\Gamma \brk{z_s, x_s}\).}
    \label{fig:triangle holonomie}
\end{figure}
We note that 
\begin{align*}
	y_s - z_s &= s \brk{y - z}, & y_s - x_s &= s \brk{y-x},  & z_s - x_s & = s\brk{z-x},\\
	\tfrac{\dd}{\dd s} x_s &= 0, & \tfrac{\dd}{\dd s} y_s, &= y - x, &\tfrac{\dd}{\dd s} z_s &= z - x.
\end{align*}
We define 
\[
 W_s = R^\Gamma \brk{x_s, y_s} R^\Gamma \brk{y_s, z_s} R^\Gamma \brk{z_s, x_s},
\]
and thus, 
\begin{equation*}
\begin{split}
 &\frac{\dd}{\dd s} W_s\\
 &\; =  \brk[\Big]{\frac{\dd}{\dd s} R^\Gamma\brk{x_s, y_s} - R^\Gamma \brk{x_s, y_s} \Gamma(y_s)\sqb[\big]{\tfrac{\dd y_s}{\dd s}}}
 R^\Gamma \brk{y_s, z_s} R^\Gamma \brk{z_s, x_s}\\
  & \quad + R^\Gamma \brk{x_s, y_s} \brk[\Big]{\frac{\dd}{\dd s} R^\Gamma\brk{y_s, z_s} + \Gamma(y_s)\sqb[\big]{\tfrac{\dd y_s}{\dd s}} R^\Gamma \brk{y_s, z_s}- R^\Gamma \brk{y_s, z_s} \Gamma(z_s)\sqb[\big]{\tfrac{\dd z_s}{\dd s}}} R^\Gamma \brk{z_s, x_s}\\
  &\quad + R^\Gamma \brk{x_s, y_s} R^\Gamma \brk{y_s, z_s}\brk[\Big]{\frac{\dd}{\dd s} R^\Gamma\brk{z_s, x_s} + \Gamma(z_s)\sqb[\big]{\tfrac{\dd z_s}{\dd s}} R^\Gamma \brk{z_s, x_s}},
\end{split}
\end{equation*}
and, since \(\tfrac{\dd x_s}{\dd s} = 0\), we have
\begin{equation}
\label{eq_ohshu2aiqu1ohth5AiL5ae7o}
\begin{split}
 \norm[\bigg]{\frac{\dd}{\dd s} W_s}
 \le  & \norm[\bigg]{\frac{\dd}{\dd s} R^\Gamma\brk{x_s, y_s} + \Gamma(x_s)\sqb[\big]{\tfrac{\dd x_s}{\dd s}} R^\Gamma \brk{x_s, y_s}- R^\Gamma \brk{x_s, y_s} \Gamma(y_s)\sqb[\big]{\tfrac{\dd y_s}{\dd s}}}\\
  & + \norm[\bigg]{\frac{\dd}{\dd s} R^\Gamma\brk{y_s, z_s} + \Gamma(y_s)\sqb[\big]{\tfrac{\dd y_s}{\dd s}} R^\Gamma \brk{y_s, z_s}- R^\Gamma \brk{y_s, z_s} \Gamma(z_s)\sqb[\big]{\tfrac{\dd z_s}{\dd s}}}\\
  &+ \norm[\bigg]{\frac{\dd}{\dd s} R^\Gamma\brk{z_s, x_s} + \Gamma(z_s)\sqb[\big]{\tfrac{\dd z_s}{\dd s}} R^\Gamma \brk{z_s, x_s}- R^\Gamma \brk{z_s, x_s} \Gamma(x_s)\sqb[\big]{\tfrac{\dd x_s}{\dd s}}}.
\end{split}
\end{equation}
Since \(x_s - y_s\), \(\tfrac{\dd}{\dd s} x_s\) and \(\tfrac{\dd}{\dd s} y_s\) are colinear,
it follows then from \eqref{eq_uyooGuezaik5icei3UCaeh3i} that the first term on the right-hand side of \eqref{eq_ohshu2aiqu1ohth5AiL5ae7o} vanishes; the last one vanishes similarly.
It remains thus to estimate the middle term: by \eqref{eq_uyooGuezaik5icei3UCaeh3i} we have 
\[
\begin{split}
  \norm[\bigg]{\frac{\dd}{\dd s} W_s}
  &\le \int_{0}^{1} \norm{\mathscr{K}_\Gamma(\brk{1-t}sy + t s z + (1 -s) x)[s \brk{y - z}, \brk{1-t}y + tz - x]} \dd t\\
  & = s \int_{0}^{1} \norm{\mathscr{K}_\Gamma(\brk{1-t}sy + t s z + (1 -s) x)[y - z,y - x]} \dd t.
\end{split}
\]
By integration and \eqref{eq_ij7koolaepheiphahQu2choo}, we deduce that 
\begin{equation}
 \begin{split}
  &\norm[\big]{\id_F - R^\Gamma \brk{x, y} R^\Gamma \brk{y, z}R^\Gamma \brk{z, x}}\\
  &\qquad \le \int_0^1 s \int_{0}^{1} \norm{\mathscr{K}_\Gamma(\brk{1-t}sy + t s z + (1 -s) x)[y - z,y - x]} \dd t \dd s\\
  &\qquad \le \Norm{\mathscr{K}_\Gamma}_{L^\infty \brk{\Delta_{x, y, z}}} \norm{\Delta_{x, y, z}}
\end{split}
\end{equation}
which proves the claim.
\end{proof}

We shall also need the following estimation for parallel transport along two paths forming a triangle;
\begin{lemma} \label{traces: prop2.1 lemma}
	If \(n\geq 1\), \(U\in  C^1(\smash{\overline{\R}}^{n + 1}_+, F)\) and \(\Gamma\in C^1(\smash{\overline{\R}}_+^{n+1}, \Lin{\R^{n+1}}{\mathfrak{o}(F)})\), then for every \(x\), \(y\), \(z \in \smash{\overline{\R}}^{n + 1}_+\), we have
	\begin{equation*}
	\begin{split}
		\norm{U\brk{x}- R^\Gamma\brk{x, y} U\brk{y}} &\leq \norm{U\brk{x} - R^\Gamma(x,z) U(z)} + \norm{U\brk{y}- R^\Gamma(y,z)U(z)}\\
		&\qquad + \norm{U(z)} \min\{2, \spNorm{\mathscr K_\Gamma }{\infty} \norm{\Delta_{x,y,z}} \}.
		\end{split}
	\end{equation*}
\end{lemma}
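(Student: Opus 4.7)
The plan is to decompose the vector $U\brk{x} - R^\Gamma\brk{x, y} U\brk{y}$ by inserting the transport via the auxiliary point $z$, and then to use the isometric character of $R^\Gamma$ together with the holonomy estimate of \cref{estimate holonomy triangle}. Concretely, I would first add and subtract $R^\Gamma\brk{x, z} U\brk{z}$ to get
\[
U\brk{x} - R^\Gamma\brk{x, y} U\brk{y} = \brk[\big]{U\brk{x} - R^\Gamma\brk{x, z} U\brk{z}} + \brk[\big]{R^\Gamma\brk{x, z} U\brk{z} - R^\Gamma\brk{x, y} U\brk{y}}.
\]
The first bracket produces one of the desired terms on the right-hand side.

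Next, I would handle the second bracket by inserting $R^\Gamma\brk{x, y} R^\Gamma\brk{y, z} U\brk{z}$, yielding
\[
R^\Gamma\brk{x, z} U\brk{z} - R^\Gamma\brk{x, y} U\brk{y} = \brk[\big]{R^\Gamma\brk{x, z} - R^\Gamma\brk{x, y} R^\Gamma\brk{y, z}} U\brk{z} + R^\Gamma\brk{x, y} \brk[\big]{R^\Gamma\brk{y, z} U\brk{z} - U\brk{y}}.
\]
By \cref{lemma: Pt isometry}, the operator $R^\Gamma\brk{x, y}$ is an isometry, so the second summand contributes exactly $\norm{U\brk{y} - R^\Gamma\brk{y, z} U\brk{z}}$, accounting for the second term in the claimed bound.

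For the remaining summand, note that \eqref{eq: R=Pt Pt-1} (applied with $z = x$) gives $R^\Gamma\brk{z, x} R^\Gamma\brk{x, z} = \id_F$, so
\[
R^\Gamma\brk{x, z} - R^\Gamma\brk{x, y} R^\Gamma\brk{y, z} = \brk[\big]{\id_F - R^\Gamma\brk{x, y} R^\Gamma\brk{y, z} R^\Gamma\brk{z, x}} R^\Gamma\brk{x, z}.
\]
Since $R^\Gamma\brk{x, z}$ is an isometry, applying this operator to $U\brk{z}$ and taking norms reduces the estimation to bounding the operator norm of $\id_F - R^\Gamma\brk{x, y} R^\Gamma\brk{y, z} R^\Gamma\brk{z, x}$, which by \cref{estimate holonomy triangle} is at most $\spNorm{\mathscr K_\Gamma}{\infty} \norm{\Delta_{x, y, z}}$, and which is trivially at most $2$ since it is the difference of two isometries. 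Combining the three contributions via the triangle inequality yields the desired inequality. There is no real obstacle here: the convexity of $\smash{\overline{\R}}^{n + 1}_+$ ensures $\Delta_{x, y, z} \subseteq \smash{\overline{\R}}^{n + 1}_+$ so the holonomy estimate applies, and the argument is essentially a careful bookkeeping exercise.
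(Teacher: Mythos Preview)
Your proposal is correct and follows essentially the same approach as the paper: both insert the intermediate point \(z\) via parallel transport, apply the triangle inequality, use that each \(R^\Gamma\) is an isometry, and invoke \cref{estimate holonomy triangle} together with the trivial bound \(2\) for the holonomy term. Your factorisation \(R^\Gamma\brk{x, z} - R^\Gamma\brk{x, y} R^\Gamma\brk{y, z} = \brk{\id_F - R^\Gamma\brk{x, y} R^\Gamma\brk{y, z} R^\Gamma\brk{z, x}} R^\Gamma\brk{x, z}\) is slightly more explicit than the paper's presentation, but the argument is the same.
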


In the abelian case, \cref{traces: prop2.1 lemma} follows essentially from the Stokes formula for circulation integrals \cite{Nguyen_VanSchaftingen_2020}*{Lemma 2.3}.

\begin{proof}[Proof of \cref{traces: prop2.1 lemma}]
	Since \(R^\Gamma(v,w) \in O(F)\) is an isometry for all \(v,w\in \smash{\overline{\R}}^{n + 1}_+\),
	we have 
	\begin{equation}
	 \begin{split}
	    &\norm{U\brk{x} - R^\Gamma\brk{x, y}U\brk{y}}\\
	    &\qquad\le \norm{U\brk{x} - R^{\Gamma} \brk{x, z} U \brk{z}} + \norm{R^\Gamma \brk{x, z} U(z) - R^\Gamma \brk{x, y} R^\Gamma\brk{y, z}U(z)}\\
	    &\qquad \qquad +  \norm{R^\Gamma\brk{x, y} R^\Gamma\brk{y, z}U(z)
	    - R^\Gamma\brk{x, y} U\brk{y}}\\
	    &\qquad = \norm{U\brk{x} - R^\Gamma \brk{x, z} U \brk{z}} + \norm{U(z) - R^\Gamma\brk{z, x}R^\Gamma\brk{x, y} R^{\Gamma} \brk{y, z} U \brk{z}}\\
	    &\qquad \qquad + \norm{U\brk{x}-R^\Gamma\brk{x, z}U \brk{z}}.
	 \end{split}
	\end{equation}
We conclude by noting that 
\[
\norm{U(z) - R^\Gamma\brk{z, y}R^\Gamma\brk{y, x} R^{\Gamma} \brk{x, z} U \brk{z}} \le 2 \norm{U \brk{z}},
\]
and by \cref{estimate holonomy triangle}
\[
 \norm{U(z) - R^\Gamma\brk{z, y}R^\Gamma\brk{y, x} R^{\Gamma} \brk{x, z} U \brk{z}}\leq |U(z)|  \|\mathscr K_\Gamma \|_\infty |\Delta_{x,y,z}|. \qedhere
\]
\end{proof}

\section{Trace estimates on the half-space} 
\label{sect: traces}
We establish in this section \cref{prop traces} which, by taking \(s=1-\frac{1}{p}\), implies (i) in \cref{thm1}.
\begin{proposition}\label{prop traces}
Let \(n \geq 1\), \(0<s<1\) and \(1\leq p < +\infty\). There exists a positive constant \(C=C(n,s,p)>0\) depending only on \(n\), \(s\) and \(p\) such that for every \(\Gamma \in  C^1(\smash{\smash{\overline{\R}}_+^{n+1}}, \Lin{\R^{n+1}}{\mathfrak{o}(F)})\), \(\beta \ge \Norm{\mathscr K_\Gamma }_{\infty}\) and \(U\in  C^1_c(\smash{\smash{\overline{\R}}^{n + 1}_+},F)\)
\begin{equation} \label{prop traces eq1}
    \norm{ U (\cdot, 0)}^p_{W^{s,p}_{\Gamma_\shortparallel}(\R^{n},F)} \leq C \int_{\R^{n+1}_+} \frac{ \norm{\Diff_\Gamma  U(z)}^p + \beta^{p/2} \norm{ U(z)}^p}{z_{n + 1}^{1-(1-s)p}} \dd z
\end{equation}
and 
\begin{equation}\label{prop traces eq2}
\Norm{ U(\cdot , 0)}_{{L^p(\R^{n},F)}}^p
\leq C \Bigl( \int_{\R^{n + 1}_+} \frac{ \norm{\Diff_\Gamma  U(z)}^p}{z_{n +1}^{1-(1-s)p}} \dd z \Bigr)^{1-s} \brk[\bigg]{\int_{\R^{n +1}_+}\frac{\norm{U(z)}^p}{z_{n + 1}^{1-(1-s)p}}\dd z}^s.
\end{equation}
\end{proposition}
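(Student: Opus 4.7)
\textbf{Plan for \eqref{prop traces eq2}.} The plan is to reduce to a one-dimensional argument along vertical segments and optimise in a height parameter. For every $x \in \R^n$ and $h > 0$, \cref{FTC} applied to the vertical path $t \in [0, 1] \mapsto (x, th)$ combined with the isometric property of parallel transport (\cref{lemma: Pt isometry}) yields $\norm{U(x, 0)} \leq \norm{U(x, h)} + \int_0^h \norm{\Diff_\Gamma U(x, t)} \dd t$. A weighted H\"older inequality with exponents $p$ and $p/(p-1)$ and weight $t^{(1 - (1-s)p)/(p-1)}$ (integrable near $0$ for every $s > 0$) then gives
\begin{equation*}
\norm{U(x, 0)}^p \leq C\, \norm{U(x, h)}^p + C\, h^{sp} \int_0^h \frac{\norm{\Diff_\Gamma U(x, t)}^p}{t^{1 - (1 - s) p}} \dd t.
\end{equation*}
Multiplying by $h^{(1 - s) p - 1}$ and integrating over $h \in (0, H)$ produces, for every $H > 0$, $\norm{U(x, 0)}^p \leq C\, (H^{-(1-s)p} B(x) + H^{sp} A(x))$, where $A(x) = \int_0^\infty \norm{\Diff_\Gamma U(x, t)}^p\, t^{-(1 - (1-s) p)} \dd t$ and $B(x)$ is defined analogously with $\norm{U}^p$. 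Optimising at $H^p = B(x)/A(x)$ yields the pointwise bound $\norm{U(x, 0)}^p \leq C\, A(x)^{1 - s} B(x)^s$; integration in $x$ followed by H\"older's inequality with conjugate exponents $1/(1 - s)$ and $1/s$ concludes \eqref{prop traces eq2}.

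\textbf{Plan for \eqref{prop traces eq1}.} For $x, y \in \R^n$, set $\hat x = (x, 0)$, $\hat y = (y, 0) \in \R^{n + 1}_+$ and $r = \norm{x - y}$. Since the segment from $\hat x$ to $\hat y$ lies in $\R^n \times \{0\}$, one has $R^\Gamma(\hat x, \hat y) = R^{\Gamma_\shortparallel}(x, y)$. Applying \cref{traces: prop2.1 lemma} to $\hat x$, $\hat y$ and an auxiliary interior point $z \in \R^{n + 1}_+$ yields
\begin{equation*}
\norm{U(\hat x) - R^{\Gamma_\shortparallel}(x, y) U(\hat y)} \leq T_1(z) + T_2(z) + \min\{2, \beta \norm{\Delta_{\hat x, \hat y, z}}\}\, \norm{U(z)},
\end{equation*}
where $T_1(z) = \norm{U(\hat x) - R^\Gamma(\hat x, z) U(z)}$ and $T_2(z) = \norm{U(\hat y) - R^\Gamma(\hat y, z) U(z)}$. \Cref{FTC} applied to the affine segment from $\hat x$ to $z$ bounds $T_1(z) \leq \norm{z - \hat x} \int_0^1 \norm{\Diff_\Gamma U((1 - t)\hat x + t z)} \dd t$ (and analogously for $T_2$). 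For the holonomy term, the elementary inequality $\min\{2, a\} \leq \sqrt{2 a}$ combined with the area bound $\norm{\Delta_{\hat x, \hat y, z}} \leq C r^2$ yields a contribution bounded by $C \sqrt\beta\, r\, \norm{U(z)}$ whenever $z$ is chosen at height $z_{n + 1} \sim r$.

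\textbf{Averaging, Fubini, and main obstacle.} I then average these bounds over $z$ in a region $V_{x, y} \subset \R^{n + 1}_+$ of measure $\sim r^{n + 1}$ on which $z_{n + 1} \sim r$ (for instance, a box of side $\sim r$ centred above the midpoint of $\hat x, \hat y$ at height $\sim r$), take $p$-th powers via Jensen's inequality, divide by $r^{n + sp}$, and integrate over $(x, y) \in \R^n \times \R^n$. For the holonomy term, a Fubini interchange in $(x, y, z)$ uses that $\{(x, y) : z \in V_{x, y}\}$ has Lebesgue measure $\sim z_{n + 1}^{2 n}$ for each $z$ (with $r \sim z_{n + 1}$ on this set), so that the accumulated powers of $r$ collapse to $z_{n + 1}^{-(1 - (1-s) p)}$ and produce the desired bound $C \beta^{p/2} \int \norm{U(z)}^p\, z_{n + 1}^{-(1 - (1-s) p)} \dd z$. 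For the covariant-derivative terms, the change of variables $w = (1 - t) \hat x + t z$ (with Jacobian $t^{-(n + 1)}$) combined with Fubini in $(x, y, w)$ aims to produce $C \int \norm{\Diff_\Gamma U(w)}^p\, w_{n + 1}^{-(1 - (1-s) p)} \dd w$. The main technical obstacle is the delicate bookkeeping in this last change of variables: the Jacobian $t^{-(n + 1)}$, the measure of $\{(x, y) : w \in (1 - t) \hat x + t V_{x, y}\}$, and the power $r^{(1-s) p - 2 n - 1}$ (with $r \sim w_{n + 1}/t$) must combine so that the residual $t$-integral over $(0, 1)$ converges and the resulting weight matches $w_{n + 1}^{-(1 - (1 - s) p)}$; a careful choice of $V_{x, y}$ and possibly a splitting of the regime in $t$ is required to accommodate the full range $s \in (0, 1)$.
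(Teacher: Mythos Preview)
Your argument for \eqref{prop traces eq2} is correct and essentially the paper's: the paper also integrates along vertical segments, uses a weighted H\"older in the height variable, and optimises over a height parameter \(\lambda\) (your \(H\)). The only cosmetic difference is that the paper averages \(\norm{U(x,t)}\) over \(t\in(0,\lambda)\) directly, whereas you first fix \(h\) and then average in \(h\).

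For \eqref{prop traces eq1} your overall strategy matches the paper's (same triangle lemma, same \(\min\{2,\beta\norm{\Delta}\}\le\sqrt{2\beta\norm{\Delta}}\) trick), but there is a genuine gap in the covariant-derivative term. Your averaging over a box \(V_{x,y}\) together with plain Jensen on the \(t\)-integral produces, after the change of variables \(w=(1-t)\hat x+tz\) and the Fubini step you outline, a residual factor \(\int_0^1 t^{-(1-s)p}\,\dd t\), which diverges precisely when \(s\le 1-\tfrac{1}{p}\). No choice of \(V_{x,y}\) or splitting of the \(t\)-regime repairs this: the divergence comes from the fact that for small \(t\) the point \(w\) sits at height \(\sim t r\), so the cone of segments from \(\hat x\) concentrates too much mass near the boundary.

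The paper avoids this by two simplifications. First, it takes a \emph{single} auxiliary point \(z=\bigl(\tfrac{x+y}{2},\norm{x-y}\bigr)\) rather than averaging over a box, so the bookkeeping you flag as the main obstacle never arises. Second, and crucially, before taking \(p\)-th powers it applies the weighted H\"older inequality
\[
\Bigl(\int_0^1 f(t)\,\dd t\Bigr)^p\le \frac{1}{s^{p-1}}\int_0^1 t^{(1-s)(p-1)}\,f(t)^p\,\dd t,
\]
which inserts exactly the factor \(t^{(1-s)(p-1)}\) needed to turn your divergent \(\int_0^1 t^{-(1-s)p}\,\dd t\) into the convergent \(\int_0^1 t^{-(1-s)}\,\dd t\) for every \(s\in(0,1)\). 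After this, the change of variables is the two-dimensional \((x,y)\mapsto(\eta,\xi)=\bigl((1-\tfrac{t}{2})x+\tfrac{t}{2}y,\,t(x-y)\bigr)\) in \(\R^n\times\R^n\), with Jacobian \(t^{-n}\), and the computation closes without further difficulty. If you insert this weighted H\"older in place of Jensen, you may drop the \(z\)-averaging entirely and the proof goes through cleanly.
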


\begin{proof}
     For each \(x\), \(y \in \R^n\), setting \(z=( \frac{x+y}{2}, \norm{x - y})\), we have
    \begin{equation}\label{traces: proof prop 2.1 eq1}
        \min \{2, \|\mathscr K_\Gamma \|_\infty |\Delta_{x,y,z}| \} \leq \sqrt{2 \|\mathscr K_\Gamma \|_\infty |\Delta_{x,y,z}|} \le \norm{x - y} \beta^{1/2},
    \end{equation}
    and thus \cref{traces: prop2.1 lemma} with \eqref{traces: proof prop 2.1 eq1} yields, with the identification \(x \cong (x,0)\), \(y\cong (y,0)\),
    \begin{equation}\label{traces: proof prop 2.1 eq2}
    \begin{split}
        &|U\brk{x}-R^\Gamma \brk{x, y}U\brk{y} | \\
        &\qquad \leq |U\brk{x} - R^\Gamma (x,z)U(z) |
        + |U\brk{y}-R^\Gamma (y,z)U(z)| +\beta^{1/2} |U(z) | \norm{x - y} .
        \end{split}
    \end{equation}
    The gauge-covariant fundamental theorem of calculus, see \cref{FTC}, applied to \eqref{traces: proof prop 2.1 eq2} yields
    \begin{equation}
\label{prof prop 2.1 eq:3}
\begin{split}
| U\brk{x} - R^\Gamma (x, y)U\brk{y} |
& \leq \int_0^1 | \Diff_\Gamma  U(\brk{1-t}z + t x))[x - z]| \dd t\\
&\qquad + \int_0^1 | \Diff_\Gamma  U(\brk{1 - t} z + t y)[z - y] |\dd t\\
&\qquad + |U(z) | \norm{x - y} \beta^{1/2}.
        \end{split}
    \end{equation}
    Since \(z=( \frac{x+y}{2}, \norm{x - y})\), we have \(
      \norm{x - z} = \norm{y - z}
      \le C \norm{x - y}
    \) for some positive constant \(C>1\), and thus
    \begin{multline}\label{proof prop 2.1 eq:4}
        |U\brk{x} - R^\Gamma \brk{x, y} U\brk{y} | \\
        \leq C \norm{x - y}\brk[\bigg]{\int_0^1 |\Diff_\Gamma  U(\brk{1-t}z + t x) | \dd t + \int_0^1 |\Diff_\Gamma  U(\brk{1-t}z + t x) | \dd t + \beta^{1/2} | U(z) |  }.
    \end{multline}
Recalling that by H\"older's inequality, we have for any measurable function \(f\colon [0,1] \to [0, \infty]\)
 \begin{equation}\label{proof prop 2.1 inequality holder}
 \begin{split}
        \brk[\bigg]{\int_0^1 \norm{f \brk{t}}  \dd t }^p &\leq
        \brk[\bigg]{
        \int_0^1 \frac{1}{t^{1-s}}\dd t}^{p - 1}\int_0^1 t^{(1-s)(p-1)} \norm{f \brk{t}}^p \dd t\\
        &= \frac{1}{s^{p - 1}} \int_0^1 t^{(1-s)(p-1)} \norm{f \brk{t}}^p \dd t
\end{split}
    \end{equation}
and that \(\norm{a+b+c}^p\leq 3^{p-1}(|a|^p + |b|^p + |c|^p )\), raising \eqref{proof prop 2.1 eq:4} to the \(p\) and dividing by \(\norm{x - y}^{n-(1-s)p}\) yields
    \begin{equation}
\label{proof prop 2.1 eq:6}
\begin{split}
        &\frac{| U\brk{x} - R^\Gamma (x, y) U\brk{y}|^p}{\norm{x - y}^{n+sp}}\\
        &\quad \leq C \brk[\bigg]{ \int_0^1 t^{(1-s)(p-1)} \frac{|\Diff_\Gamma  U( (1-\frac{t}{2})x + \frac{t}{2}y , \frac{t}{2}\norm{x - y} ) |^p}{\norm{x - y}^{n-(1-s)p}} \dd t \\
        &\quad \qquad +\int_0^1 t^{(1-s)(p-1)} \frac{|\Diff_\Gamma  U( (1-\frac{t}{2})y + \frac{t}{2}x , \frac{t}{2}\norm{x - y} ) |^p}{\norm{x - y}^{n-(1-s)p}} \dd t
        + \beta^{p/2} \frac{|U(\frac{x+y}{2},\norm{x - y}) |^p}{\norm{x - y}^{n-(1-s)p}}}.
        \end{split}
    \end{equation}
    The integral on the left-hand side of \eqref{proof prop 2.1 eq:6} with respect to \(x\) and \(y\) will now be estimated by the integrals of the three terms on the right-hand side. For \(t\in (0,1)\), we perform the change of variable \(\eta = (1-\frac{t}{2})x + \frac{t}{2}y\) and \(\xi = t\brk{x-y}\) which has a Jacobian of \(t^{-d}\) and we obtain
    \begin{multline} \label{proof prop 2.1 eqf:0}
        t^{(1-s)(p-1)} \int_{\R^n} \int_{\R^n} \frac{|\Diff_\Gamma  U( (1-\frac{t}{2})x + \frac{t}{2}y , \frac{t}{2}\norm{x - y} ) |^p}{\norm{x - y}^{n-(1-s)p}} \dd x \dd y \\
        = \frac{1}{t^{1-s}} \int_{\R^n} \int_{\R^n} \frac{|\Diff_\Gamma  U (\eta,|\xi|)|^p}{|\xi|^{n-(1-s)p}}\dd \xi \dd \eta = \frac{|\SP^{n-1}|}{t^{1-s}} \int_{\R^n} \int_0^{+\infty} \frac{|\Diff_\Gamma  U(\eta , r) |^p}{r^{1-(1-s)p}} \dd r \dd \eta,
    \end{multline}
    where the last equality follows from integration of radial functions. Since \(0<s<1\), Tonelli's theorem and \eqref{proof prop 2.1 eqf:0} imply that there exists a constant \(C>0\) such that
    \begin{multline}\label{proof prop 2.1 eqf:1}
        \int_{\R^n} \int_{\R^n} \int_0^1 t^{(1-s)(p-1)} \frac{|\Diff_\Gamma  U( (1-\frac{t}{2})x + \frac{t}{2}y , \frac{t}{2}\norm{x - y} ) |^p}{\norm{x - y}^{n-(1-s)p}} \dd t\dd x \dd y \\
        \leq C \int_{\R^n} \int_0^{+\infty} \frac{|\Diff_\Gamma  U(x,r) |^p}{r^{1-(1-s)p}} \dd r \dd x
    \end{multline}
    holds, giving us the estimate for the first term of the right-hand side of \eqref{proof prop 2.1 eq:6}. For the second term of the right-hand side of \eqref{proof prop 2.1 eq:6} we obtain in a similar manner a constant \(C>0\) such that
    \begin{multline}\label{proof prop 2.1 eqf:2}
        \int_{\R^n} \int_{\R^n} \int_0^1 t^{(1-s)(p-1)} \frac{|\Diff_\Gamma  U( (1-\frac{t}{2})y + \frac{t}{2}x , \frac{t}{2}\norm{x - y} ) |^p}{\norm{x - y}^{n-(1-s)p}} \dd t\dd x \dd y \\
        \leq C \int_{\R^n} \int_0^{+\infty} \frac{|\Diff_\Gamma  U(x,r) |^p}{r^{1-(1-s)p}} \dd r \dd x.
    \end{multline}
    Finally for the last term of the right-hand side of \eqref{proof prop 2.1 eq:6}, a computation as in \eqref{proof prop 2.1 eqf:0} with the change of variable \(\eta = \frac{x+y}{2}\) and \(\xi=y-x\) as well as spherical coordinates yields
    \begin{equation}\label{proof prop 2.1 eqf:3}
        \int_{\R^n}\int_{\R^n} \frac{|U(\frac{x+y}{2}, \norm{x - y} ) |^p}{\norm{x - y}^{n-(1-s)p}} \dd x \dd y = \norm{\SP^{n-1}} \int_{\R^n}\int_0^{+\infty} \frac{|U(\eta,r)|^p}{r^{1-(1-s)p}} \dd r \dd \eta.
    \end{equation}
    Integrating both sides of \eqref{proof prop 2.1 eq:6} with respect to \(x\) and \(y\) and using the estimates \eqref{proof prop 2.1 eqf:1}, \eqref{proof prop 2.1 eqf:2} and \eqref{proof prop 2.1 eqf:3}, we reach
    \begin{equation*}
    | U (\cdot, 0)|^p_{W^{s,p}_{\Gamma_\shortparallel}(\R^{n},F)} \leq C \int_{\R^{n + 1}_+} \frac{ |\Diff_\Gamma  U(z)|^p + \beta^{p/2} | U(z) |^p}{z_{n + 1}^{1-(1-s)p}} \dd z
\end{equation*}
which is the announced estimate \eqref{prop traces eq1}. 
The proof of the first part of \cref{prop traces} is thus complete. 

We now work towards \eqref{prop traces eq2}. 
By the diamagnetic inequality (\cref{lemma: diamagnetic inequality}),
we have \(|\Diff |U| | \leq | \Diff_\Gamma  U |\) almost everywhere in \(\smash{\overline{\R}}^{n + 1}_+\) and
\begin{equation}\label{proof prop 2.1 p2 eq:1}
\begin{split}
    |U\brk{x} |
    &\leq \frac{1}{\lambda} \int_0^\lambda
    \brk[\bigg]{\int_0^t \norm{\Diff_\Gamma  U(x,\tau)} \dd \tau + \norm{U \brk{x, t}}}\dd t\\
    &\leq \int_{0}^\lambda  \norm{\Diff_\Gamma  U(x,t)}\dd t + \frac{1}{\lambda}\int_{0}^\lambda \norm{U(x,t)}\dd t .
\end{split}
\end{equation}
By H\"older's inequality, we have
\begin{equation} \label{proof prop 2.1 p2 eq:2}
\begin{split}
    \brk[\bigg]{\int_0^\lambda |\Diff_\Gamma  U(x,t) | \dd t }^p
    &\leq \brk[\bigg]{\int_0^\lambda \frac{1}{t^{1 - s\frac{p}{p-1}}} \dd t }^{p-1} \int_0^{+\infty} \frac{|\Diff_\Gamma  U(x,t) |^p}{t^{1-(1-s)p}} \dd t \\
    &= \brk[\Big]{\frac{p - 1}{sp}}^{p - 1}\lambda^{sp} \int_0^{+\infty} \frac{| \Diff_\Gamma  U (x,t) |^p}{t^{1 - (1-s)p}} \dd t
\end{split}
\end{equation}
as well as
\begin{equation}\label{proof prop 2.1 p2 eq:3}
  \brk[\bigg]{\frac{1}{\lambda} \int_{0}^\lambda |U(x,t)| \dd t }^p
  \leq \brk[\Big]{\frac{p - 1}{sp}}^{p - 1}  \frac{1}{\lambda^{(1-s)p}} \int_0^{+\infty} \frac{| U(x,t)|^p}{t^{1-(1-s)p}} \dd t.
\end{equation}
Injecting the estimates \eqref{proof prop 2.1 p2 eq:2} and \eqref{proof prop 2.1 p2 eq:3} into \eqref{proof prop 2.1 p2 eq:1} and integrating, we obtain a constant \(C>0\) such that
\begin{multline} \label{proof prop 2.1 p2 eq:4}
\int_{\R^n}|U\brk{x} |^p \dd x  \leq C \brk[\bigg]{ \lambda^{sp}  \int_{\R^{n + 1}_+} \frac{| \Diff_\Gamma  U (z) |^p}{z_{n + 1}^{1 - (1-s)p}} \dd z + \frac{1}{\lambda^{(1-s)p}} \int_{\R^{n + 1}_+} \frac{| U(z)|^p}{z_{n + 1}^{1-(1-s)p}} \dd z}.
\end{multline}
Finally, optimising \eqref{proof prop 2.1 p2 eq:4} with respect to \(\lambda > 0\) yields a constant \(C > 0\) such that 
\begin{equation*}
    \int_{\R^n}|U\brk{x} |^p \dd x \leq C \brk[\bigg]{\int_{\R^{n + 1}_+} \frac{| \Diff_\Gamma  U (z) |^p}{z_{n + 1}^{1 - (1-s)p}} \dd z}^{1-s}    \brk[\bigg]{\int_{\R^{n + 1}_+} \frac{| U(z)|^p}{z_{n + 1}^{1-(1-s)p}} \dd z}^s
\end{equation*}
which is \eqref{prop traces eq2} and thus concludes the proof of \cref{prop traces}.
\end{proof}

Using the same arguments, we obtain a local variant of \cref{prop traces} which will be used to transfer the result on a manifold with boundary in \cref{sect: geometric tools}. 
Here and in what follows, we denote by \(B(x,r)\) the \(n\)-dimensional ball centred in \(x\in \R^n\) with radius \(r>0\). When \(x=0\), we write \(B_r = B(0,r)\).
\begin{proposition}\label{prop: trace estimates local version}
    Let \(n\geq 1\), \(0<s<1\), \(1 \leq p < +\infty\) and \(0 < R < +\infty\).
    There is a positive constant \(C=C(n,s,p)>0\) such that for each \(\Gamma \in  C^1(\overline{B_R}\times [0,R], \Lin{\R^{n+1}}{\mathfrak {o}(F)})\), \( \beta \geq \Norm{\mathscr K_\Gamma }_{\infty}\) and \(U\in  C^1(B_R \times [0,R], F)\),
    \begin{equation*}
        \int_{B_R}\int_{B_R} \frac{\norm{U\brk{x} - R^\Gamma \brk{x, y}U\brk{y}}^p}{\norm{x - y}^{n+sp}}\dd x \dd y \\
        \leq C \int_{B_R \times (0,R)} \frac{\norm{\Diff_\Gamma  U(z) }^p+\beta^{p/2} \norm{U(z)}^p}{z_{n + 1}^{1-(1-s)p}} \dd z.
    \end{equation*}
\end{proposition}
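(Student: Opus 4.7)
The plan is to follow the strategy of the proof of \cref{prop traces} with one geometric adjustment: the auxiliary point and the associated triangle must stay inside the cylinder $B_R \times [0, R]$. For $x, y \in B_R$ I therefore choose $z = (\tfrac{x+y}{2}, c \norm{x-y})$ with $c = 1/4$; since $\norm{x-y} < 2R$, both $z$ and the whole triangle $\Delta_{x,y,z}$ (whose projection onto $\R^n$ lies in the segment $[x,y] \subset B_R$ by convexity and whose height is at most $c \norm{x-y} < R/2$) are contained in $B_R \times [0, R]$. This is the only geometric modification; the analytic steps are then identical to those of \cref{prop traces}.

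Applying \cref{traces: prop2.1 lemma} and noting $\norm{\Delta_{x,y,z}} = \tfrac{c}{2} \norm{x-y}^2$, so that $\min\{2, \beta \norm{\Delta_{x,y,z}}\} \leq \sqrt{2 \beta \norm{\Delta_{x,y,z}}} = \sqrt{c\beta}\, \norm{x-y}$, and then the gauge-covariant fundamental theorem of calculus (\cref{FTC}) along the segments $[x,z]$ and $[y,z]$, I obtain the pointwise bound
\begin{equation*}
\norm{U\brk{x} - R^\Gamma\brk{x,y} U\brk{y}} \leq C \norm{x-y}\brk[\bigg]{\int_0^1 \norm{\Diff_\Gamma U((1-t)x + tz)} \dd t + \int_0^1 \norm{\Diff_\Gamma U((1-t)y + tz)} \dd t + \beta^{1/2} \norm{U(z)}},
\end{equation*}
which is the local counterpart of \eqref{proof prop 2.1 eq:4}. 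Raising to the $p$-th power, applying the weighted Hölder inequality \eqref{proof prop 2.1 inequality holder}, dividing by $\norm{x-y}^{n+sp}$, and integrating over $(x,y) \in B_R \times B_R$, I obtain the same three-term decomposition as in \eqref{proof prop 2.1 eq:6}.

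For each of the three terms I perform the same change of variables used in \cref{prop traces}: namely $\eta = (1 - t/2) x + (t/2) y$, $\xi = t(x-y)$ (with Jacobian $t^n$) for the two $\Diff_\Gamma U$ terms, and $\eta = \tfrac{x+y}{2}$, $\xi = y - x$ (Jacobian $1$) for the $\norm{U(z)}$ term. In each case the argument of the integrand becomes $(\eta, c\norm{\xi})$; passing to spherical coordinates in $\xi$ and substituting $r = c \norm{\xi}$ reduces the $\xi$-integration to $(0, 2 c R) \subset (0, R)$, while $\eta$ ranges over $B_R$. The Tonelli integration in $t$ produces the factor $1/s$ exactly as in \eqref{proof prop 2.1 eqf:0}. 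Collecting everything yields the right-hand side $\int_{B_R \times (0,R)} (\norm{\Diff_\Gamma U(z)}^p + \beta^{p/2} \norm{U(z)}^p) \, z_{n+1}^{-(1-(1-s)p)} \dd z$, with a constant depending only on $n$, $s$, $p$ (and in particular independent of $R$).

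The principal obstacle is purely geometric: one needs the triangle $\Delta_{x,y,z}$ to be contained in the domain so that \cref{traces: prop2.1 lemma} applies to $\Gamma$ given only on $\overline{B_R} \times [0,R]$, and one needs the pushed-forward $r$-variable to stay in $(0,R)$ after the change of variables. The scaling $c \le 1/2$ (I use $c = 1/4$) resolves both issues simultaneously, and the rest of the argument is a direct transcription of the proof of \cref{prop traces} with all integrals truncated to the bounded domain.
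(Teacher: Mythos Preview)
Your proposal is correct and matches the paper's approach: the paper itself gives no separate proof and simply states that the local version follows ``using the same arguments'' as \cref{prop traces}. The single geometric adjustment you identify---rescaling the height of the auxiliary point by a factor $c\le 1/2$ so that the triangle $\Delta_{x,y,z}$ stays in $\overline{B_R}\times[0,R]$ and the pushed-forward radial variable lands in $(0,R)$---is precisely what is needed, and the rest is indeed a line-by-line transcription of the global proof.
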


\section{Extension to the half-space} \label{sect: extensions}
In this section, we establish \Cref{prop extension} which, by taking \(s=1-\frac{1}{p}\), implies (ii) in \cref{thm1}.
\begin{proposition}\label{prop extension}
    Let \(n \geq 1\), \(0<s<1\) and \(1\leq p < +\infty\).
    There exists a positive constant \(C=C(n,s,p)>0\) depending only on \(n\), \(s\) and \(p\) such that for every \(\Gamma \in   C^1(\smash{\smash{\overline{\R}}_+^{n+1}}, \Lin{\R^{n+1}}{\mathfrak{o}(F)})\), \(\beta \geq \Norm{\mathscr K_\Gamma}_\infty\) and \(u \in   C^1_c(\R^n, F)\) there exists
    \begin{equation*}
    U\in   C^1_c(\smash{\R^{n + 1}_+}, F) \cap   C^0_c(\smash{\smash{\overline{\R}}^{n + 1}_+}, F)
    \end{equation*}
    depending linearly on \(u\) and depending on \(\beta\) such that for all \(x \in \R^n\) we have \(U(x,0)= u\brk{x}\),
    \begin{equation}\label{prop extension eq:1}
        \int_{\R^{n+1}_+} \frac{\norm{\Diff_\Gamma U(z) }^p}{z_{n+1}^{1-(1-s)p}}\dd z\leq C ( \norm{u}^p_{W^{s,p}_{\Gamma_\shortparallel}(\R^n,F)} + \beta^{sp/2} \Norm{u}_{{L^p(\R^n,F)}}^p )
    \end{equation}
    and 
    \begin{equation}
        \label{prop extension eq:2} 
        \int_{\R^{n+1}_+}\frac{\norm{U(z)}^p}{z_{n+1}^{1-(1-s)p}}\dd z \leq \frac{C}{\beta^{(1-s)p/2}} \Norm{u}_{L^p(\R^n,F)}^p.
    \end{equation}
\end{proposition}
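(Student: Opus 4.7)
\textbf{Construction and \eqref{prop extension eq:2}.} I would adapt Gagliardo's averaging extension by combining two gauge-covariant ingredients: a boundary parallel transport inserted inside the average so the integrand lives in a common fibre, and a vertical ambient parallel transport as a prefactor in order to absorb the discrepancy between \(\Gamma(x,t)\) and its horizontal trace \(\Gamma_\shortparallel(x)\), which is not controlled by \(\beta\) alone. Fix a radial mollifier \(\rho\in C^\infty_c(B_1)\) with \(\int\rho=1\), a cut-off \(\eta\in C^\infty([0,+\infty))\) satisfying \(\eta\equiv 1\) on \([0,1]\) and \(\eta\equiv 0\) on \([2,+\infty)\), and set \(\rho_t(w)=t^{-n}\rho(w/t)\). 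Define
\begin{equation*}
  W(x,t)=\int_{\R^n}\rho_t(y-x)\,R^{\Gamma_\shortparallel}\brk{x,y}\,u(y)\,\dd y,\qquad U(x,t)=\eta(\sqrt{\beta}\,t)\,R^\Gamma\brk{(x,t),(x,0)}\,W(x,t).
\end{equation*}
Then \(U\) is linear in \(u\), compactly supported in \(\R^n\times\intvc{0}{2/\sqrt{\beta}}\), and \(U(\cdot,0)=u\) since \(R^{\Gamma_\shortparallel}(x,x)=\id_F\) and \(\rho_t\) is an approximate identity. For \eqref{prop extension eq:2}, the isometry of \(R^\Gamma\) and Jensen's inequality applied to the probability measure \(\rho_t(y-x)\,\dd y\) give \(\norm{U(x,t)}\le\norm{W(x,t)}\) and \(\spNorm{W(\cdot,t)}{L^p(\R^n,F)}\le\spNorm{u}{L^p(\R^n,F)}\); integrating \(t^{(1-s)p-1}\) over \((0,2/\sqrt{\beta})\) then produces the required factor \(\beta^{-(1-s)p/2}\).

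\textbf{Pointwise gradient bound.} The defining ODE of \(R^\Gamma\) along the vertical segment yields \(\partial_tR^\Gamma\brk{(x,t),(x,0)}+\Gamma(x,t)\sqb{0,1}R^\Gamma=0\), while \cref{lemma: formula for derivative of Pt wrt parameter} applied to that segment with both endpoints simultaneously translated by \(s e_j\) delivers
\begin{equation*}
  \norm[\big]{\partial_{x_j}R^\Gamma\brk{(x,t),(x,0)}+\Gamma(x,t)\sqb{e_j,0}R^\Gamma -R^\Gamma\,\Gamma_\shortparallel(x)\sqb{e_j}}\le C\beta t.
\end{equation*}
Consequently the ambient \(\Gamma(x,t)\) in \(\Diff_\Gamma U=\Diff U+\Gamma U\) cancels exactly against the derivative of the prefactor, leaving
\begin{equation*}
  \Diff_\Gamma U\sqb{0,1}=\sqrt{\beta}\,\eta'(\sqrt{\beta}t)\,R^\Gamma W+\eta(\sqrt{\beta}t)\,R^\Gamma\partial_t W
\end{equation*}
and, similarly, \(\Diff_\Gamma U\sqb{e_j,0}=\eta(\sqrt{\beta}t)R^\Gamma\brk{\partial_{x_j}W+\Gamma_\shortparallel(x)\sqb{e_j}W}\) up to an error bounded by \(C\eta\beta t\norm{W}\). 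To handle \(W\) I use \(\partial_t\rho_t(w)=-t^{-n-1}\widetilde\rho(w/t)\) with \(\widetilde\rho=\nabla\cdot(z\rho)\) of zero mean, and \(\int\partial_j\rho_t=0\); subtracting the null-integral constant \(u(x)\) under the mollifier and invoking the boundary version \(\norm{\partial_{x_j}R^{\Gamma_\shortparallel}\brk{x,y}+\Gamma_\shortparallel(x)\sqb{e_j}R^{\Gamma_\shortparallel}\brk{x,y}}\le C\beta\norm{y-x}\) of the same derivative formula produces
\begin{equation*}
  \norm{\partial_tW}+\norm{\partial_{x_j}W+\Gamma_\shortparallel(x)\sqb{e_j}W}\le\frac{C}{t^{n+1}}\int_{B(x,t)}\norm{u(x)-R^{\Gamma_\shortparallel}\brk{x,y}u(y)}\dd y+\frac{C\beta}{t^{n-1}}\int_{B(x,t)}\norm{u(y)}\dd y.
\end{equation*}

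\textbf{Integration and main obstacle.} Summing, \(\norm{\Diff_\Gamma U(x,t)}\) splits into three nonnegative contributions. For the leading one, raising to the \(p\)-th power, applying Jensen under the normalised average, and invoking Fubini by integrating first in \(t\in\intvo{\norm{x-y}}{+\infty}\) (the inner \(t\)-integral evaluates to a constant multiple of \(\norm{x-y}^{-n-sp}\)) converts the weighted integral over \(\R^{n+1}_+\) into
\begin{equation*}
  C\iint_{\R^n\times\R^n}\frac{\norm{u(x)-R^{\Gamma_\shortparallel}\brk{x,y}u(y)}^p}{\norm{x-y}^{n+sp}}\dd x\,\dd y=C\norm{u}^p_{W^{s,p}_{\Gamma_\shortparallel}(\R^n,F)}.
\end{equation*}
The curvature residue and the cut-off term, both effectively supported on \(\{t\lesssim 1/\sqrt{\beta}\}\), each contribute \(C\beta^{sp/2}\spNorm{u}{L^p(\R^n,F)}^p\) by direct computations in the same spirit as \eqref{prop extension eq:2}. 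The most delicate step is the pointwise gradient bound: only through the exact cancellation of the ambient \(\Gamma(x,t)\) by the derivative of \(R^\Gamma\brk{(x,t),(x,0)}\) do the constants remain gauge-invariant and depend on \(\beta\) alone, rather than on the non-gauge-invariant quantity \(\spNorm{\Gamma}{\infty}\); this cancellation has no analogue in the classical vector-valued Gagliardo construction.
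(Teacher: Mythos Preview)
Your proposal is correct and follows essentially the same strategy as the paper's proof. The paper defines \(V(z)=R^\Gamma(z,z')\int \varphi_{z_{n+1}}(z'-x)R^\Gamma(z',x)u(x)\,\dd x\) and sets \(U(z)=\e^{-\beta z_{n+1}^2}V(z)\); since for \(z',x\in\R^n\times\{0\}\) the segment lies in the boundary, \(R^\Gamma(z',x)=R^{\Gamma_\shortparallel}(z',x)\) and \(R^\Gamma(z,z')\) is exactly your vertical prefactor, so your \(R^\Gamma((x,t),(x,0))\,W(x,t)\) is precisely the paper's \(V\). The only substantive difference is the cutoff: you use a compactly supported \(\eta(\sqrt{\beta}\,t)\) while the paper uses the Gaussian \(\e^{-\beta t^2}\); your choice has the minor advantage of giving genuinely compact support as stated in the proposition, whereas the Gaussian only gives rapid decay. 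Your two-stage organisation of the covariant derivative (first peel off the vertical prefactor via its ODE and \cref{lemma: formula for derivative of Pt wrt parameter}, then differentiate \(W\)) is a harmless reorganisation of the paper's single \(\mathrm{H}^\Gamma\) term, and both lead to the same pointwise bound and the same final integrals.
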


\begin{proof}[Proof of \cref{prop extension}]
We fix a function \(\varphi \in   C^\infty_c(\R^n)\) such that
    \begin{align}\label{proof extension def varphi}
        \int_{\R^n} \varphi\brk{y} \dd y &= 1 &
        &\text{and} &
        &\varphi = 0 &\text{ on } \R^n\setminus B(0,1)
    \end{align}
and we define for every \(t \in (0, \infty)\)
the function \(\varphi_t \colon \R^n \to \R\) for each \(y \in \R^n\) by
\begin{equation*}
        \varphi_{t} \brk{y} = \frac{1}{t^n} \varphi \brk[\Big]{\frac{y}{t}}.
\end{equation*}
We define the function \(V \colon \R^{n + 1}_+ \to F\)  for \(z=(z^\prime, z_{n+1}) \in \R^{n + 1}_+\) by
\begin{equation}\label{proof prop extension def u}
\begin{split}
    V(z)&= R^\Gamma(z, z')
    \brk[\big]{\brk{R^\Gamma \brk{z', \cdot} u} \ast \varphi_{z_{n + 1}}}\brk{z'}\\
    & = \frac{R^\Gamma(z, z')}{z_{n + 1}^{n} }\int_{\R^n}  R^\Gamma\brk{ z', x} u\brk{x} \,\varphi \brk[\Big]{\frac{z'-x}{z_{n + 1}}} \dd x
\end{split}
\end{equation}
and then \(U \colon \R^{n + 1}_+ \to F\) by
\begin{equation}
\label{eq_faimaenee4ob3yekahrai4Mi}
 U \brk{z} = \e^{-\beta z_{n+ 1}^2} V \brk{z}.
\end{equation}

We begin by showing that \(U\) takes the prescribed boundary value.
Given \(x \in \R^n \cong \R^n \times \set{0}\),
we have by Lebesgue's dominated convergence theorem
\begin{equation*}
\begin{split}
\lim_{\substack{z \to x\\ z_{n + 1} >0}} U(z) &=
    \lim_{\substack{z \to x\\ z_{n + 1} >0}} V(z)\\
    &= \lim_{\substack{z' \to x\\ z_{n + 1} \downarrow 0}}
     R^\Gamma(z, z')\int_{\R^n}  R^\Gamma( z', z' - z_{n + 1} y)\, u(z' - z_{n + 1} y)\, \varphi \brk{y} \dd y
     = u \brk{x},
     \end{split}
\end{equation*}
by \eqref{proof extension def varphi}, since \(R^\Gamma \brk{x, x} = \id_F\).

Since
\begin{equation}
\label{eq_OoX1wee7xi5eishahsh0lil7}
\int_{\R^n}
 \brk[\bigg]{\int_{B_{z_{n + 1}}\brk{z'}} \norm{u}^p}\dd z'
 \le C z_{n + 1}^{n} \int_{\R^n} \norm{u}^p,
\end{equation}
we also have
\begin{equation*}
 \int_{\R^n} \norm{V \brk{z', z_{n + 1}}}^p \dd z^\prime
 \le C \int_{\R^n} \norm{u}^p.
\end{equation*}
so that
\begin{equation}
\label{eq_Eechi4Ogah0weethahquip8p}
 \int_{\R^n} \norm{U \brk{z', z_{n + 1}}}^p \dd z'
 \le C \e^{-p \beta z_{n + 1}^2}\int_{\R^n} \norm{u}^p.
\end{equation}
and
\begin{equation*}
 \int_{\R^{n + 1}} \frac{\norm{U \brk{z', z_{n + 1}}}^p}{z_{n + 1}^{1 - \brk{1 - s}p}} \dd z
 \le C \int_0^\infty \frac{\e^{-p \beta t^2}}{t^{1 - \brk{1 - s}{p}}} \dd t\int_{\R^n} \norm{u}^p
 \le \frac{C'}{\beta^{\brk{1 - s}p/2}} \int_{\R^n} \norm{u}^p.
\end{equation*}

In order to estimate the derivative, we note that
\begin{equation}
\label{eq_doofu8goohahphieV5feeLu0}
\begin{split}
 \Diff_\Gamma V \brk{z}
 &=
 \frac{R^\Gamma(z, z')}{z_{n + 1}^{n + 1} }\int_{\R^n}  R^\Gamma\brk{ z', y}\, u\brk{y} \,\Phi \brk[\Big]{\frac{z'-y}{z_{n + 1}}} \dd y\\
 &\qquad +
 \frac{1}{z_{n + 1}^n}
 \int_{\R^n} \mathrm{H}^\Gamma \brk{z, y}\, u \brk{y} \, \varphi \brk[\Big]{\frac{z'-y}{z_{n + 1}}} \dd y.
\end{split}
\end{equation}
where \(\Phi \colon \R^{n} \to \Lin {\R^{n}}{\R}\) is defined for
\(y \in \R^n\) and \(v = \brk{v', v_{n + 1}} \in \smash{\R^{n + 1}_+}\) by
\[
  \Phi \brk{y}\sqb{v}
  = \Diff \varphi \brk{y}\sqb{v'} - \brk{\Diff \varphi \brk{y}\sqb{v^\prime} + n \varphi \brk{y}}v_{n + 1},
\]
and \(\mathrm{H}^\Gamma \colon \R^{n + 1}_+ \times \R^n \to \Lin{F}{\Lin{\R^n}{F}}\) is defined for every \(z \in \R^{n + 1}_+\) and \(x \in \R^n\) by
\[
 \mathrm{H}^\Gamma \brk{z, x}
 = \Diff_z \brk{\brk{R^\Gamma \brk{z, z'} R^\Gamma \brk{z', x}}}
 + \Gamma \brk{z} R^\Gamma \brk{z, z'} R^\Gamma \brk{z', x}.
\]
Rewriting the first term of the right-hand side of \eqref{eq_doofu8goohahphieV5feeLu0}, thanks to the identity\(\int_{\R^n} \Phi = 0\), as
\[
 \int_{\R^n}  R^\Gamma\brk{ z', x}\, u\brk{x} \,\Phi \brk[\Big]{\frac{z'-x}{z_{n + 1}}} \dd x
 = \int_{\R^n}  \brk{R^\Gamma\brk{ z', x} u\brk{x} - u \brk{z'}} \,\Phi \brk[\Big]{\frac{z'-x}{z_{n + 1}}} \dd x,
\]
we estimate it as
\begin{equation}
\label{eq_Cheithee4engifohNgohzuid}
\begin{split}
 &\norm[\bigg]{\frac{R^\Gamma(z, z')}{z_{n + 1}^{n + 1} }\int_{\R^n}  R^\Gamma\brk{ z', x}\,  u\brk{x} \,\Phi \brk[\Big]{\frac{z'-x}{z_{n + 1}}} \dd x}\\
 &\qquad \le \frac{1}{z_{n + 1}^{n + 1}} \int_{B_{z_{n + 1}} \brk{z'}} \norm{u\brk{x} - R^\Gamma\brk{x, z'}u\brk{z'}} \dd x.
 \end{split}
\end{equation}
Next, it follows from \cref{lemma: formula for derivative of Pt wrt parameter} and \eqref{eq_uyooGuezaik5icei3UCaeh3i} that
\[
\begin{split}
  \norm{\mathrm{H}^\Gamma \brk{z, y}\sqb{v}}
  &\le \int_{0}^1 \norm{\mathscr{K}_\Gamma \brk{z', tz_n} \sqb{\brk{0, z_n}, \brk{v', t v_n}}} \dd t\\
  &\qquad + \int_{0}^1 \norm{\mathscr{K}_\Gamma \brk{\brk{1 - t}y + t z'} \sqb{\brk{z'-y, 0}, \brk{v', 0}}} \dd t\\
  &\le C \beta \norm{y - z} \norm{v},
\end{split}
\]
and thus, in view of \eqref{eq_OoX1wee7xi5eishahsh0lil7},
\begin{equation}
\label{eq_Udeewahph6IeGh5ochee4ik1}
\norm[\bigg]{
 \frac{1}{z_{n + 1}^n}
 \int_{\R^n} \mathrm{H}^\Gamma \brk{z, x} u \brk{x} \, \varphi \brk[\Big]{\frac{z'-x}{z_{n + 1}}} \dd x}
 \le \frac{C \beta}{z_{n + 1}^{n - 1}}
 \int_{B_{z_{n + 1}}\brk{z'}} \norm{u \brk{x}} \dd x.
\end{equation}
Combining \eqref{eq_doofu8goohahphieV5feeLu0}, \eqref{eq_Cheithee4engifohNgohzuid} and \eqref{eq_Udeewahph6IeGh5ochee4ik1} with Jensen’s inequality, we get
\[
\norm{
 \Diff_\Gamma V \brk{z}}^p
 \le C
 \brk[\bigg]{\frac{1}{z_{n + 1}^{n + p}}
 \int_{B_{z_{n + 1}} \brk{z'}} \norm{u\brk{x} - R^\Gamma\brk{x, z'}u\brk{z'}}^p \dd x +
  \frac{\beta^p}{z_{n + 1}^{n - p}}
 \int_{B_{z_{n + 1}}\brk{z'}} \norm{u \brk{x}}^p \dd x
 }
\]
and thus, by \eqref{eq_OoX1wee7xi5eishahsh0lil7},
\begin{equation}
\label{eq_sua8koh2eXaiTh2zaiW0Achu}
\begin{split}
&\int_{\R^n}
 \frac{\norm{
 \Diff_\Gamma V \brk{z', z_{n + 1}}}^p}{z_{n+1}^{1 - \brk{1 - s}p}} \dd z'\\
&\qquad \le C \brk[\bigg]{\int_{\R^n}
 \int_{B_{z_{n + 1}} \brk{z'}} \frac{\norm{u\brk{x} - R^\Gamma\brk{ x, z'}u\brk{z'}}^p}{z_{n + 1}^{1 + n + sp}} \dd y \dd z'
 + \frac{\beta^p}{z_{n +1}^{1 - \brk{2 - s}p}}\int_{\R^n} \norm{u}^p}.
\end{split}
\end{equation}
By \eqref{eq_faimaenee4ob3yekahrai4Mi}, we have
\begin{equation}
  \norm{\Diff_\Gamma U \brk{z',z_{n + 1}}}
  \le \e^{-\beta z_{n + 1}^2} \brk{\norm{\Diff_\Gamma V \brk{z',z_{n + 1}}}
  + 2 \beta z_{n + 1}  \norm{V \brk{z', z_{n + 1}}}},
\end{equation}
and thus by \eqref{eq_sua8koh2eXaiTh2zaiW0Achu} and \eqref{eq_Eechi4Ogah0weethahquip8p}, we have
\begin{equation}
\begin{split}
  &\int_{\R^{n + 1}_+}
  \frac{\norm{\Diff_{\Gamma} U \brk{z', z_{n + 1}}}}{z_{n + 1}^{1 - \brk{1 - s}p}} \dd z\\
  &\le C
  \int_0^\infty \e^{-p \beta z_{n + 1}^2}
  \brk[\bigg]{
  \int_{\R^n}  \frac{\norm{\Diff_{\Gamma} V \brk{z', z_{n + 1}}}^p}{z_{n + 1}^{1 - \brk{1 - s}p}}
  +  \frac{\beta^p \norm{U \brk{z', z_{n + 1}}}^p}{z_{n + 1}^{1 - \brk{2 - s}p}} \dd z'}\dd z_{n + 1}\\
  &\le C'
  \int_0^\infty \e^{-p \beta t^2}
  \brk[\bigg]{ \int_{\R^n} \int_{B_{t} \brk{y}} \frac{\norm{u\brk{x} - R^\Gamma\brk{ x, y}u\brk{y}}^p}{t^{1 + n + sp}} \dd x \dd y
  +  \int_{\R^n} \frac{\beta^p \norm{u}^p}{t^{1 - \brk{2 - s}p}}}\dd t.
\end{split}
\end{equation}
In order to conclude, we note that
\[
 \int_{\norm{x - z'}}^\infty \frac{\e^{-p \beta t^2}}{t^{1 + n + sp}} \dd t
 \le \frac{C}{\norm{x - z'}^{n + sp}}
\]
and
\[
 \int_0^\infty \frac{\e^{-p \beta t^2}}{t^{1 - \brk{2 - s}p}} \dd t
 \le
 \frac{C}{
\beta^{\brk{1 - \frac{s}{2}}p}}. \qedhere
\]
\end{proof}

As for \cref{prop traces}, we have a local variant of \cref{prop extension} which will be used in \cref{sect: geometric tools} to transfer to result to manifolds with boundary.
\begin{proposition}\label{prop: extension local version}
    Let \(n\geq 1\), \(0<s<1\), \(1\leq p < +\infty\) and \(0 < R < +\infty\). If \(\Gamma\in   C^1( \overline{B_R}\times [0,R], \Lin{\R^{n+1}}{\mathfrak {o}(F)})\) and \(\Norm{\mathscr K_\Gamma}_{L^\infty(B_{2R}\times [0,R])} + \frac{1}{R^2} \leq \beta < +\infty\), then there exists a positive constant \(C=C(n,s,p)>0\) such that for all \(u \in   C^1(B_{2R},F)\) there exists 
    \begin{equation*}
        U \in   C^1( B_R \times (0,R), F) \cap   C^0( \overline{B_R}\times[0,R],F)
    \end{equation*}
    depending linearly on \(u\) such that \(U(\cdot ,0)=u(\cdot)\) on \(B_R\),
    \begin{multline*}
        \int_{B_R} \int_0^R \frac{\norm{\Diff_\Gamma U(z)}^p}{z_{n + 1}^{1-(1-s)p}} \dd z \\ 
        \leq C \Bigl(\int_{B_{2R}}\int_{B_{2R}} \frac{\norm{u\brk{x}- R^\Gamma\brk{x, y}u\brk{y} }^p}{\norm{x-y}^{n+sp}} \dd x \dd y + \beta^{sp/2} \int_{B_{2R}} \norm{U\brk{x}}^p \dd x \Bigr)
    \end{multline*}
    and
    \begin{equation*}
        \int_{B_R} \int_0^R \frac{\norm{U(z)}^p}{z_{n+1}^{1-(1-s)p}} \dd z \leq \frac{C}{\beta^{(1-s)p/2}} \int_{B_{2R}} \norm{u\brk{x}}^p \dd x.
    \end{equation*}
\end{proposition}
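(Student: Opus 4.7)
The plan is to mimic the construction in the proof of \cref{prop extension} applied to the restriction \(u|_{B_{2R}}\), ensuring that the mollification remains localised inside \(B_{2R}\) and that the Gaussian cutoff \(\e^{-\beta z_{n+1}^2}\) in the height variable effectively truncates the extension at the scale \(1/\sqrt{\beta} \le R\), which is consistent with the assumption \(\beta R^2 \ge 1\).

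Concretely, I would fix a mollifier \(\varphi \in C^\infty_c(\R^n)\) as in \eqref{proof extension def varphi} and define, for \(z = (z', z_{n+1}) \in B_R \times (0, R)\),
\[
V(z) = \frac{R^\Gamma(z, z')}{z_{n+1}^n} \int_{\R^n} R^\Gamma(z', x)\, u(x)\, \varphi\brk[\big]{\tfrac{z'-x}{z_{n+1}}} \dd x, \qquad U(z) = \e^{-\beta z_{n+1}^2}\, V(z).
\]
Since \(\varphi\) is supported in the unit ball and since \(z' \in B_R\), \(z_{n+1} \le R\), the integrand is concentrated in \(B_{z_{n+1}}(z') \subset B_{2R}\), so only \(u|_{B_{2R}}\) is ever used; continuity on \(\overline{B_R} \times [0, R]\) and the boundary value \(U(\cdot, 0) = u\) on \(B_R\) then follow from dominated convergence exactly as in the proof of \cref{prop extension}. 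For the \(L^p\)-type bound on \(U\), Young's inequality for convolutions gives \(\Norm{V(\cdot, z_{n+1})}_{L^p(B_R)} \le C \Norm{u}_{L^p(B_{2R})}\), and integrating against \(\e^{-p\beta z_{n+1}^2}/z_{n+1}^{1-(1-s)p}\) over \((0, R) \subset (0, \infty)\) produces the bound \(C\beta^{-(1-s)p/2} \Norm{u}_{L^p(B_{2R})}^p\) via the substitution \(t = \tau/\sqrt{\beta}\).

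For the gradient estimate, I would reproduce the decomposition \eqref{eq_doofu8goohahphieV5feeLu0}: the mollifier-derivative contribution is rewritten using \(\int_{\R^n} \Phi = 0\) as a finite-difference expression \(R^\Gamma(z', x) u(x) - u(z')\) supported in \(B_{z_{n+1}}(z')\), while the holonomy term \(\mathrm{H}^\Gamma\) is controlled pointwise by \(C \beta \norm{x - z}\) via \cref{lemma: formula for derivative of Pt wrt parameter} and \eqref{eq_uyooGuezaik5icei3UCaeh3i}. After Jensen's inequality and Fubini, the first contribution becomes the local Gagliardo integrand on pairs in \(B_{2R} \times B_{2R}\), using the tail bound \(\int_{\norm{x-y}}^\infty \e^{-p\beta t^2}/t^{1+n+sp}\dd t \le C/\norm{x-y}^{n+sp}\), while the \(\beta\)-weighted \(L^p\) term produces \(\beta^{sp/2} \int_{B_{2R}} \norm{u}^p\) after Gaussian integration. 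The extra \(2\beta z_{n+1} V\) contribution coming from differentiating the cutoff is absorbed in the same manner.

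The main obstacle will be bookkeeping the localisation: verifying that the Fubini rearrangement yields integration over \(B_{2R} \times B_{2R}\) without boundary losses, and that every tail integral in \(z_{n+1}\) converges on the finite range \((0,R)\) so as to produce the announced \(\beta\)-dependence rather than an \(R\)-dependent constant. Both should follow from the strict inclusion \(B_{z_{n+1}}(z') \subset B_{2R}\) whenever \(z' \in B_R\) and \(z_{n+1} \le R\), combined with the Gaussian decay which remains effective under the hypothesis \(\beta R^2 \ge 1\).
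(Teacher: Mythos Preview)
Your proposal is correct and follows precisely the approach the paper intends: the statement is presented there as a local variant of \cref{prop extension} without a separate proof, the implicit claim being that the same construction and estimates carry over once one observes the inclusion \(B_{z_{n+1}}(z') \subset B_{2R}\) for \((z', z_{n+1}) \in B_R \times (0,R)\) and uses \(\beta R^2 \ge 1\) for the Gaussian tail. You have supplied exactly those localisation details.
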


\section{Characterisation of trace spaces on the half-space}\label{section: density}
We establish in this section the density of smooth and compactly supported functions in the gauge-covariant Sobolev spaces on \(\smash{\overline{\R}}^{n + 1}_+\). Together with \cref{thm1}, \cref{density in WA} and \cref{density in Wsp A'} imply \cref{trace thm on Rn} in the introduction.

For \( \alpha \in \R\), we define the first order weighted gauge-covariant Sobolev space
\begin{equation*}
	W^{1,p}_{\Gamma , \alpha}(\smash{\smash{\overline{\R}}^{n + 1}_+}, F) = \set[\Big]{U \in W^{1,1}_{\mathrm{loc}}(\smash{\smash{\overline{\R}}^{n + 1}_+}, F) \st \Norm{U}_{W^{1,p}_{\Gamma, \alpha}(\smash{\smash{\overline{\R}}^{n + 1}_+},F)} < +\infty}
\end{equation*}
where
\begin{equation}
	\label{eq: Weighted Norm}
	\Norm{U}_{W^{1,p}_{\Gamma, \alpha}(\smash{\smash{\overline{\R}}^{n + 1}_+},F)} = \Bigl(\int_{\R^{n + 1}_+} \bigl(\norm{U(z)}^p + \norm{\Diff U (z)}^p \bigr) z_{n + 1}^{\alpha} \dd z\Bigr)^{1/p}.
\end{equation} 
With this norm, the weighted space is a Banach space. The energies on the right-hand side of \cref{prop traces} and the left-hand side of \cref{prop extension} are associated to the weighted space for \(\alpha = (1-s)p-1\).

The density results presented in \cref{density in WA,density in Wsp A'} were established by Nguyen and the first author in the scope of magnetic Sobolev spaces~\cite{Nguyen_VanSchaftingen_2020}*{\S 4} and directly carry over to the general case.

\begin{lemma}\label{density in WA}
Let \(1 \leq p < +\infty\) and \(-1 < \alpha < p-1\). If \(\Gamma\in L_{\mathrm{loc}}^\infty(\smash{\R^{n + 1}_+}, \Lin{\R^{n+1}}{\mathfrak {o}(F)})\), then \(C^\infty_c( \smash{\smash{\overline{\R}}^{n + 1}_+}, F)\) is dense in \(W_{\Gamma,\alpha}^{1,p}(\smash{\overline{\R}}^{n+1}_+,F)\).
\end{lemma}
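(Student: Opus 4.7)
The plan is to follow the two-step strategy employed by Nguyen and Van Schaftingen in the magnetic setting: first truncate to obtain compactly supported approximations, then regularise by a shift-and-mollify procedure. The key preliminary observation is that since \(\Gamma \in L^\infty_{\mathrm{loc}}\), for any function \(U\) supported in a fixed compact \(K \subset \overline{\R}^{n + 1}_+\), the identity \(\Diff U = \Diff_\Gamma U - \Gamma U\) shows that the gauge-covariant and standard weighted norms are equivalent on functions supported in \(K\). The density problem thus reduces to the classical one for the standard weighted Sobolev space \(W^{1,p}_\alpha\).

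For the truncation step, fix \(\chi \in C^\infty_c(\R^{n+1}, [0,1])\) with \(\chi = 1\) on \(B_1\) and supported in \(B_2\), and set \(\chi_R(z) = \chi(z/R)\). By Leibniz,
\[
  \Diff_\Gamma(\chi_R U) = \chi_R \Diff_\Gamma U + (\Diff \chi_R)\, U,
\]
the first term converges to \(\Diff_\Gamma U\) in \(L^p_\alpha\) by dominated convergence, and the second is controlled in \(L^p_\alpha\)-norm by
\[
  \frac{C^p}{R^p} \int_{B_{2R} \setminus B_R} \norm{U}^p z_{n+1}^\alpha \dd z,
\]
which tends to \(0\) as \(R \to \infty\) by integrability of \(\norm{U}^p z_{n+1}^\alpha\).

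For the regularisation step, given \(U \in W^{1,p}_{\Gamma, \alpha}\) with compact support in \(\overline{\R}^{n + 1}_+\), define
\[
  U_\varepsilon(z) = \int_{\R^{n+1}} \widetilde U(z' - \zeta', z_{n+1} + \varepsilon - \zeta_{n+1})\, \rho_\varepsilon(\zeta) \dd \zeta,
\]
where \(\widetilde U\) extends \(U\) by zero outside \(\R^{n+1}_+\) and \(\rho_\varepsilon\) is a standard mollifier supported in \(B_\varepsilon\). The inward \(\varepsilon\)-shift in the normal direction ensures \(U_\varepsilon \in C^\infty_c(\overline{\R}^{n + 1}_+, F)\), because the convolution only probes values of \(\widetilde U\) where the function is \(W^{1,p}\)-regular, and the shift guarantees that no mass is picked up across the discontinuity of \(\widetilde U\) along \(\{z_{n+1}=0\}\). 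By the preliminary observation, it then suffices to show \(U_\varepsilon \to U\) in \(W^{1,p}_\alpha\).

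The main obstacle is the \(L^p_\alpha\) continuity of translations, needed both for the shift itself and for a uniform-in-\(\varepsilon\) boundedness of the convolution operator: one must verify \(\Norm{U(\cdot , \cdot + \varepsilon) - U}_{L^p_\alpha} \to 0\) and a uniform bound on \(\rho_\varepsilon \ast\) as a map \(L^p_\alpha \to L^p_\alpha\). The condition \(-1 < \alpha < p-1\) is precisely what makes \(z_{n+1}^\alpha\) a Muckenhoupt \(A_p\) weight on \(\R^{n + 1}_+\), from which both properties follow by classical weighted theory; alternatively, one argues by density of compactly supported continuous functions in \(L^p_\alpha\) (valid since \(\alpha > -1\) makes the weight locally integrable) together with the elementary estimate \((z_{n+1} + \varepsilon)^\alpha \lesssim z_{n+1}^\alpha + \varepsilon^\alpha\) to control the boundary layer.
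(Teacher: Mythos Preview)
Your overall architecture---truncate, observe that on compact supports the gauge-covariant and trivial weighted norms are equivalent because \(\Gamma\in L^\infty_{\mathrm{loc}}\), then regularise in the classical weighted space---is exactly the paper's strategy. The truncation computation and the reduction to \(W^{1,p}_{0,\alpha}\) match essentially verbatim.

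The difference is in the regularisation step. The paper extends the compactly supported \(U\in W^{1,p}_{0,\alpha}(\overline{\R}^{n+1}_+)\) to all of \(\R^{n+1}\) by \emph{even reflection} in \(z_{n+1}\) and then invokes known density results for \(W^{1,p}\) with the \(A_p\) weight \(|z_{n+1}|^\alpha\) on the full space (citing Miller, Muckenhoupt--Wheeden, Turesson). Your shift-and-mollify on the half-space is a legitimate alternative, but the way you decompose the analysis is slightly off: translation \(T_\varepsilon f(z)=f(z+\varepsilon e_{n+1})\) is \emph{not} uniformly bounded on \(L^p_\alpha(\R^{n+1}_+)\) when \(\alpha<0\) (take \(f=\mathbf 1_{[\varepsilon+\eta,\varepsilon+\eta+\delta]}\) in the normal variable with \(\eta\ll\varepsilon\)), so ``continuity of translations'' in \(L^p_\alpha\) does not follow from the \(A_p\) property nor from the elementary estimate you mention. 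What \emph{does} work is to treat the combined operator \(f\mapsto f*\widetilde\rho_\varepsilon\) with \(\widetilde\rho_\varepsilon(\cdot)=\rho_\varepsilon(\cdot+\varepsilon e_{n+1})\) as a single convolution dominated pointwise by \(Mf\) (maximal function of the zero-extension of \(f\) to \(\R^{n+1}\)); then the \(A_p\) bound on \(M\) gives uniform \(L^p_\alpha\)-boundedness, and convergence follows by density of \(C_c\). So your route is correct once reframed this way, but the paper's even-reflection trick sidesteps the issue entirely by moving to the full space where no shift is needed.
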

\begin{proof}
Let $\chi \in C^\infty_c(\R^{n+1},\R)$ such that $0\leq \chi \leq 1$ and $\chi\brk{x}=1$ for all $|x|<1$. For all $\lambda > 0$, we define \(
	\chi_\lambda \colon \R^{n+1} \to \R\),
	by setting for \(x \in \R^{n + 1}\), \(\chi_\lambda\brk{x} = \chi(x/\lambda)\).
We then compute, for $U\in W^{1,p}_{\Gamma, \alpha}(\smash{\smash{\overline{\R}}^{n + 1}_+},F)$,
\begin{equation*}
	\Diff_\Gamma(U - \chi_\lambda U) = -U\Diff\chi_\lambda +(1-\chi_\lambda)\Diff_\Gamma U.
\end{equation*}
Therefore, we have 
\begin{multline*}
	\|U - \chi_\lambda U\|^p_{W^{1,p}_{\Gamma, \alpha}(\smash{\overline{\R}}^{n+1}_+,F)} \\
	\leq C \int_{\R^{n+1}} \Bigl((1-\chi_\lambda\brk{x})^p ( \norm{U \brk{x}}^p + | \Diff_\Gamma U\brk{x}|^p) + |\Diff\chi_\lambda\brk{x}|^p \norm{U \brk{x}}^p\Bigr) x_{n + 1}^\alpha \dd x \to 0
\end{multline*}
as $\lambda \to 0$ since $|\Diff\chi_\lambda| \leq C/\lambda$ for some constant $C> 0$. It follows that any function in $\smash{W^{1,p}_{\Gamma,\alpha}}(\smash{\R^{n + 1}_+},F)$ can be approximated by functions in $\smash{W^{1,p}_{\Gamma,\alpha}}(\R^{n+1},F)$ whose support is bounded.

Since \(-1 < \alpha < p-1\), any function in \(\smash{W^{1,p}_{0,\alpha}}(\smash{\smash{\overline{\R}}^{n+1}}, F)\) can be extended by even reflection to the entire space \(\smash{\R^{n+1}}\) and can be approximated in \(\smash{W^{1,p}_{0,\alpha}}(\smash{\smash{\overline{\R}}^{n+1}},F)\) by smooth functions with bounded support \citelist{\cite{Miller_1982}*{Lemma 2.4}\cite{Muckenhoupt_Wheeden_1978}*{Lemma 8}\cite{Tureson_2000}*{Cor.\ 2.1.5}}.
In particular, $C_c^\infty(\smash{\smash{\overline{\R}}^{n + 1}_+},F)$ is dense in the trivial gauge-covariant Sobolev space \(\smash{W^{1,p}_{0,\alpha}}(\smash{\smash{\overline{\R}}^{n + 1}_+},F)\) by restricting smooth and compactly supported functions on \(\smash{\R^{n + 1}}\) to the half space.

Since $\Gamma$ is locally bounded on \(\smash{\R^{n+1}_+}\), any function $U\in \smash{W^{1,p}_{\Gamma,\alpha}}(\smash{\smash{\overline{\R}}^{n+1}},F)$ whose support is bounded in \(\R^{n + 1}_+\) is also in $\smash{W^{1,p}_{0,\alpha}}(\smash{\smash{\overline{\R}}^{n+1}},F)$.
In fact, for such a function $U\in W^{1,p}_{\Gamma, \alpha}(\smash{\smash{\overline{\R}}^{n + 1}_+},F)$ with bounded support, we have
\begin{equation*}
	 \Norm{U}_{\smash{W^{1,p}_{0,\alpha}}(\smash{\smash{\overline{\R}}^{n+1}},F)} \leq (1+ \Norm{\Gamma}_{L^\infty(\Supp U)}) \Norm{U}_{\smash{W^{1,p}_{\Gamma,\alpha}}(\smash{\smash{\overline{\R}}^{n+1}},F)}.
\end{equation*}
Since the space $ C^\infty_c(\smash{\smash{\overline{\R}}^{n + 1}_+},F)$ is dense $\smash{W^{1,p}_{0,\alpha}}(\smash{\smash{\overline{\R}}^{n+1}},F)$, a function $U\in \smash{W^{1,p}_{\Gamma,\alpha}}(\smash{\smash{\overline{\R}}^{n+1}},F)$ with bounded support can be approximated in $\smash{W^{1,p}_{0,\alpha}}(\smash{\smash{\overline{\R}}^{n+1}},F)$ by $ C^\infty_c(\smash{\smash{\overline{\R}}^{n + 1}_+},F)$ functions. Indeed, for a mapping $U\in \smash{W^{1,p}_{\Gamma,\alpha}}(\smash{\smash{\overline{\R}}^{n+1}},F)$ with bounded support, we have
\begin{equation*}
	\Norm{U}_{\smash{W^{1,p}_{\Gamma,\alpha}}(\smash{\smash{\overline{\R}}^{n+1}},F)} \leq (1+\Norm{\Gamma}_{L^\infty(\Supp U)}) \Norm{U}_{\smash{W^{1,p}_{0,\alpha}}(\smash{\smash{\overline{\R}}^{n+1}},F)}.
\end{equation*}
It follows that $\smash{W^{1,p}_{\Gamma,\alpha}}(\smash{\smash{\overline{\R}}^{n+1}},F)$ functions with bounded support can be approximated in the space $\smash{W^{1,p}_{\Gamma,\alpha}}(\smash{\smash{\overline{\R}}^{n+1}},F)$ by smooth and compactly supported functions from the space $C_c^\infty(\smash{\smash{\overline{\R}}^{n + 1}_+},F)$. The conclusion follows by a diagonal argument.
\end{proof}

Regarding the fractional gauge-covariant Sobolev space \(\smash{W^{s,p}_{\Gamma_{\shortparallel}}}(\R^n,F)\), we have
\begin{lemma}\label{density in Wsp A'}
Let \(0<s<1\) and \(1 \leq p < +\infty\). If \(\Gamma_\shortparallel \in C^{0}(\R^n,\Lin{\R^n}{\mathfrak {o}(F)})\), then \( C^\infty_c( \R^n,F)\) is dense in \(W^{s,p}_{\Gamma_\shortparallel}(\R^n,F)\).
\end{lemma}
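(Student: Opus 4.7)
The plan is to adapt the proof of \cref{density in WA} and extend the magnetic argument of \cite{Nguyen_VanSchaftingen_2020}*{\S 4} to the non-abelian setting: truncate the function via a cutoff, compare the gauge-covariant Gagliardo seminorm with its trivial counterpart on bounded supports, apply the classical density of $C^\infty_c$ in $W^{s,p}$, and conclude via a diagonal argument.

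For the truncation step, fix $\chi \in C^\infty_c(\R^n, \R)$ with $0 \le \chi \le 1$ and $\chi \equiv 1$ on $B_1$, set $\chi_\lambda = \chi(\cdot/\lambda)$, and use the decomposition
\begin{equation*}
\begin{split}
 &\chi_\lambda\brk{x} u\brk{x} - R^{\Gamma_\shortparallel}\brk{x,y} \chi_\lambda\brk{y} u\brk{y}\\
 &\quad = \chi_\lambda\brk{x}\bigl(u\brk{x} - R^{\Gamma_\shortparallel}\brk{x,y} u\brk{y}\bigr) + \bigl(\chi_\lambda\brk{x} - \chi_\lambda\brk{y}\bigr) R^{\Gamma_\shortparallel}\brk{x,y} u\brk{y}.
\end{split}
\end{equation*}
The first term converges to zero in $L^p(\R^n\times\R^n, |x-y|^{-n-sp}\dd x \dd y)$ by Lebesgue's dominated convergence (since $1-\chi_\lambda \to 0$ pointwise and is dominated). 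The second is controlled by splitting the double integral on $\{|x-y| \le \lambda\}$ and $\{|x-y|>\lambda\}$: using the Lipschitz bound $|\chi_\lambda\brk{x} - \chi_\lambda\brk{y}| \le \Norm{\nabla\chi}_\infty |x-y|/\lambda$ in the first region (together with integrability of $|z|^{p(1-s)-n}$ near the origin since $s<1$) and the trivial bound $|\chi_\lambda\brk{x}-\chi_\lambda\brk{y}|\le 2$ in the second (together with integrability of $|z|^{-n-sp}$ at infinity since $s>0$), one obtains a bound of order $\lambda^{-sp}\Norm{u}_{L^p}^p \to 0$.

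For the comparison and mollification steps, integrating the ODE \eqref{def: Pt} along the segment $\gamma_{x,y}$ together with \cref{lemma: Pt isometry} yields
\begin{equation*}
 \Norm{R^{\Gamma_\shortparallel}\brk{x, y} - \id_F} \le |x - y|\, \Norm{\Gamma_\shortparallel}_{L^\infty\brk{K}}
\end{equation*}
for every compact $K \subseteq \R^n$ containing the segment $[x,y]$. For $u$ supported in $B_M$, splitting the double Gagliardo integral at $\{|x-y|\le 1\}$ and applying the triangle inequality to compare $u\brk{x} - R^{\Gamma_\shortparallel}\brk{x,y} u\brk{y}$ with $u\brk{x}-u\brk{y}$ in both directions yields the equivalence
\begin{equation*}
 \Norm{u}_{W^{s,p}_{\Gamma_\shortparallel}(\R^n,F)} \sim \Norm{u}_{W^{s,p}(\R^n,F)} + \Norm{u}_{L^p(\R^n,F)}
\end{equation*}
on the subspace of functions supported in $B_M$. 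Classical density of $C^\infty_c(\R^n, F)$ in $W^{s,p}(\R^n,F)$ then completes the approximation on bounded supports, and a standard diagonal argument with the truncation step gives the full result.

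The main technical obstacle is the control of the commutator-type term $\bigl(\chi_\lambda\brk{x} - \chi_\lambda\brk{y}\bigr) R^{\Gamma_\shortparallel}\brk{x,y} u\brk{y}$ in the truncation, which has no analogue in the standard trivial-connection setting; its control crucially uses the exponents $0<s<1$ and $p\ge 1$ through the integrability of $|z|^{p(1-s)-n}$ near the origin and of $|z|^{-n-sp}$ at infinity.
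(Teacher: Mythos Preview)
Your approach is correct and essentially identical to the paper's: truncate with a cutoff, compare the gauge-covariant and trivial Gagliardo seminorms on compact supports via the Lipschitz bound \(\norm{R^{\Gamma_\shortparallel}(x,y)-\id_F}\le \norm{x-y}\Norm{\Gamma_\shortparallel}_{L^\infty(K)}\), invoke classical density in \(W^{s,p}\), and finish by a diagonal argument. Two small corrections: your displayed decomposition is written for \(\chi_\lambda u\) rather than \((1-\chi_\lambda)u\) (as your own reference to ``\(1-\chi_\lambda\to 0\)'' shows you intended); and your final remark is slightly off, since the commutator term \((\chi_\lambda(x)-\chi_\lambda(y))R^{\Gamma_\shortparallel}(x,y)u(y)\) is controlled exactly as in the trivial-connection case because \(R^{\Gamma_\shortparallel}(x,y)\) is an isometry --- the genuinely new ingredient here is the seminorm comparison step, not the truncation. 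The paper uses the symmetrized splitting \((1-\tfrac{\chi_\lambda(x)+\chi_\lambda(y)}{2})(u(x)-Ru(y)) - \tfrac{\chi_\lambda(x)-\chi_\lambda(y)}{2}(u(x)+Ru(y))\), but your asymmetric one works just as well.
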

\begin{proof}
Consider $\chi \in C_c^\infty(\R^n,\R)$ such that $0\leq \chi \leq 1$ and $\chi =1$ on the unit ball $B_1$. For $\lambda > 0$, we define \(
\chi_\lambda \colon \R^n \to \R\)
by \(\chi_\lambda\brk{x} = \chi(x/\lambda)\).
Let $u\in \smash{W^{s,p}_{\Gamma_{\shortparallel}}}(\R^n,F)$. Then, for all $\lambda > 0$ and for all $x$, $y\in\R^n$, we have
\begin{multline}\label{densite wsp eq1}
(1-\chi_\lambda\brk{x})u\brk{x} - (1-\chi_\lambda\brk{y})R^{\Gamma_\shortparallel}\brk{x, y}u\brk{y} \\
=\biggl(1- \frac{\chi_\lambda\brk{x}+\chi_\lambda\brk{y}}{2}\biggr)
\brk{u\brk{x}-R^{\Gamma_\shortparallel}\brk{x, y}u\brk{y}}
- \frac{\chi_\lambda\brk{x}-\chi_\lambda\brk{y}}{2}
\brk{u\brk{x} + R^{\Gamma_\shortparallel}\brk{x, y}u\brk{y}}.
\end{multline}
Furthermore, for all $y\in\R^n$ and $\lambda > 0$, we have
\begin{equation}\label{densite wsp eq2}
\int_{\R^n} \frac{| \chi_\lambda\brk{x} - \chi_\lambda\brk{y}|^p}{\norm{x - y}^{n+sp}}\dd x \leq \frac{C}{\lambda^{sp}}
\end{equation}
for some constant $C>0$. It follows from \eqref{densite wsp eq1} and \eqref{densite wsp eq2} that, for every $\lambda > 0$,
\begin{multline}\label{densite wsp eq3}
| \chi_\lambda u - u|^p_{\smash{W^{s,p}_{\Gamma_{\shortparallel}}}(\R^n,F)} \\
\leq C \int_{\R^n}\int_{\R^n} \biggl(1-\frac{\chi_\lambda\brk{x}+\chi_\lambda\brk{y}}{2} \biggr)^p \frac{| u\brk{x} - R^{\Gamma_\shortparallel}\brk{x, y} u\brk{y} |^p}{\norm{x - y}^{n+sp}}\dd y \dd x + C^\prime \int_{\R^n} \frac{\norm{u\brk{x}}^p}{\lambda^{sp}} \dd x,
\end{multline}
where $C$, $C^\prime > 0$ are two constants. We deduce from \eqref{densite wsp eq3} and Lebesgue's dominated convergence theorem that 
\begin{equation*}
|u- \chi_\lambda u|_{W^{s,p}_{{\Gamma_{\shortparallel}}}(\R^n,F)} \to 0, \quad\lambda \to +\infty
\end{equation*}
which implies that $\Norm{ u - \chi_\lambda u}_{\smash{W^{s,p}_{\Gamma_{\shortparallel}}}(\R^n,F)} \to 0$ as $\lambda \to +\infty$. In other words, $\smash{W^{s,p}_{\Gamma_{\shortparallel}}}(\R^n,F)$ functions with bounded support are dense in $\smash{W^{s,p}_{\Gamma_{\shortparallel}}}(\R^n,F)$.

Since ${\Gamma_{\shortparallel}}$ is locally bounded on $\R^n$, any function in $\smash{W^{s,p}_{\Gamma_{\shortparallel}}}(\R^n,F)$ whose support is bounded is also in the trivial fractional gauge-covariant Sobolev space $W^{s,p}_0(\R^n,F)=W^{s,p}(\R^n,F)$. Indeed, if $u \in L^p(\R^n,F)$ has bounded support, then, by the triangle inequality, we have
\begin{multline}\label{1234}
\int_{\R^n}\int_{\R^n} \norm[\bigg]{\frac{|u\brk{x} - R^{\Gamma_\shortparallel}\brk{x, y}u\brk{y}|^p}{\norm{x - y}^{n+sp}} - \frac{|u\brk{x}-u\brk{y}|^p}{\norm{x - y}^{n+sp}}} \dd x \dd y \\
\leq C \int_{\R^n} \int_{\R^n} \frac{|u\brk{x} -  R^{\Gamma_\shortparallel}\brk{x, y}u\brk{y}|^p}{\norm{x - y}^{n+sp}} \dd x \dd y \\
\leq \sup_{y\in \Supp u} \biggl(\int_{\R^n} \frac{|\id_F  - R^{\Gamma_\shortparallel}\brk{x, y}|^p}{\norm{x - y}^{n+sp}}\dd x\biggr)\int_{\R^n} | u \brk{y}|^p \dd y.
\end{multline}
Since the parallel transport is an isometry, we have in the operator norm
\begin{equation}\label{<=2}
\norm{\id_F - R^{\Gamma_\shortparallel}(x,y)} \leq
+ \norm{\id_F} + \norm{R^{\Gamma_\shortparallel}(x,y)}
= 
2.
\end{equation}
By \eqref{def: Pt}, \eqref{eq: def R}
and by the fundamental theorem of calculus, we have
\begin{equation}\label{101}
\begin{split}
\norm{\id_F - R^{\Gamma_\shortparallel}\brk{x, y}}
&= \biggl|\int_0^1 -{\Gamma_{\shortparallel}}(\brk{1 - t} x + t y)[y-x]R^{\Gamma_\shortparallel}(\brk{1 - t} x + t y,y) \dd t \biggr|\\
&\leq \| {\Gamma_{\shortparallel}} \|_{L^\infty([x, y])} |x-y|.
\end{split}
\end{equation}
It follows from \eqref{<=2} and \eqref{101} that
\begin{multline}\label{RTG}
 \sup_{y\in \Supp u}\int_{\R^n} \frac{ \norm{\id_F - R^{\Gamma_\shortparallel}\brk{x, y}}^p}{\norm{x - y}^{n+sp}}\dd x \\
 \leq C \sup_{y \in \Supp u}\int_{\R^n} \frac{\min\bigl\{1 , \norm{x - y}^p \|{\Gamma_{\shortparallel}}\|_{L^\infty(\Supp u)}^p\bigr\}}{\norm{x - y}^{n+sp}}\dd x < + \infty.
\end{multline}
Combining \eqref{1234} and \eqref{RTG}, we deduce that if $u\in L^p(\R^n, F)$ has compact support, then
\begin{equation*}
u \in \smash{W^{s,p}_{\Gamma_{\shortparallel}}}(\R^n ,F) \iff u \in W_0^{s,p}(\R^n,F).
\end{equation*}
Since the set $C^\infty(\R^n,F)$ is dense in $W^{s,p}(\R^n,F)$, see~\cite{Leoni_fractionnal_2023}*{Thm.\ 6.66}, any function $u\in \smash{W^{s,p}_{\Gamma_{\shortparallel}}}(\R^n,F)$ can be approximated in $W^{s,p}_0(\R^n,F)$ by functions with uniformly compact support. Since ${\Gamma_{\shortparallel}}$ is locally bounded, the above calculation shows that the approximating sequence also converges in $\smash{W^{s,p}_{\Gamma_{\shortparallel}}}(\R^n,F)$, and the conclusion follows by a diagonal argument.
\end{proof}

Combining \cref{prop traces,prop extension} together with \cref{density in WA,density in Wsp A'}, we obtain the non-abelian counterpart of \cite{Nguyen_VanSchaftingen_2020}*{Thm.\ 4.3} which is the extension of the trace theorem for weighted Sobolev spaces~\cite{Uspenskii_1961} (see also~\citelist{\cite{Leoni_fractionnal_2023}*{\S 9.1}\cite{Mironescu_Russ_2015}}) to the magnetic case.
If \(1 < p < +\infty\) and \(s = 1- \frac{1}{p}\), \cref{thm: weighted traces} is \cref{trace thm on Rn} in the introduction.
\begin{theorem}
	\label{thm: weighted traces}
	Let \(n \geq 1\), \(0 < s < 1\) and \(1\leq p < +\infty\). If \(\Gamma \in C^1(\smash{\smash{\overline{\R}}^{n+1}},\Lin{\R^{n+1}}{\mathfrak{o}(F)}\) and if \(\Norm{\mathscr{K}_\Gamma}_\infty \leq \beta\), then there exists a trace mapping \[\Tr\colon {W^{1,p}_{\Gamma,(1-s)p-1}}(\smash{\overline{\R}}^{n+1}, F) \to W^{s,p}_{\Gamma_\shortparallel}(\R^n, F)\]
	and a positive constant \(C = C(n,s,p)\) depending only on \(n\), \(s\) and \(p\) such that if \(U \in C^1_c(\smash{\overline{\R}}^{n+1}, F)\), then \(\Tr U = U(\cdot,0)\) and for every \(U \in \smash{W^{1,p}_{\Gamma,(1-s)p-1}}(\smash{\smash{\overline{\R}}^{n+1}}, F)\), if \(u = \Tr U\),
	\begin{equation*}
		\norm{u}^p_{W^{s,p}_{\Gamma_\shortparallel}(\R^n,F)} + \beta^{sp/2}\Norm{u}^p_{L^p(\R^n,F)} \leq C \int_{\R^{n+1}_+} \frac{\norm{\Diff_\Gamma U(z)}^p + \beta^{p/2} \norm{U(z)}^p}{z_{n + 1}^{1-(1-s)p}} \dd z.
	\end{equation*} 
	Furthermore, there exists a linear continuous extension mapping \[\Ext \colon W^{s,p}_{\Gamma_\shortparallel}(\R^n, F) \to \smash{W^{1,p}_{\Gamma,(1-s)p-1}(\smash{\smash{\overline{\R}}^{n+1}}, F)}\] and a positive constant \(C= C(n,s,p)\) depending only on \(n\), \(s\) and \(p\) such that \(\Tr \circ \Ext\) is the identity on \(W^{s,p}_{\Gamma_\shortparallel}(\R^n,F)\) and such that  for each \(u \in W^{s,p}_{\Gamma_\shortparallel}(\R^n, F)\), if \(U = \Ext u\),
	\begin{equation*}
		\int_{\R^{n+1}_+} \frac{\norm{\Diff_\Gamma U(z)}^p + \beta^{p/2}\norm{U(z)}^p}{z_{n + 1}^{1-(1-s)p}} \dd z \leq C \Bigl(\norm{u}^p_{W^{s,p}_{\Gamma_\shortparallel}(\R^n, F)} + \beta^{sp/2} \Norm{U}_{L^p(\R^n,F)} \Bigr).
	\end{equation*}
\end{theorem}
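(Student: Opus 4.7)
The plan is to combine the trace estimate of \cref{prop traces} with the extension construction of \cref{prop extension}, using the density of smooth compactly supported functions provided by \cref{density in WA,density in Wsp A'} to pass from pointwise definitions on a dense subspace to continuous operators on the full Banach spaces.

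First, I would establish the trace inequality on the dense subspace $C^\infty_c(\smash{\overline{\R}}^{n + 1}_+, F)$. \Cref{prop traces} gives the Gagliardo seminorm bound \eqref{prop traces eq1} directly. To produce the companion $\beta^{sp/2}\Norm{u}^p_{L^p(\R^n, F)}$ term, I would apply the weighted Young inequality in the form $A^{1-s}(\beta^{p/2} B)^s \le (1-s) A + s \beta^{p/2} B$ to the estimate \eqref{prop traces eq2}, where $A$ and $B$ denote the two weighted integrals on its right-hand side. Summing the two resulting bounds yields exactly the trace estimate of the theorem on smooth functions. Since $(1-s)p - 1 \in (-1, p-1)$ when $0 < s < 1$, \cref{density in WA} then applies and the linear map $U \mapsto U(\cdot, 0)$, being continuous for the prescribed norms on the dense subspace $C^\infty_c(\smash{\overline{\R}}^{n + 1}_+, F)$, extends uniquely by continuity to the whole space $W^{1,p}_{\Gamma, (1-s)p-1}(\smash{\overline{\R}}^{n+1}, F)$.

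For the extension operator, \cref{prop extension} provides, for each $u \in C^1_c(\R^n, F)$, a linear continuous assignment $u \mapsto U = \Ext u$ satisfying $U(\cdot, 0) = u$ together with \eqref{prop extension eq:1} and \eqref{prop extension eq:2}. Multiplying \eqref{prop extension eq:2} by $\beta^{p/2}$ converts its right-hand side into $C\beta^{sp/2}\Norm{u}^p_{L^p}$; summing with \eqref{prop extension eq:1} then produces the extension estimate of the theorem. By \cref{density in Wsp A'}, the space $C^\infty_c(\R^n, F)$ is dense in $W^{s, p}_{\Gamma_\shortparallel}(\R^n, F)$, so $\Ext$ extends uniquely by continuity to the full fractional space. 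The identity $\Tr \circ \Ext = \id$ holds by construction on $C^\infty_c(\R^n, F)$, since the pointwise boundary value of $\Ext u$ equals $u$, and it passes to the whole fractional space by continuity of both operators.

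Since the deep analytic work --- the holonomy control of \cref{estimate holonomy triangle}, the path-averaging argument of \cref{prop traces}, and the parallel-transport-twisted mollification of \cref{prop extension} --- has already been done, there is no genuine analytic obstacle remaining. The only point requiring a little care is the algebraic combination of the separate estimates: producing the $\beta^{sp/2}\Norm{u}^p_{L^p}$ term on the trace side from the multiplicative inequality \eqref{prop traces eq2} via the weighted Young inequality, and producing the $\beta^{p/2}$-weighted $L^p$-integral on the extension side by multiplying \eqref{prop extension eq:2} by $\beta^{p/2}$, as outlined above.
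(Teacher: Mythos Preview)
Your proposal is correct and follows essentially the same approach as the paper, which simply states the theorem as a combination of \cref{prop traces,prop extension} with the density results \cref{density in WA,density in Wsp A'}. Your explicit handling of the algebraic combination---Young's inequality on \eqref{prop traces eq2} to produce the $\beta^{sp/2}$ factor on the trace side, and multiplication of \eqref{prop extension eq:2} by $\beta^{p/2}$ on the extension side---fills in exactly the details the paper leaves implicit.
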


\section{Traces and extensions on a manifold}\label{sect: geometric tools}

In this section we combine the results of the previous sections to characterise the traces on the boundary of a Riemannian vector bundle.
Following the strategy of \cite{Nguyen_VanSchaftingen_2020}*{\S 6}, we consider local trivialising charts of \(\mathcal{E}\) on which the local results on the trace and extension (\cref{prop: trace estimates local version,prop: extension local version}) are applicable and glue them back together. Throughout this section, we consider a smooth and compact Riemannian manifold \(\mathcal{M}\) with boundary \(\partial \mathcal{M}\).

A smooth manifold \(\mathcal{E}\) together with a smooth map \(\pi_\mathcal{E} \colon \mathcal{E} \to \mathcal{M}\) is called a \emph{Riemannian vector bundle} above the smooth manifold \(\mathcal{M}\) with the Euclidean space \(F\) as model fibre whenever \(\mathcal{E}\) is locally of the form \(\mathcal{M}  \times F\) in the following sense.
First, for each \(x \in \mathcal{M}\), the \emph{fibre} \(\mathcal{E}_x = \pi_\mathcal{E}^{-1}(\{x\})\) above each \(x\) is a closed submanifold of \(\mathcal{E}\) and is endowed with a Euclidean  structure isometric to \(F\).
Next, there exists an open covering \(\{V_\alpha\}_{\alpha \in \mathbf{A}}\) of \(\mathcal{M}\) and smooth diffeomorphisms \(\{\varphi_\alpha \colon \pi_\mathcal{E}^{-1}(V_\alpha) \to V_\alpha \times F\}_{\alpha \in \mathbf{A}}\) such that any restriction to a fibre \(\varphi_{\alpha \vert_{\mathcal{E}_x}} \colon \mathcal{E}_x \to \{x\}\times F\) for \(x\in V_\alpha\) is an isometry.
Such a tuple \((V_\alpha, \varphi_\alpha)\) is called a \emph{local trivialisation} or \emph{bundle chart} of \(\mathcal{E}\).

Typical examples of Riemannian vector bundles include trivial bundles \(\mathcal E = \mathcal{M}\times F\) as well as the tangent bundle \(\mathcal E = T\mathcal{M}\) of a Riemannian manifold \(\mathcal{M}\) or the normal bundle of an embedded Riemannian manifold.

If \((V_\alpha, \varphi_\alpha)\) and \((V_\beta, \varphi_\beta)\) are two intersecting isometric trivialisations of a Riemannian bundle \(\mathcal{E}\), that is, \(V = V_\alpha \cap V_\beta \neq \varnothing\), one can consider the \emph{change of trivialisation} or \emph{change of gauge} \(\varphi_\beta \circ \varphi_\alpha^{-1}\) from \(V_\alpha\) to \(V_\beta\). The smooth, fibre-wise linear and isometric characteristics of the trivialisations \(\varphi_\alpha\) and \(\varphi_\beta\) imply that there exists a unique smooth function \(\phi_{\beta,\alpha} \colon V \to O(F)\) such that for all \((x,v) \in V \times F\) 
\begin{equation*}
	\bigl(\varphi_\beta \circ \varphi_\alpha^{-1}\bigr)(x,v)  = (x, \phi_{\beta,\alpha}\brk{x} v).
\end{equation*}
The mapping \(\phi_{\beta,\alpha} \colon V \to O(F)\) is called the \emph{gauge transformation} from \((V_\alpha, \varphi_\alpha)\) to \((V_\beta, \varphi_\beta)\). In the particular case of the tangent bundle \(T\mathcal{M}\) of a Riemannian manifold \(\mathcal{M}\), the gauge transformations are the differentials of the change of charts of the manifold \(\mathcal{M}\). In settings of electromagnetism in quantum physics, these gauge transformations are phase shifts and take the form \(\e^{\ii \theta} \colon V \to U(1) \cong SO(2)\) for some smooth scalar function \(\theta\colon V \to \R\).

A \emph{connection} \(K\) on \(\mathcal{E}\) is given by a covariant derivative \(\Diff_K\), which is a differential operator such that for every local trivialisation \(\varphi \colon \pi_{\mathcal{E}}^{-1}(V) \to V \times F\) there is a unique section \(\Gamma \in  C (V, \Lin{T \mathcal{M}}{\mathfrak{o}(F)})\), called the \emph{connection form}, such that
one has 
\begin{equation}
\label{eq_Na2shee2oTienoh8ooShae2S}
  \varphi \brk{\Diff_K U} = \Diff_{\Gamma} \brk{\varphi \circ U} 
  = \Diff \brk{\varphi \circ U}
 + \Gamma \brk{\varphi \circ U}.
\end{equation}
If \(\varphi' \colon \pi_{\mathcal{E}}^{-1}(V) \to V \times F\) is another local trivialisation and \(\Gamma' \in  C (V, \Lin{T \mathcal{M}}{\mathfrak{o}(F)})\) is the associated connection form, we should have the gauge-invariance
\[
\begin{split}
 \Diff (\varphi' \circ U)
 + \Gamma' \brk{\varphi' \circ U}
 &= (\varphi'\circ \varphi^{-1})
 (\Diff (\varphi \circ U)
 +\Gamma \brk{\varphi \circ U})\\
 &= \Diff (\varphi' \circ U) + \bigl( (\varphi'\circ \varphi^{-1})\Gamma - \Diff (\varphi'\circ \varphi^{-1})\bigr)(\varphi'\circ \varphi^{-1})^{-1} \brk{\varphi' \circ U}.
\end{split}
\]
This will be the case provided
\[
  \Gamma' = (\varphi'\circ \varphi^{-1})\Gamma(\varphi'\circ \varphi^{-1})^{-1} - \Diff (\varphi'\circ \varphi^{-1})(\varphi'\circ \varphi^{-1})^{-1},
\]
that is, \eqref{eq: Gamma^prime} holds with the gauge transformation \(\phi  = \varphi'\circ \varphi^{-1}\).
The identity \eqref{eq_Na2shee2oTienoh8ooShae2S} still holds for 
\(U\) belonging to the gauge-covariant Sobolev space \(W^{1,p}_K(\mathcal{M}, \mathcal E)\) defined as the set of sections \(U \in W^{1, 1}_{\mathrm{loc}} (\mathcal{M}, \mathcal{E})\) such that
\[
 \int_{\mathcal{M}} \norm{\Diff_K U}^p + \norm{U}^p < +\infty.
\]
\begin{rmq}
When \(\mathcal E = \smash{\smash{\overline{\R}}^{n+1}} \times F\) is a trivial bundle and \(\Gamma\) represents the covariant derivative in a global trivialisation, we have 
\begin{equation*}
	W^{1,p}_K(\smash{\smash{\overline{\R}}^{n+1}}, \mathcal E) = W^{1,p}_\Gamma(\smash{\smash{\overline{\mathcal \R}}^{n+1}}, F)
\end{equation*} 
where the right-hand side is the space considered in the previous sections and defined by \eqref{def: W1p_Gamma}.
\end{rmq}
If \(\psi \colon W \subseteq \R^{n + 1}_+ \to \Omega \subset \mathcal{M}\) is a diffeomorphism and \(\Omega\) a local trivialisation,
the associated connection form \(\Gamma\) can be \emph{pulled-back} by \(\psi\) to obtain a connection form \(\psi^\ast \Gamma\) on \(W\) which is defined by~\cites{Sontz_bundle, JohnLee_2012, JeffreyLee_2009, Kobayashi_Nomizu_1963}  
\begin{equation}
	\label{def: pullback Gamma}
\psi^\ast\Gamma\brk{x}[\cdot]
= \Gamma\brk{\psi\brk{x}} \circ \Diff \psi\brk{x} [\cdot] \in \Lin{\R^{n+1}}{\mathfrak{o}(F)}
\end{equation}
for every \(x\in W\). 
Similarly, when \(U \in W^{1,p}_\Gamma(\Omega, F)\), one can pull-back the weak covariant derivative as
\begin{equation*}
\psi^\ast\brk{\Diff_\Gamma U}\brk{x} = \brk{\Diff_{\Gamma} U} \brk{\psi\brk{x}} \circ \Diff \psi\brk{x}.
\end{equation*} 
and obtain (\cref{lemma: pull-back of connection})
\begin{equation}
\label{eq_Ohjae8aenee6jo4aimusef0w}
  \Diff_{\psi^\ast \Gamma} (U \circ \psi)
  = \psi^\ast \brk{\Diff_\Gamma U}\brk{x}
\end{equation}
In view of \eqref{eq_kifaelaer1feiThohV1aesh2} and \eqref{eq_Ohjae8aenee6jo4aimusef0w}, the pull-back is compatible with the curvature: 
\begin{equation}
  \mathscr{K}_{\psi^\ast \Gamma}
  = \psi^\ast  \mathscr{K}_{\Gamma},
\end{equation}
where 
\[
 \brk{\psi^\ast  \mathscr{K}_{\Gamma}} (x)\sqb{v, w}
 = \mathscr{K}_{\Gamma} \brk{\psi\brk{x}}\sqb{\Diff\psi \brk{x}\sqb{v}, \Diff \psi \brk{x}\sqb{w}}.
\]
Also, since the pull-back acts on the domain side while gauge transformations work on the target side, pull-backs and gauge transformations commute.

The following lemma implies that pulling-back the connection form or the weak covariant derivative yields the same result. 
\begin{lemma}\label{lemma: pull-back of connection}
	Let \(W \subset \R^{n+1}_+\) be an open and bounded domain and let \(\psi\colon \overline W \to \Omega\) be a diffeomorphism. 
	Then
	\begin{equation*}
		U\circ \psi \in W^{1,p}_{\psi^\ast \Gamma }(W,F)\iff  U \in  W^{1,p}_\Gamma(\Omega,F)
	\end{equation*}
	and for almost every \(x \in W\)
	\begin{equation*}
		\Diff_{\psi^\ast \Gamma }(U \circ \psi)\brk{x} = (\psi^\ast (\Diff_\Gamma  U))\brk{x}.
	\end{equation*}
	Consequently, for almost every \(x\in W\),
	\begin{equation*}
		\frac{|\Diff_\Gamma  U\brk{\psi\brk{x}} |}{\|\Diff (\psi^{-1})\|_{\infty}} \leq |\Diff_{\psi^\ast \Gamma }(U\circ \psi)\brk{x} | \leq \| \Diff\psi\|_\infty |\Diff_\Gamma  U\brk{\psi\brk{x}} |.
	\end{equation*}
\end{lemma}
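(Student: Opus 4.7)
The plan is to derive everything from the chain rule, applied pointwise in the smooth setting and then extended to the weak setting. Since $\psi$ is a smooth diffeomorphism of the compact set $\overline{W}$ (and hence bi-Lipschitz), the chain rule for Sobolev functions under a smooth change of variables gives, for almost every $x \in W$ and every $v \in \R^{n+1}$,
\begin{equation*}
\Diff(U \circ \psi)(x)[v] = \Diff U(\psi(x))[\Diff \psi(x)[v]].
\end{equation*}
Combining this with the definition \eqref{def: pullback Gamma} of $\psi^\ast\Gamma$ and the definition \eqref{eq_def_weak_DGamma} of the covariant derivative yields
\begin{equation*}
\Diff_{\psi^\ast\Gamma}(U \circ \psi)(x)[v] = \Diff U(\psi(x))[\Diff\psi(x)[v]] + \Gamma(\psi(x))[\Diff\psi(x)[v]]\, U(\psi(x)) = \Diff_\Gamma U(\psi(x))[\Diff\psi(x)[v]],
\end{equation*}
which is exactly $(\psi^\ast(\Diff_\Gamma U))(x)[v]$. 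This establishes the central pointwise identity.

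The two-sided norm estimate then follows by viewing $\Diff_{\psi^\ast\Gamma}(U \circ \psi)(x)$ as the composition of $\Diff_\Gamma U(\psi(x)) \in \Lin{\R^{n+1}}{F}$ with $\Diff\psi(x) \in \Lin{\R^{n+1}}{\R^{n+1}}$. The Hilbert--Schmidt norm is submultiplicative with respect to composition with a linear map on the domain, namely $|A \circ B| \le |A| \, \|B\|_{\mathrm{op}}$, which applied directly gives the upper bound. For the lower bound I would write $\Diff_\Gamma U(\psi(x)) = \Diff_{\psi^\ast\Gamma}(U \circ \psi)(x) \circ \Diff\psi(x)^{-1}$ and apply the same inequality, using the standard identity $\Diff\psi(x)^{-1} = \Diff(\psi^{-1})(\psi(x))$ so that the operator norm of the inverse is bounded by $\|\Diff(\psi^{-1})\|_\infty$.

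For the Sobolev-space equivalence, the change of variable $y = \psi(x)$ transfers $L^p$-integrability: since $\psi$ is a diffeomorphism of $\overline{W}$, the Jacobian $|\det \Diff\psi|$ is bounded above and bounded away from zero, so $U \in L^p(\Omega,F)$ if and only if $U\circ\psi \in L^p(W,F)$ with equivalent norms. Combined with the pointwise derivative bounds, the same change of variable transfers $L^p$-integrability of $\Diff_\Gamma U$ on $\Omega$ to $L^p$-integrability of $\Diff_{\psi^\ast\Gamma}(U\circ\psi)$ on $W$ and vice versa, giving the stated equivalence.

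The only delicate point is justifying the chain rule at the level of weak derivatives for $U$ merely in $W^{1,1}_{\mathrm{loc}}$; this is classical given the smoothness and bi-Lipschitz character of $\psi$ on $\overline{W}$, and can be obtained by approximating $U$ by smooth maps (convolution on compactly contained subdomains) and passing to the limit in both sides of the identity, which is the step I expect to require the most care.
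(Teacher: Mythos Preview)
Your proposal is correct and follows essentially the same route as the paper's proof: the chain rule yields the identity \(\Diff_{\psi^\ast\Gamma}(U\circ\psi)(x) = \Diff_\Gamma U(\psi(x))\circ\Diff\psi(x)\), and the two-sided norm bound comes from submultiplicativity applied to this composition and to its inverse form. You in fact spell out more than the paper does (the Jacobian argument for the \(L^p\) equivalence and the approximation step for the weak chain rule), but the core argument is identical.
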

\begin{proof}
	By the chain rule, we have for almost every \(x \in W\)
	\begin{equation}\label{eq: pullback1}
	\begin{split}
		\Diff_{\psi^\ast \Gamma }(U\circ \psi)\brk{x} 
		&= \Diff U\brk{\psi\brk{x}} + \psi^\ast\Gamma \brk{x} U\brk{\psi\brk{x}}\\
		&=\Diff_\Gamma  U \brk{\psi\brk{x}} \circ \Diff \psi\brk{x} = \psi^\ast (\Diff_\Gamma  U)\brk{x}.
	\end{split}
	\end{equation}
	Since \(\psi\) is a diffeomorphism up to the boundary, it follows that 
	\begin{equation} \label{eq: pullback2}
		| \Diff_{\psi^\ast \Gamma }(U\circ \psi)\brk{x} | \leq \|\Diff \psi\|_\infty| \Diff_\Gamma  U (\psi\brk{x} ) |
	\end{equation}
	and 
	\begin{equation}\label{eq: pullback3}
	\begin{split}
		|\Diff_\Gamma  U(\psi\brk{x} ) | 
		&= |\Diff_{\psi^\ast \Gamma } (U\circ \psi)\brk{x} \circ \Diff (\psi^{-1})\brk{\psi\brk{x}} | \\
		&\leq \| \Diff (\psi^{-1})\|_\infty|\Diff_{\psi^\ast \Gamma }(U \circ \psi)\brk{x} |.
	\end{split}
	\end{equation}
	The conclusion follows from \eqref{eq: pullback1}, \eqref{eq: pullback2} and \eqref{eq: pullback3}.
\end{proof}

In order to define the trace space, we consider a bundle \(\mathcal E\) over \(\partial \mathcal{M}\) and a diffeomorphism on its image \(\psi \colon V \to \partial \mathcal{M}\) such that \(\mathcal E\) is trivial above \(\psi(V)\).
Then, given a continuous connection \(K_\shortparallel\) on \(\mathcal E\) which is locally encoded in \(\psi(V)\) by the connection form \(\Gamma_\shortparallel\) and given \(\gamma \in C^1 \brk{[0, 1], \psi\brk{V}}\) we can consider the parallel transport \(\Pt_{\tilde\gamma}^{\psi^* \Gamma_\shortparallel} \brk{t}\) along the path \(\tilde\gamma = \psi^{-1} \circ \gamma\) defined by \eqref{def: Pt}. It turns out that this operator does not depend on \(\psi\).
\begin{lemma}
	\label{lemma: Pt on manifold is well-defined}
	Let \(\Omega \subset \partial\mathcal{M}\) be a local trivialisation with a continuous connection form \(\Gamma_\shortparallel\). If \(\psi_i \colon V_i \to \Omega\), \(i= 1\), \(2\),  are two diffeomorphisms and \(\gamma \in C^1\brk{\sqb{0,1}, \Omega}\), then, for all \(t\in \sqb{0,1}\),
	\begin{equation*}
		\Pt^{\psi_1^\ast \Gamma_\shortparallel}_{{\tilde\gamma}_1}(t) = \Pt^{\psi_2^\ast \Gamma_\shortparallel}_{\tilde{\gamma}_2}(t),
	\end{equation*}
	where \({\tilde \gamma}_i = \psi_i^{-1} \circ \gamma\), \(i=1\), \(2\).
\end{lemma}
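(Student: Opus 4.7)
The plan is to observe that both parallel transport operators $\Pt^{\psi_1^\ast \Gamma_\shortparallel}_{\tilde\gamma_1}$ and $\Pt^{\psi_2^\ast \Gamma_\shortparallel}_{\tilde\gamma_2}$ satisfy, after unwinding the pull-back, the \emph{same} linear ordinary differential equation along $\gamma$ with the same final condition at $t = 1$. The conclusion will then follow by uniqueness of solutions to the associated Cauchy problem, which was the argument already invoked after \eqref{def: Pt}.

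First I would compute, for $i=1,2$ and $t \in [0,1]$, the coefficient appearing in the ODE \eqref{def: Pt} defining $\Pt^{\psi_i^\ast \Gamma_\shortparallel}_{\tilde\gamma_i}$. By the definition of the pull-back \eqref{def: pullback Gamma}, we have
\begin{equation*}
  \brk{\psi_i^\ast \Gamma_\shortparallel}\brk{\tilde\gamma_i\brk{t}}\sqb{\dot{\tilde\gamma}_i\brk{t}}
  = \Gamma_\shortparallel\brk{\psi_i\brk{\tilde\gamma_i\brk{t}}}\sqb{\Diff \psi_i \brk{\tilde\gamma_i\brk{t}}\sqb{\dot{\tilde\gamma}_i\brk{t}}}.
\end{equation*}
Since $\tilde\gamma_i = \psi_i^{-1} \circ \gamma$, the chain rule gives $\psi_i \circ \tilde\gamma_i = \gamma$ and
$\Diff \psi_i \brk{\tilde\gamma_i\brk{t}}\sqb{\dot{\tilde\gamma}_i\brk{t}} = \dot\gamma \brk{t}$. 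Hence
\begin{equation*}
  \brk{\psi_i^\ast \Gamma_\shortparallel}\brk{\tilde\gamma_i\brk{t}}\sqb{\dot{\tilde\gamma}_i\brk{t}}
  = \Gamma_\shortparallel\brk{\gamma\brk{t}}\sqb{\dot\gamma\brk{t}},
\end{equation*}
which is manifestly independent of the chart $\psi_i$.

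Consequently, setting $f_i(t) = \Pt^{\psi_i^\ast \Gamma_\shortparallel}_{\tilde\gamma_i}(t)$, both $f_1$ and $f_2$ satisfy the final value problem
\begin{equation*}
  f'\brk{t} + \Gamma_\shortparallel\brk{\gamma\brk{t}}\sqb{\dot\gamma\brk{t}}\, f\brk{t} = 0
  \quad \text{for } t \in [0,1],
  \qquad f\brk{1} = \id_F,
\end{equation*}
which is a linear ODE in the finite-dimensional space $\Lin{F}{F}$ with continuous coefficients. By the classical uniqueness theorem for such Cauchy problems, $f_1 = f_2$ on $[0,1]$, yielding the claim. I do not expect a genuine obstacle here: the only point requiring care is correctly invoking the chain rule so that the chart-dependent factor $\Diff \psi_i$ is absorbed into $\dot\gamma$, which is exactly the naturality that the pull-back definition \eqref{def: pullback Gamma} was designed to ensure.
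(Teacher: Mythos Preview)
Your proof is correct and follows essentially the same approach as the paper: both use the definition of the pull-back and the chain rule to see that the two parallel transports satisfy the same final value problem, then conclude by uniqueness of solutions. Your version merely spells out the chain-rule computation that the paper leaves implicit.
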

\begin{proof}
	It suffices to observe that both operators satisfy the same final value problem, and we conclude by uniqueness of the solution. This follows from the definition \eqref{def: Pt} of parallel transport as well as the definition of pull-back \eqref{def: pullback Gamma}.
\end{proof}
In view of \cref{lemma: Pt gauge-invariance,lemma: Pt on manifold is well-defined}, \(\Pt_{\tilde\gamma}^{\psi^* \Gamma_{\shortparallel}} \brk{t}\) induces a well-defined operator from \(\mathcal{E}_{\gamma \brk{1}}\) to \(\mathcal{E}_{\gamma\brk{t}}\) that only depends on the connection \(K_\shortparallel\) on \(\partial \mathcal{M}\). We denote this operator by \(\Pt^{K_\shortparallel}_\gamma \brk{t}\). (Even though it will not be need in the sequel, it can be noted that \(\Pt^{K_\shortparallel}_\gamma \brk{t}\) can then be defined beyond a local trivialisation.)

Since \(\partial\mathcal{M}\) is a compact and smooth manifold by assumption, it has a positive injectivity radius \(\mathrm{inj}_{\partial \mathcal{M}}>0\), and if \(x\), \(y\in\partial\mathcal{M}\) are such that \(\operatorname{dist}_{\partial N}\brk{x, y}<\mathrm{inj}_{\partial \mathcal{M}}\), then there exists a unique distance minimising geodesic
\(
	\zeta_{x,y}\colon [0,1] \to \partial\mathcal{M}
\)
with \(\zeta_{x,y}(0)=x\) and \(\zeta_{x,y}(1)=y\), see~\cite{doCarmo}*{Ch.\ 13}.
We then define in the spirit of \eqref{eq: def R} 
\begin{equation}
	\label{eq: def R on manifold}
  R_{\partial\mathcal{M}}^{K_\shortparallel} \brk{x, y}
  = \Pt^{K_\shortparallel}_{\zeta_{x,y}}(0),
\end{equation}
the parallel transport from \(y\) to \(x\) along \(\zeta_{x,y}\).
\begin{definition}
	Let \(n\geq 1\), let \(1 \leq p < +\infty\), let \(0<s<1\) and let \(\mathcal{M}\) be an \((n+1)\)-dimensional smooth and compact manifold with boundary \(\partial\mathcal{M}\). Let \(\mathcal{E}\) be a smooth Riemannian vector bundle over \(\partial \mathcal{M}\) and let \(K_\shortparallel\) be a continuous metric connection on \(\mathcal{E}\). We define the \emph{fractional gauge-covariant Sobolev space} \(W^{s,p}_{K_\shortparallel}(\partial \mathcal{M}, \mathcal{E})\) as the set of all sections \(u\in L^p(\partial \mathcal{M}, \mathcal{E})\) such that the \emph{gauge-covariant Gagliardo seminorm}
	\begin{equation}\label{gauge cov Gagliardo_19572}
		\norm{u}_{W^{s,p}_{K_\shortparallel}(\partial\mathcal{M}, \mathcal{E})}^p = \smashoperator[r]{\iint_{\substack{\brk{x, y}\in\partial \mathcal{M} \times \partial \mathcal{M}\\ \operatorname{dist}_{\partial \mathcal{M}} \brk{x, y} < \mathrm{inj}_{\partial \mathcal{M}}}}} \frac{\norm{u(x)-R^{K_\shortparallel}_{\partial\mathcal{M}}\brk{x, y}u\brk{y}}^p}{\operatorname{dist}_{\partial \mathcal{M}}\brk{x, y}^{n+sp}}\dd x \dd y
	\end{equation}
	is finite. 
\end{definition}
We endow the space \(W^{s,p}_{K_\shortparallel}(\partial\mathcal{M}, \mathcal{E})\) with the norm
\begin{equation*}
	\|u\|_{W^{s,p}_{K_\shortparallel}(\partial\mathcal{M}, \mathcal{E})} = \brk[\Big]{\Norm{u}_{L^p(\partial\mathcal{M}, \mathcal E)}^p + \norm{u}^p_{W^{s,p}_{K_\shortparallel}(\partial\mathcal{M}, \mathcal{E})}}^{1/p}.
\end{equation*}
A standard proof shows that this defines a Banach space.

On the other hand, if \(\psi \colon V \to \partial \mathcal{M}\) with \(V \subseteq \R^n\) convex, we can consider for \(x, y \in V\)
\(
 R^{\psi^* \Gamma_\shortparallel } (x, y)
\)
defined in \eqref{eq: def R}.
The corresponding gauge-covariant Gagliardo seminorms with charts \(\psi\colon V \to \partial \mathcal{M}\) of the boundary are related in a non-local counterpart of the equivariance under pull-back of covariant derivatives of \cref{lemma: pull-back of connection}.

\begin{lemma}\label{lemma 6.2 in article}
	Let \(n\geq 1\), \(1\leq p < +\infty\) and \(0 < s < 1\). Let \(V \subset \R^n\) be open and bounded and let \(\psi \colon V \to \partial\mathcal{M}\) be a diffeomorphism up to the boundary on its image such that \(\operatorname{diam}(\psi(V)) < \mathrm{inj}_{\partial \mathcal{M}}\).
	If \(\Gamma_\shortparallel \in   C^1(\partial\mathcal{M}, \Lin{T(\partial\mathcal{M})}{\mathfrak{o}(F)})\), if \(V\) is convex and if \(\psi(V)\) is geodesically convex, then there exists a positive constant \(C\) such that for every \(u \in L^p(\psi(V), F)\),
	\begin{multline*}
		\smashoperator{\iint_{\psi(V)\times \psi(V)}} \frac{\norm{u\brk{x} - R^{\Gamma_\shortparallel}_{\partial\mathcal{M}}\brk{x, y}u\brk{y}}^p}{\operatorname{dist}_{\partial\mathcal{M}}\brk{x, y}^{n+sp}}\dd  y \dd  x \\
		\leq C\brk[\bigg]{\smashoperator[r]{\iint_{V\times V}} \frac{\norm{u(\psi(v))  - R^{\psi^\ast \Gamma_\shortparallel}(v,w)u(\psi(w))}^p}{|w-v|^{n+sp}} \dd w \dd v \\
		+ \min\{ \| \mathscr K_{\Gamma_\shortparallel}\|^p_\infty, \|\mathscr K_{\Gamma_\shortparallel}\|_\infty^{sp/3} \} \int_V | u(\psi(v))|^p \dd v}
	\end{multline*}
	and 
	\begin{multline*}
		\smashoperator{\iint_{V\times V}} \frac{|u(\psi(v))  - R^{\psi^\ast \Gamma_\shortparallel}(v,w)u(\psi(w))|^p}{|w-v|^{n+sp}} \dd w \dd v \\
		\leq C \brk[\bigg]{\smashoperator[r]{\iint_{\psi(V) \times \psi(V)}} \frac{|R^{\Gamma_\shortparallel}_{\partial\mathcal{M}}\brk{x, y}u\brk{y} - u\brk{x} |^p}{\operatorname{dist}_{\partial\mathcal{M}}\brk{x, y}^{n+sp}}\dd  y \dd  x   \\
		+ \min\{ \| \mathscr K_{\Gamma_\shortparallel}\|^p_\infty, \|\mathscr K_{\Gamma_\shortparallel}\|_\infty^{sp/3} \} \int_{\psi(V)} | u\brk{x}|^p \dd  x }.
	\end{multline*}
\end{lemma}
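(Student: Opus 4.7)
The plan is to reduce the two manifold-side Gagliardo-type integrals to their Euclidean-chart counterparts via the change of variables induced by \(\psi\), and to control the pointwise discrepancy between \(R^{\Gamma_\shortparallel}_{\partial\mathcal{M}}(\psi(v),\psi(w))\) and \(R^{\psi^\ast\Gamma_\shortparallel}(v,w)\) by a holonomy estimate, which will supply the factor appearing in the correction term.

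First, since \(\psi\) is a diffeomorphism up to the boundary on a compact set and \(\psi(V)\) is geodesically convex, its Jacobian is bounded above and below and there exist \(c, C > 0\) such that \(c\norm{v-w} \leq \operatorname{dist}_{\partial\mathcal{M}}(\psi(v),\psi(w)) \leq C\norm{v-w}\) for every \(v, w \in V\). The change of variables \((x,y) = (\psi(v), \psi(w))\) transforms the manifold-side Gagliardo integrand into a quantity comparable, up to constants depending only on \(\psi\), to its chart-side counterpart. Writing
\[
u(\psi(v)) - R^{\Gamma_\shortparallel}_{\partial\mathcal{M}}(\psi(v),\psi(w))u(\psi(w)) = \bigl(u(\psi(v)) - R^{\psi^\ast\Gamma_\shortparallel}(v,w)u(\psi(w))\bigr) + \bigl(R^{\psi^\ast\Gamma_\shortparallel}(v,w) - R^{\Gamma_\shortparallel}_{\partial\mathcal{M}}(\psi(v),\psi(w))\bigr)u(\psi(w)),
\]
the triangle inequality and \((a+b)^p \leq 2^{p-1}(a^p+b^p)\) reduce the proof of the first inequality to bounding
\[
\iint_{V \times V} \frac{\norm{R^{\Gamma_\shortparallel}_{\partial\mathcal{M}}(\psi(v),\psi(w)) - R^{\psi^\ast\Gamma_\shortparallel}(v,w)}^p \norm{u(\psi(w))}^p}{\norm{v-w}^{n+sp}}\, dv\, dw
\]
by the last term on the right-hand side. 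The second inequality is proved analogously by interchanging the roles of \(R^{\Gamma_\shortparallel}_{\partial\mathcal{M}}\) and \(R^{\psi^\ast\Gamma_\shortparallel}\) in the splitting.

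Next, by \cref{lemma: pull-back of connection} and \cref{lemma: Pt on manifold is well-defined}, \(R^{\psi^\ast\Gamma_\shortparallel}(v,w)\) coincides with the parallel transport along the curve \(t \mapsto \psi((1-t)v+tw)\) on \(\partial\mathcal{M}\), while \(R^{\Gamma_\shortparallel}_{\partial\mathcal{M}}(\psi(v),\psi(w))\) is the transport along the distance-minimising geodesic with the same endpoints. Constructing a smooth homotopy \(H \colon [0,1]\times[0,1] \to \psi(V)\) with fixed endpoints interpolating between these two curves, applying \cref{lemma: formula for derivative of Pt wrt parameter} (the boundary terms drop since \(\partial_s H(0,\cdot) = \partial_s H(1,\cdot) = 0\)) and integrating in \(s\), one expresses the difference of the two transports as a surface integral of \(\mathscr{K}_{\Gamma_\shortparallel}[\partial_s H, \partial_t H]\). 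Because in a coordinate chart the geodesic equation differs from that of a straight line by Christoffel symbols multiplied by \(\norm{\dot\gamma}^2 = O(\norm{v-w}^2)\), the two curves differ at each \(t\) by \(O(\norm{v-w}^2)\); hence \(H\) can be chosen so that \(\norm{\partial_t H} \lesssim \norm{v-w}\) and \(\norm{\partial_s H} \lesssim \norm{v-w}^2\), and the swept area is \(O(\norm{v-w}^3)\). Combined with the trivial estimate \(\norm{R_1 - R_2} \leq 2\) valid for isometries, this yields
\[
\norm{R^{\Gamma_\shortparallel}_{\partial\mathcal{M}}(\psi(v),\psi(w)) - R^{\psi^\ast\Gamma_\shortparallel}(v,w)} \leq \min\set{2,\, C\spNorm{\mathscr{K}_{\Gamma_\shortparallel}}{\infty}\norm{v-w}^3}.
\]

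Finally, substituting this bound, passing to polar coordinates in \(v-w\) and integrating radially reduces matters to evaluating \(\int_0^{\operatorname{diam}(V)} r^{-1-sp}\min\{2, C\spNorm{\mathscr{K}_{\Gamma_\shortparallel}}{\infty} r^3\}^p\, dr\). When \(r^\ast := \spNorm{\mathscr{K}_{\Gamma_\shortparallel}}{\infty}^{-1/3} \leq \operatorname{diam}(V)\), splitting the integral at \(r^\ast\) shows that both subintegrals are \(O(\spNorm{\mathscr{K}_{\Gamma_\shortparallel}}{\infty}^{sp/3})\); otherwise the cubic expression dominates throughout \([0,\operatorname{diam}(V)]\) and the integral is \(O(\spNorm{\mathscr{K}_{\Gamma_\shortparallel}}{\infty}^p)\), which together yield the \(\min\) factor appearing in the statement. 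The main technical obstacle is the \(\norm{v-w}^3\) area bound, which hinges on constructing an explicit homotopy exploiting the \(O(\norm{v-w}^2)\) deviation between geodesics and images of coordinate segments; once this is established, the remaining steps are routine radial integration.
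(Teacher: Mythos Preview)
Your proposal is correct and follows essentially the same approach as the paper: the paper isolates the holonomy comparison as a separate lemma (\cref{comparaison Aprime et psi* Aprime}), proved via the linear homotopy \(\gamma_s(t)=(1-s)\bigl((1-t)x+ty\bigr)+s\,\psi^{-1}(\zeta(t))\) in the chart \(V\), the geodesic equation giving \(\norm{\gamma_1''}\lesssim\norm{x-y}^2\), and \cref{lemma: formula for derivative of Pt wrt parameter}; the remaining change of variables, triangle-inequality splitting, and radial integration to produce the \(\min\{\Norm{\mathscr K_{\Gamma_\shortparallel}}_\infty^p,\Norm{\mathscr K_{\Gamma_\shortparallel}}_\infty^{sp/3}\}\) factor are exactly as you describe. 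The only cosmetic difference is that you run the homotopy on \(\psi(V)\subset\partial\mathcal{M}\) while the paper works in \(V\) with the pulled-back connection, which is equivalent by \cref{lemma: Pt on manifold is well-defined}.
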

The key tool to prove \cref{lemma 6.2 in article} is the following estimate comparing the two parallel transport operators.
\begin{lemma}\label{comparaison Aprime et psi* Aprime}
	Let \(n\geq 1\), let \(V\subset \R^n\) be open and bounded and let \(\psi \colon V \to \partial \mathcal{M}\) be a diffeomorphism up to the boundary on its image such that \(\operatorname{diam}(\psi(V)) < \mathrm{inj}_{\partial \mathcal{M}}\).
	If \(\Gamma_\shortparallel \in  C^1(\partial\mathcal{M},\Lin{\R^n}{\mathfrak{o}(F)})\), if \(V\) is convex and if \(\psi(V)\) is geodesically convex, then there exists a positive constant \(C > 0\) such that for all \(x\), \(y\in V\)
	\begin{equation*}
	|R^{\psi^\ast \Gamma_\shortparallel}\brk{x, y}R^{\Gamma_\shortparallel}_{\partial \mathcal{M}}(\psi\brk{y},\psi\brk{x}) - \id_F| \leq C \min\{2, \| \mathscr K_{\Gamma_\shortparallel}\|_{L^\infty(\psi(V))} \norm{x - y}^3 \}.
	\end{equation*}
\end{lemma}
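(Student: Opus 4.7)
The plan is to express both parallel transports as parallel transports on $V$ for the same pulled-back connection $\psi^{*}\Gamma_\shortparallel$ but along two different curves from $x$ to $y$, and to compare them by interpolating between these curves. By \cref{lemma: Pt on manifold is well-defined}, the operator $R^{\Gamma_\shortparallel}_{\partial\mathcal{M}}(\psi(y),\psi(x))$ equals the parallel transport of $\psi^{*}\Gamma_\shortparallel$ along the pulled-back geodesic $\tilde{\zeta}(t) = \psi^{-1}(\zeta_{\psi(y),\psi(x)}(t))$; after reversing parameterization to obtain $\tilde{\zeta}^{\mathrm{rev}}(t) = \tilde{\zeta}(1-t)$, which is a smooth curve from $x$ to $y$ inside the convex set $V$, one recognises
\(
R^{\psi^{*}\Gamma_\shortparallel}(x,y)\, R^{\Gamma_\shortparallel}_{\partial\mathcal{M}}(\psi(y),\psi(x)) - \id_F
\)
as measuring the discrepancy between the parallel transports for $\psi^{*}\Gamma_\shortparallel$ along $\gamma_{x,y}$ and along $\tilde{\zeta}^{\mathrm{rev}}$ (the two being composed with the inverse of the latter, which is an isometry). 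The trivial bound by $2$ follows immediately since both factors lie in $O(F)$.

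For the quantitative bound, I would introduce the linear homotopy
\[
H(t,s) = (1-s)\gamma_{x,y}(t) + s\,\tilde{\zeta}^{\mathrm{rev}}(t), \qquad (t,s)\in[0,1]^{2},
\]
which stays in $V$ by convexity, satisfies $H(0,s) = x$ and $H(1,s) = y$, and interpolates between the affine path and the pulled-back geodesic. Setting $\gamma_s = H(\cdot,s)$ and applying \eqref{eq_CaaPhah3kae2shei7oYeiLah} with $\Gamma = \psi^{*}\Gamma_\shortparallel$, the boundary terms vanish because $\tfrac{d}{ds}\gamma_s(0) = \tfrac{d}{ds}\gamma_s(1) = 0$, so that
\[
\|R^{\psi^{*}\Gamma_\shortparallel}(x,y)\, R^{\Gamma_\shortparallel}_{\partial\mathcal{M}}(\psi(y),\psi(x)) - \id_F\| \le \iint_{[0,1]^{2}} \|\mathscr{K}_{\psi^{*}\Gamma_\shortparallel}(H(t,s))[\partial_s H, \partial_t H]\| \, dt\, ds.
\]
The pullback identity $\mathscr{K}_{\psi^{*}\Gamma_\shortparallel} = \psi^{*}\mathscr{K}_{\Gamma_\shortparallel}$ then bounds the integrand pointwise by $\|\Diff\psi\|_{\infty}^{2}\|\mathscr{K}_{\Gamma_\shortparallel}\|_{L^{\infty}(\psi(V))}\, |\partial_s H|\,|\partial_t H|$.

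The proof thus reduces to the two pointwise estimates $|\partial_t H| \le C|x-y|$ and $|\partial_s H| \le C|x-y|^{2}$ uniformly on $[0,1]^{2}$. The first is immediate: $\partial_t \gamma_{x,y} = y - x$, and $\dot{\tilde{\zeta}}^{\mathrm{rev}}$ has magnitude of order $|x-y|$ because $\zeta_{\psi(y),\psi(x)}$ is a constant-speed geodesic whose length $\operatorname{dist}_{\partial\mathcal{M}}(\psi(x),\psi(y))$ is comparable to $|x-y|$ via the Lipschitz constants of $\psi$ and $\psi^{-1}$. The second estimate, which is the real source of the cubic power, rests on the fact that $\eta(t) = \tilde{\zeta}^{\mathrm{rev}}(t) - \gamma_{x,y}(t)$ vanishes at $t=0$ and $t=1$ while $|\ddot{\eta}| = |\ddot{\tilde{\zeta}}^{\mathrm{rev}}|$ is controlled by the geodesic equation in the coordinates induced by $\psi$, so that $|\ddot{\tilde{\zeta}}^{\mathrm{rev}}| \le C|\dot{\tilde{\zeta}}^{\mathrm{rev}}|^{2} \le C'|x-y|^{2}$; a standard one-dimensional argument (for instance, integration against the Dirichlet Green's function of $-\partial_t^{2}$ on $[0,1]$) then yields $\|\eta\|_{\infty} \le C''|x-y|^{2}$.

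The main obstacle I anticipate is obtaining this $C^{2}$ control on the pulled-back geodesic with constants independent of $x$ and $y$. This relies on uniform smoothness of the Christoffel symbols of $\psi^{*}g_{\partial\mathcal{M}}$ on $\overline{V}$, which is guaranteed by the assumptions that $\partial\mathcal{M}$ is smooth, $\psi$ is a diffeomorphism up to the boundary, and $\operatorname{diam}(\psi(V)) < \mathrm{inj}_{\partial\mathcal{M}}$; the resulting constants depend only on $\psi$ and the geometry of $\partial\mathcal{M}$ and can be absorbed into $C$. Chaining the estimates gives a bound of the form $C\|\mathscr{K}_{\Gamma_\shortparallel}\|_{L^{\infty}(\psi(V))}|x-y|^{3}$ for the double integral, and combining this with the trivial bound $2$ delivers the stated $\min$-inequality.
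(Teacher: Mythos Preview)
Your proposal is correct and follows essentially the same approach as the paper: both pull the geodesic back through $\psi^{-1}$, form the linear homotopy in $V$ between the affine segment and the pulled-back geodesic, invoke \cref{lemma: formula for derivative of Pt wrt parameter} (with vanishing endpoint terms) to reduce to a curvature integral, and obtain the cubic power from $|\partial_t H|\le C|x-y|$ together with $|\partial_s H|\le C|x-y|^2$, the latter coming from the geodesic equation bound $|\ddot{\tilde\zeta}^{\mathrm{rev}}|\le C|x-y|^2$ and the vanishing of $\eta$ at both endpoints. The paper writes out the Green-function step as an explicit integration by parts, but otherwise the arguments coincide.
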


\begin{proof}
Let \(\zeta\) be the minimal geodesic in \(\partial \mathcal{M}\) joining \(\psi \brk{x}\) to \(\psi \brk{y}\).
We define 
	\begin{equation}
	 \gamma_s \brk{t} = \brk{1 - s} \brk{(1-t)x+ty} + s {\psi^{-1}}\brk{\zeta (t)}.
	\end{equation}
	Since \(\zeta\) is a geodesic, \(\gamma_1 = \psi^{-1} \circ \zeta\) 
	satisfies the local geodesic equation
	\begin{equation*}
		\gamma_1^{\prime\prime}(t) = \Xi(\gamma_1(t)) [\gamma_1^\prime(t), \gamma_1^\prime(t)]
	\end{equation*}
	where \(\Xi\) is a symmetric \(2\)-form (defined with the Christofell symbols of coming from the Riemannian metric \(\mathcal{M}\) and the local chart \(\psi\)), see for example \cite{doCarmo}*{Ch.\ 3}.
	Since geodesics have constant speed proportional to the Euclidean distance between their endpoints, it follows that there exists \(C>0\) such that 
	\begin{equation}\label{zeta primeprime}
		\norm{\gamma_1^{\prime\prime}\brk{t}} \leq C \norm{x - y}^2.
	\end{equation}
	By the fundamental theorem of calculus and integration by parts, we obtain 
	\[
	\begin{split}
	 \gamma_1 \brk{t} - \gamma_0 \brk{t}
	 &= \gamma_1 \brk{t} - \brk{(1-t)x+ty}\\
	 &= \brk{1 - t} \int_{0}^t \gamma_1' \brk{\tau} \dd \tau - 
	 t \int_t^1 \gamma_1' \brk{\tau} \dd \tau\\
	 &= -\brk[\Big]{\brk{1 - t}\int_{0}^t \tau \gamma_1'' \brk{\tau} \dd \tau
	 + t \int_t^1 \brk{1 - \tau} \gamma_1'' \brk{\tau} \dd \tau},
	\end{split}
	\]
	and therefore, using \eqref{zeta primeprime}, we have 
	\[
	\begin{split}
	  \norm[\big]{\tfrac{\dd \gamma_s}{\dd s} \brk{t}}& \leq \brk[\Big]{(1-t)\int_0^t \tau \dd \tau + t\int_t^1 (1-\tau) \dd \tau} \Norm{\gamma_1^{\prime\prime}}_{L^\infty} \\
	  &= \tfrac{t \brk{1 -t}}{2} \Norm{\gamma_1''}_{L^\infty}
	  \le C \norm{x - y}^2.
	\end{split}
	\]
	We note that \(\gamma_0 = \gamma_{x, y}\) whereas \(\gamma_1 = \psi^{-1}\circ\zeta\), \(\gamma_s \brk{0} = x\) and \(\gamma_s \brk{1} = y\).
	By \cref{lemma: formula for derivative of Pt wrt parameter}, we have
	\begin{equation*}
\norm[\Big]{\tfrac{\dd}{\dd s}\Pt^\Gamma_{\gamma_s}\brk{0}} \le \int_0^1 \norm{\mathscr{K}_\Gamma(\gamma_s(\tau))[\gamma_s^\prime(\tau), \tfrac{\dd}{\dd s} \gamma_s(\tau)]} \dd \tau
\le  C \|\mathscr K_{\Gamma_\shortparallel} \|_{L^\infty(\psi(V))} \norm{x - y}^3,
\end{equation*}
and the conclusion follows.
\end{proof}
We are now ready to characterise traces and extensions of gauge-covariant Sobolev spaces for a Riemannian vector bundle \(\mathcal{E}\) which has a smooth and compact Riemannian manifold \(\mathcal{M}\) with boundary \(\partial \mathcal{M}\) and interior \(\mathcal{M}\) as base space and is endowed with a \(C^1\) connection \(K\). Under this assumption on \(K\), and thus the connection form \(\Gamma\) encoding \(K\) on a trivialisation \(\Omega\subset \mathcal{M}\), the gauge-covariant Sobolev space \(W^{1,p}_K(\mathcal{M}, \mathcal{E})\) is the same as the usual vector valued Sobolev space \(W^{1,p}(\mathcal{M},F)\) since the connection form \(\Gamma\) is bounded on any trivialisation. Therefore, the usual trace theorem applies and yields a linear and continuous trace operator
\begin{equation*}
	\operatorname{Tr}\colon  W^{1,p}_K(\mathcal{M}, \mathcal{E}) \to W^{1-1/p}(\partial\mathcal{M}, F).
\end{equation*}
However, the norm estimates are not gauge-invariant, and the trace space should depend on the connection \(K\) in a gauge-invariant manner. The results that follow achieve this enhanced characterisation.

We obtain the following estimates on the traces of gauge-covariant Sobolev spaces, generalising the particular case of \cite{Nguyen_VanSchaftingen_2020}*{Prop.\ 6.4} regarding magnetic Sobolev spaces. In this and the following proof, we denote by \(C\) positive constants independent of \(\Gamma \), \(u\) or \(U\). 
\begin{proposition}\label{prop: trace sur domaine}
	Let \(n\geq 1\), let \(1\leq p < +\infty\) and let \(\mathcal{M}\) be a smooth and compact Riemannian manifold of dimension \(n+1\) with boundary \(\partial\mathcal{M}\). Let \(\mathcal{E}\) be a Riemannian vector bundle with model fibre \(F\) and base space \(\mathcal{M}\). If \(\mathcal{E}\) is endowed with a metric \(C^1\) connection \(K\) with curvature \(\mathscr K_K\) satisfying \(\Norm{\mathscr K_K}_\infty \leq \beta\), then there exists a constant \(C=C(\mathcal{M},p)>0\) depending only on \(\mathcal{M}\) and \(p\) such that for all \(U\in  W^{1,p}_K(\mathcal{M}, \mathcal{E})\) we have, with \(u = \operatorname{Tr}U\),
	\begin{equation*}
		\smashoperator[r]{\iint_{\substack{\brk{x, y}\in\partial\mathcal{M}\times\partial\mathcal{M} \\ \operatorname{dist}_{\partial \mathcal{M}}\brk{x, y}<\mathrm{inj}_{\partial \mathcal{M}}}}}
		\frac{|u\brk{x} -R^{K_\shortparallel}_{\partial\mathcal{M}}\brk{x, y} u\brk{y}|^p}{\operatorname{dist}_{\partial\mathcal{M}}(x, y)^{n+p-1}} \dd  x \dd  y
		\leq C \int_{\mathcal{M}} |\Diff_K  U\brk{x} |^p + (1+\beta^{p/2})\norm{U \brk{x}}^p \dd x.
	\end{equation*}
\end{proposition}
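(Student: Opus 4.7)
The overall strategy is a localisation argument: reduce the intrinsic estimate on \(\partial\mathcal{M}\) to the half-space estimate of \cref{prop: trace estimates local version} through a finite family of joint coordinate-and-bundle charts, and translate the outcome back using \cref{lemma 6.2 in article}. By compactness of \(\partial\mathcal{M}\) I select finitely many diffeomorphisms \(\psi_\alpha\colon V_\alpha\to\partial\mathcal{M}\), \(\alpha=1,\dots,N\), with \(V_\alpha\subset\R^n\) a convex ball, \(\psi_\alpha(V_\alpha)\) geodesically convex of diameter less than \(\mathrm{inj}_{\partial\mathcal{M}}\), and such that \(\mathcal{E}\) is trivialisable over an ambient neighbourhood of \(\overline{\psi_\alpha(V_\alpha)}\). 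A Lebesgue number argument produces \(\delta\in(0,\mathrm{inj}_{\partial\mathcal{M}}]\) with the property that any pair \((x,y)\in\partial\mathcal{M}\times\partial\mathcal{M}\) with \(\operatorname{dist}_{\partial\mathcal{M}}(x,y)<\delta\) is jointly contained in some \(\psi_\alpha(V_\alpha)\). The collar-neighbourhood theorem then extends each \(\psi_\alpha\) to a diffeomorphism \(\Psi_\alpha\colon B_{R_\alpha}\times[0,R_\alpha]\to\Omega_\alpha\subset\mathcal{M}\) with \(\Psi_\alpha(\cdot,0)=\psi_\alpha\), chosen small enough that \(\mathcal{E}\) is still trivial on \(\overline{\Omega_\alpha}\); in such a trivialisation \(K\) is represented by a \(C^1\) connection form \(\Gamma_\alpha\).

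After a routine adaptation of \cref{density in WA} to the bundle setting reduces the claim to smooth sections, I set \(\widetilde U_\alpha=U\circ\Psi_\alpha\) read in the trivialisation and \(\widetilde\Gamma_\alpha:=\Psi_\alpha^{\ast}\Gamma_\alpha\). \Cref{lemma: pull-back of connection} gives \(\Diff_{\widetilde\Gamma_\alpha}\widetilde U_\alpha=\Psi_\alpha^{\ast}(\Diff_K U)\) together with the pointwise comparability \(|\Diff_{\widetilde\Gamma_\alpha}\widetilde U_\alpha(z)|\le C|\Diff_K U(\Psi_\alpha(z))|\); the pulled-back curvature is \(\Psi_\alpha^{\ast}\mathscr{K}_{\Gamma_\alpha}\) and is bounded pointwise by \(C_\mathcal{M}\beta\). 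Applying \cref{prop: trace estimates local version} with \(s=1-1/p\) and effective curvature bound \(C_\mathcal{M}\beta+1/R_\alpha^2\le C(1+\beta)\) to \(\widetilde U_\alpha\), and then changing variables to undo the pull-back, yields
\begin{equation*}
\iint_{V_\alpha\times V_\alpha}\frac{|\widetilde U_\alpha(v,0)-R^{\widetilde\Gamma_\alpha}(v,w)\widetilde U_\alpha(w,0)|^p}{|v-w|^{n+p-1}}\dd v\dd w\le C\int_{\Omega_\alpha}|\Diff_K U|^p+(1+\beta^{p/2})|U|^p.
\end{equation*}

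The first inequality of \cref{lemma 6.2 in article} converts the pulled-back Gagliardo seminorm on \(V_\alpha\) into the intrinsic Gagliardo seminorm on \(\psi_\alpha(V_\alpha)\), paying an extra term bounded by \(C(1+\beta^{p/2})\int_{\psi_\alpha(V_\alpha)}|u|^p\); the scalar \(L^p\)-trace inequality applied to \(|U|\), together with the diamagnetic inequality of \cref{lemma: diamagnetic inequality}, controls this integral by \(C\int_{\mathcal{M}}|\Diff_K U|^p+|U|^p\), which is absorbed on the right-hand side of the target inequality. Summing the \(N\) chart contributions bounds \(\iint_{\operatorname{dist}_{\partial\mathcal{M}}(x,y)<\delta}\); for the remaining pairs with \(\delta\le\operatorname{dist}_{\partial\mathcal{M}}(x,y)<\mathrm{inj}_{\partial\mathcal{M}}\), the isometry property of \(R^{K_\shortparallel}_{\partial\mathcal{M}}\) yields the pointwise bound \(|u(x)|+|u(y)|\) and, after integration over this bounded-below-diameter region, an upper bound by \(C\|u\|_{L^p(\partial\mathcal{M})}^p\) absorbed by the same trace/diamagnetic argument. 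Combining all contributions gives the announced estimate.

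The main technical obstacle is the careful constant-tracking across the Lebesgue number, collar extension, and pull-back steps so that every implicit constant depends only on \(\mathcal{M}\) and \(p\) and never on the ad hoc cover or on \(\beta\); this is a purely geometric bookkeeping matter, since the analytic content of the estimate is entirely carried by \cref{prop: trace estimates local version} and \cref{lemma 6.2 in article}, together with the gauge-covariant pull-back identity of \cref{lemma: pull-back of connection}.
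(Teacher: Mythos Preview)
Your overall architecture matches the paper's: cover \(\partial\mathcal{M}\) by finitely many collar-type charts over which \(\mathcal{E}\) trivialises, apply \cref{prop: trace estimates local version} with \(s=1-1/p\) in each chart, and convert back via \cref{lemma: pull-back of connection} and \cref{lemma 6.2 in article}. Your explicit Lebesgue-number treatment of the far pairs \(\delta\le\operatorname{dist}_{\partial\mathcal{M}}(x,y)<\mathrm{inj}_{\partial\mathcal{M}}\) is in fact more careful than what the paper writes.

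There is, however, a genuine gap in how you absorb the remainder from \cref{lemma 6.2 in article}. That lemma produces an extra term of size \(\min\{\beta^{p},\beta^{(p-1)/3}\}\int_{\partial\mathcal{M}}|u|^{p}\), which you bound (loosely) by \(C(1+\beta^{p/2})\int_{\partial\mathcal{M}}|u|^{p}\). You then invoke the additive scalar trace inequality \(\int_{\partial\mathcal{M}}|u|^{p}\le C\int_{\mathcal{M}}(|\Diff_K U|^{p}+|U|^{p})\), which after multiplication leaves a factor \((1+\beta^{p/2})\) in front of \(\int_{\mathcal{M}}|\Diff_K U|^{p}\). This is \emph{not} absorbed by the right-hand side of the target inequality, where the gradient term carries no \(\beta\)-weight; your constant would depend on \(\beta\).

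The fix, which is what the paper does, is twofold. First, keep the sharper bound \(\beta^{(p-1)/3}\le\beta^{(p-1)/2}\) (assuming \(\beta\ge1\) without loss of generality). Second, replace the additive trace inequality by its multiplicative form
\[
\int_{\partial\mathcal{M}}|u|^{p}\le C\Bigl(\int_{\mathcal{M}}|\Diff_K U|^{p}+|U|^{p}\Bigr)^{1/p}\Bigl(\int_{\mathcal{M}}|U|^{p}\Bigr)^{1-1/p},
\]
obtained from the divergence theorem and the diamagnetic inequality. Young's inequality then splits
\[
\beta^{(p-1)/2}\,a^{1/p}b^{(p-1)/p}\le \tfrac{1}{p}\,a+\tfrac{p-1}{p}\,\beta^{p/2}b,
\]
placing the entire \(\beta\)-weight on \(\int_{\mathcal{M}}|U|^{p}\) and none on the gradient term, exactly as the statement requires.
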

Here and in what follows, we define on the boundary \(\partial\mathcal{M}\) the \(C^1\) connection \(K_\shortparallel\) locally encoded by the connection form \(\Gamma_\shortparallel\) as the restriction of \(\Gamma\) to \(\pi_{\mathcal{E}}^{-1} \brk{\partial \mathcal{M}}\), or equivalenty, as the pull-back by the inclusion map from \(\partial \mathcal{M}\) to \(\mathcal{M}\).

\begin{proof}[Proof of \cref{prop: trace sur domaine}]
	Since smooth sections \(C^\infty(\mathcal{M}, \mathcal{E})\) are dense in \(W^{1,p}(\mathcal{M},F)=  W^{1,p}_K(\mathcal{M}, \mathcal{E})\), see for example~\cite{brezis}*{Cor.\ 9.8}, we may assume \(U\in  C^\infty(\mathcal{M}, F)\). By the divergence theorem and \cref{lemma: diamagnetic inequality}, we have
	\begin{equation}\label{extensio domaine eq1}
		\int_{\partial \Omega} \norm{U \brk{x}}^p \dd  x
		\leq C \brk[\bigg]{\int_{\Omega} |\Diff_\Gamma  U\brk{x} |^p + \norm{U \brk{x}}^p \dd x }^{1/p}\brk[\bigg]{\int_{\Omega} \norm{U \brk{x}}^p \dd x }^{1-1/p}
	\end{equation}
	for any local trivialisation \(\Omega \subset \mathcal{M}\). Since \(\partial\mathcal{M}\) is smooth and compact, there exists a finite number of maps, say \(j\in\{1, \dotsc, k\}\),
	\begin{equation*}
		\psi_j\colon B_1 \times [0,1) \to \mathcal{M}
	\end{equation*}
	that are diffeomorphisms on their image up to the boundary such that \(\mathcal{E}\) is trivial over \(\psi_j(B_1 \times [0,1))\), 
	\begin{gather*}
		\psi_j( B_1 \times \{0\}) = \psi_j(B_1 \times [0,1))\cap \partial\mathcal{M}, \\
		\partial\mathcal{M} \subset \bigcup_{j=1}^k \psi_j(B_{1/2}\times \{0\})
	\end{gather*}
	and, for every \(j\in\{1, \dotsc , k\}\), \(\psi_j(B_1 \times \{0\})\) is geodesically convex with 
	\begin{equation*}
		\operatorname{diam}(\psi_j(B_1 \times \{0\} ))< \mathrm{inj}_{\partial \mathcal{M}}.
	\end{equation*}
	Since the vector bundle \(\mathcal{E}\) is trivial over \(\psi_j(B_1 \times [0,1))\), there exists a connection form \(\Gamma_j \in C^1\brk[\big]{\psi_j(B_1 \times [0,1)), \Lin{T\mathcal{M}}{\mathfrak{o}(F)}}\) such that \(\Diff_K = \Diff_{\Gamma_j}\) on \(\psi_j(B_1 \times [0,1))\).
	In virtue of \cref{prop: trace estimates local version} with \(s=1-\frac{1}{p}\), we have, for \(j\in\{1,\dotsc, k\}\),
	\begin{multline}\label{opqr}
		\int_{B_1}\int_{B_1} \frac{|u (\psi_j(x,0) )  - R^{\psi_j^\ast \Gamma_{j\shortparallel}}\brk{x, y} u(\psi_j(y,0) )|^p}{|x-y|^{n+p-1}} \dd x \dd y \\
		\leq C \iint_{B_1 \times (0,1)}|\Diff_{\psi_j^\ast \Gamma_{j\shortparallel}}(U\circ \psi_j)(z)  |^p + \beta^{p/2} |(U \circ \psi_j)(z)|^p \dd z.
	\end{multline}
	Since \(\beta\) is an arbitrary upper bound on \(\|\mathscr K_{K}\|_\infty\), we may assume without loss of generality that \(\beta \geq 1\). \Cref{lemma: pull-back of connection} and \cref{lemma 6.2 in article} then imply together with \eqref{opqr} that 
	\begin{multline}\label{opqr2}
		\smashoperator[r]{\iint_{\psi_j(B_1\times\{0\}) \times \psi_j(B_1\times \{0\})}}
		\frac{|u\brk{x} - R^{\Gamma_{j\shortparallel}}_{\partial\mathcal{M}}\brk{x, y} u\brk{y}|^p}{\operatorname{dist}_{\partial\mathcal{M}}\brk{x, y}^{n+p-1}} \dd  x \dd  y \\
		\leq C \brk[\bigg]{\smashoperator[r]{\int_{\psi_j(B_1\times(0,1)) }} |\Diff_{\Gamma_j}  U\brk{x} |^p + \beta^{p/2}| U\brk{x} |^p  \dd x + \beta^{(p-1)/3} \smashoperator{\int_{\psi_j(B_1 \times \{0\}) }} \norm{u\brk{x}}^p \dd  x }.
	\end{multline}
	Since \(\beta^{(p-1)/3} \leq \beta^{(p-1)/2}\), using \eqref{extensio domaine eq1} and	 Young's inequality we obtain 
	\begin{equation}
	\label{opqr3}
		\beta^{\frac{p-1}{3}} \smashoperator{\int_{\psi_j(B_1\times\{0\})}} \norm{u\brk{x}}^p \dd  x
	\leq C \smashoperator{\int_{\psi_j(B_1\times (0,1))}} \norm{\Diff_{\Gamma_j}  U\brk{x}}^p + (1+\beta^{p/2}) \norm{U\brk{x}}^p \dd x.
	\end{equation}
	The conclusion follows by summing over \(j\in \{1, \dotsc , k\}\) and combining the estimate \eqref{opqr2} with the estimate \eqref{opqr3}.
\end{proof}
Regarding extensions, we have the following result generalising its analogue for magnetic Sobolev spaces~\cite{Nguyen_VanSchaftingen_2020}*{Prop.\ 6.5}.
\begin{proposition}\label{prop extensions sur domaine}
	Let \(n\geq 1\), let \(1\leq p < +\infty\) and let \(\mathcal{M}\) be a smooth and compact Riemannian manifold of dimension \(n+1\) with boundary \(\partial\mathcal{M}\) and interior \(\mathcal{M}\). Let \(\mathcal{E}\) be a Riemannian vector bundle with model fibre \(F\) and base space \(\mathcal{M}\). If \(\mathcal{E}\) is endowed with a metric \(C^1\) connection \(K\) with curvature \(\mathscr K_K\) satisfying \(\Norm{\mathscr K_K}_\infty \leq \beta\), then there exists a constant \(C=C(\mathcal{M},p)>0\) depending only on \(\mathcal{M}\) and \(p\) such that for all \(u\in W_{K_\shortparallel}^{1-1/p,p}(\partial\mathcal{M}, \mathcal{E})\), there exists \(U\in  W^{1,p}_K(\mathcal{M}, \mathcal{E})\cap   C^1(\mathcal{M}, \mathcal{E})\) such that \(\operatorname{Tr}U=u\),
	\begin{multline*}
		\int_{\mathcal{M}} |\Diff U\brk{x} |^p \dd x
		\leq C \biggl( \iint\displaylimits_{\substack{\brk{x, y}\in \partial\mathcal{M} \times \partial\mathcal{M} \\ \operatorname{dist}_{\partial \mathcal{M}}\brk{x, y}<\mathrm{inj}_{\partial \mathcal{M}}}} \frac{|u\brk{x} - R^{K_\shortparallel}_{\partial\mathcal{M}}\brk{x, y} u\brk{y} |^p}{\operatorname{dist}_{\partial\mathcal{M}}\brk{x, y}^{n+p-1}} \dd y \dd  x \\[-1em]
		+ (1+\beta^{(p-1)/2}) \int_{\partial\mathcal{M}} \norm{u\brk{x}}^p \dd x \biggr)
	\end{multline*}
	and 
	\begin{equation*}
		\int_{\mathcal{M}} | U\brk{x} |^p \dd x \leq \frac{C}{1+\beta^{p/2}} \int_{\partial\mathcal{M}} \norm{u\brk{x}}^p \dd x.
	\end{equation*}
	\begin{proof}
		Since \(\partial\mathcal{M}\) is smooth and compact, there exists a finite number of maps, say \(j\in\{1, \dotsc, k\}\),
		\begin{equation*}
			\psi_j\colon B_1 \times [0,1) \to \mathcal{M}
		\end{equation*}
		that are diffeomorphisms on their image up to the boundary such that \(\mathcal{E}\) is trivial over \(\psi_j(B_1\times[0,1))\), 
		\begin{gather*}
			\psi_j( B_1 \times \{0\}) = \psi(B_1 \times [0,1))\cap \partial\mathcal{M}, \\
			\partial\mathcal{M} \subset \bigcup_{j=1}^k \psi_j(B_{1/2}\times \{0\})
		\end{gather*}
		and, for every \(j\in\{1, \dotsc , k\}\), \(\psi_j(B_1 \times \{0\})\) is geodesically convex with 
		\begin{equation*}
			\operatorname{diam}(\psi(B_1 \times \{0\} )) < \mathrm{inj}_{\partial \mathcal{M}}.
		\end{equation*}
		Moreover, there exists a collection of smooth functions \(\{\eta_j\}_{j=1}^k \subset   C^\infty(\overline{\mathcal{M}},\R)\) such that for all \(j\in \{1,\dotsc, k\}\)
		\begin{equation*}
			\Supp \eta_j \subset \psi_j(B_{1/2}\times [0,1/2]) 
		\end{equation*}
		and \(\sum_{j=1}^k \eta_j = 1\) on \(\mathcal{M}\).
		
		In virtue of \cref{prop: extension local version} with \(s=1-\frac{1}{p}\), for every \(j\in\{1,\dotsc ,k\}\), there exists 
		\begin{equation*}
			U_j \in W^{1,p}_{\psi_j^\ast \Gamma_j }(B_{1/2} \cap [0,1/2], \mathcal{E}) \cap   C^1 ( B_{1/2} \times [0,1/2], \mathcal{E})    
		\end{equation*}
		such that \(\operatorname{Tr}U_j = u \circ \psi_j\) on \(B_{1/2}\times\{0\}\) and furthermore \(U_j\) satisfies
		\begin{multline}\label{A eq1}
			\int_{B_{1/2}}\int_0^{\frac12} |\Diff_{\psi_j^\ast \Gamma_j } U_j(x,t) |^p \dd t \dd x \\
			\leq C \biggl( \int_{B_1}\int_{B_1} \frac{|u(\psi_j(x,0))  - R^{\psi_j^\ast \Gamma_{j\shortparallel}}\brk{x, y} u(\psi_j(y,0))|^p}{\norm{x - y}^{n+p-1}} \dd x \dd y \\
			+ \beta^{(p-1)/2} \int_{B_1} |u(\psi_j(x,0)) |^p  \dd x\biggr)
		\end{multline}
		and, assuming without loss of generality that \(\beta \geq 1\),
		\begin{equation}\label{A eq2}
			\int_{B_{1/2}}\int_0^{\frac{1}{2}} |\Diff_{\psi_j^\ast \Gamma_{j\shortparallel}}U_j\brk{x}|^p \dd x\leq C \int_{B_1} |(u \circ \psi)(x,0) |^p \dd x. 
		\end{equation}
		Applying \cref{lemma: pull-back of connection} and \cref{lemma 6.2 in article} to \eqref{A eq1} and \eqref{A eq2}, we obtain
		\begin{multline}\label{A3}
			\smashoperator[r]{\int_{\psi_j( B_{1/2}\times[0,1/2])}} | \Diff_{\Gamma_j} (U_j\circ \psi_j^{-1})\brk{x} |^p \dd x \\
			\leq C \biggl( \smashoperator[r]{\iint_{\psi_j(B_1 \times \{0\}) \times \psi_j(B_1 \times\{0\})}} \frac{\norm{u\brk{x} - R^{\Gamma_{j\shortparallel}}_{\partial\mathcal{M}}\brk{x, y}u\brk{y}}^p }{\operatorname{dist}_{\partial\mathcal{M}}\brk{x, y}^{n+p-1}} \dd x\dd y + \beta^{(p-1)/2} \smashoperator{\int_{\psi_j(B_1 \times\{0\})}} \norm{u\brk{x}} \dd x \biggr)
		\end{multline}
		and
		\begin{equation}\label{A4}
			\smashoperator{\int_{\psi_j( B_{1/2}\times [0,1/2])}} | U_j\brk{x} |^p \dd x \leq C \smashoperator[r]{\int_{\psi_j(B_1\times\{0\})}} \norm{u\brk{x}}^p \dd x.
		\end{equation}
		We define on \(\mathcal{M}\)
		\begin{equation*}
			U = \sum_{j=1}^k \eta_j (U_j \circ \psi_j^{-1}).
		\end{equation*}
		By the Leibniz rule for covariant derivatives we have
		\begin{equation}\label{A5}
			\Diff_K  U = \sum_{j=1}^k ( (U_j \circ \psi^{-1} )\Diff \eta_j + \eta_j \Diff_K (U_j \circ \psi_j^{-1}) ).
		\end{equation}
		The conclusion follows from \eqref{A3}, \eqref{A4} and \eqref{A5}.
	\end{proof}
\end{proposition}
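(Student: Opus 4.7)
The plan is to reduce the construction of the global extension $U$ to the local half-space extension of Proposition~\ref{prop: extension local version} via a finite atlas of the boundary and a partition of unity. First, I would choose a finite collection of trivialising charts $\psi_j \colon B_1 \times [0,1) \to \mathcal{M}$, $j = 1, \dots, k$, each a diffeomorphism onto its image, compatible with a trivialisation of $\mathcal{E}$, and such that $\psi_j(B_1 \times \{0\}) \subset \partial\mathcal{M}$ is geodesically convex with diameter strictly less than $\mathrm{inj}_{\partial\mathcal{M}}$. I would require in addition that $\{\psi_j(B_{1/2}\times\{0\})\}$ already covers $\partial\mathcal{M}$, and fix a smooth partition of unity $\{\eta_j\}$ subordinate to this cover with $\Supp \eta_j \subset \psi_j(B_{1/2}\times[0,1/2])$. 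On each chart the connection $K$ is represented by a connection form $\Gamma_j \in C^1$, and $\mathscr{K}_{\psi_j^*\Gamma_j} = \psi_j^*\mathscr{K}_K$ is uniformly bounded by $C\beta$.

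Next, for each $j$, I would apply \cref{prop: extension local version} with $s = 1 - 1/p$ to the pulled-back boundary datum $u \circ \psi_j \in C^1(B_1, F)$, yielding local extensions $U_j \in C^1(B_{1/2}\times[0,1/2], F)$ with $U_j(\cdot,0) = u \circ \psi_j$ on $B_{1/2}$, satisfying
\[
 \int_{B_{1/2}\times(0,1/2)} |\Diff_{\psi_j^*\Gamma_{j}} U_j|^p \dd z
 \le C \brk[\Big]{\norm{u\circ\psi_j}_{W^{1-1/p,p}_{\psi_j^*\Gamma_{j\shortparallel}}(B_1,F)}^p + \beta^{(p-1)/2}\int_{B_1}|u\circ\psi_j|^p}
\]
together with the weighted $L^p$ bound $\int |U_j|^p \le C\beta^{-1/2}\int|u\circ\psi_j|^p$ (after integrating out the weight). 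Here I absorbed the $\beta \ge R^{-2}$ hypothesis by assuming $\beta \ge 1$ (which costs only a constant, as $\beta$ is an arbitrary upper bound on $\|\mathscr K_K\|_\infty$).

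Then, I would transfer the local half-space Gagliardo seminorm estimate to the intrinsic one on $\partial\mathcal{M}$ using \cref{lemma: pull-back of connection} (for the covariant derivative) and \cref{lemma 6.2 in article} (for the fractional seminorm), exchanging $R^{\psi_j^*\Gamma_{j\shortparallel}}$ with $R^{K_\shortparallel}_{\partial\mathcal{M}}$ up to an error controlled by $\min\{\|\mathscr{K}_K\|_\infty^p, \|\mathscr{K}_K\|_\infty^{(p-1)/3}\} \int |u|^p$, which is absorbed into the $(1+\beta^{(p-1)/2})\int|u|^p$ term.

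Finally, I would define the global extension by patching:
\[
 U = \sum_{j=1}^k \eta_j\,(U_j \circ \psi_j^{-1}),
\]
extended by $0$ outside a neighbourhood of $\partial \mathcal{M}$. Because each $\eta_j$ is scalar, the covariant Leibniz rule gives
\[
 \Diff_K U = \sum_{j=1}^k \brk[\big]{(U_j\circ\psi_j^{-1})\otimes\Diff\eta_j + \eta_j\Diff_K(U_j\circ\psi_j^{-1})},
\]
and the first term is controlled by $\|\Diff\eta_j\|_\infty^p \int|U_j|^p$, handled by the weighted $L^p$ bound. Summing over $j$ yields both requested estimates. The main technical obstacle is tracking the sharp power of $\beta$ through the patching: the Leibniz error term produces an $L^p$ norm of $U_j$ which, by the weighted $L^p$ estimate from \cref{prop: extension local version}, comes with a factor $\beta^{-(1-s)p/2} = \beta^{-1/2}$, so when multiplied by $\beta^{p/2}$ it contributes at most $\beta^{(p-1)/2}\int |u|^p$, compatible with the claimed bound; verifying that all error terms combine with the right $\beta$-power is where care is needed.
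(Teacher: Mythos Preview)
Your proposal is correct and follows essentially the same approach as the paper: a finite boundary atlas with trivialisations, application of \cref{prop: extension local version} with \(s=1-1/p\) in each chart, transfer of the local Gagliardo seminorm to the intrinsic one via \cref{lemma: pull-back of connection} and \cref{lemma 6.2 in article}, and gluing by a partition of unity using the Leibniz rule. Your discussion of the \(\beta\)-power bookkeeping for the Leibniz error term is in fact slightly more explicit than the paper's, which leaves that verification to the reader.
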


\begin{bibdiv}
\begin{biblist}

\bib{Adams_Fournier_2003}{book}{
    author={Adams, Robert},
    author={Fournier, John},
    title={Sobolev spaces},
    publisher={Elsevier},
    place={New York},
    series={Pure and Applied Mathematics},
    volume={140},
    date={2003},
    edition = {2}
}

\bib{Ai_Song_Zhu_2019}{article}{
   author={Ai, Wanjun},
   author={Song, Chong},
   author={Zhu, Miaomiao},
   title={The boundary value problem for Yang-Mills-Higgs fields},
   journal={Calc. Var. Partial Differential Equations},
   volume={58},
   date={2019},
   number={4},
   pages={Paper No. 157, 37},
   issn={0944-2669},
   doi={10.1007/s00526-019-1587-z},
}

\bib{Balinsky_Evans_Lewis_2015}{book}{
    author={Balinsky, Alexander A.},
    author={Evans,W. Desmond},
    author={Lewis, Roger T.},
    title={The analysis and geometry of Hardy's inequality},
    publisher={Springer},
    place={New York},
    series={Universitext},
    date={2015}
}
\bib{Bonheure_Nys_VanSchaftingen_2019}{article}{
    author={Bonheure, Denis},
    author={Nys, Manon},
    author={Van Schaftingen, Jean},
    title={Properties of groundstates of nonlinear Schrödinger equations under a weak constant magnetic field},
    journal={J. Math. Pures Appl.},
    date={2019},
    volume={124},
    pages={123--168},
    DOI={10.1016/j.matpur.2018.05.007} 
}

\bib{brezis}{book}{
    author={Brezis, Haïm},
    title={Functional analysis, Sobolev spaces and partial differential equations},
    publisher={Springer},
    place={London},
    series={Universitext},
    date={2011},
    doi={10.1007/978-0-387-70914-7},
}

\bib{Chanillo_VanSchaftingen_2020}{article}{
    author={Chanillo,Sagun},
    author={Van Schaftingen, Jean},
    title={Estimates of the amplitude of holonomies by the curvature of a connection on a bundle},
    journal={Pure Appl.\ Funct.\ Anal.},
    publisher={Yokohama Publishers},
    date={2020},
    volume={5},
    number={4},
    pages={891--897},
    eprint={http://www.yokohamapublishers.jp/online2/oppafa/vol5/p891.html}
}

\bib{Chen_Song_2021}{article}{
   author={Chen, Bo},
   author={Song, Chong},
   title={Isolated singularities of Yang-Mills-Higgs fields on surfaces},
   journal={Int. Math. Res. Not. IMRN},
   date={2021},
   number={1},
   pages={551--581},
   issn={1073-7928},
   doi={10.1093/imrn/rnaa030},
}

\bib{Choquet-Bruhat_Christodoulou_1981}{article}{
   author={Choquet-Bruhat, Yvonne},
   author={Christodoulou, Demetrios},
   title={Existence of global solutions of the Yang-Mills, Higgs and spinor
   field equations in $3+1$ dimensions},
   journal={Ann. Sci. \'{E}cole Norm. Sup. (4)},
   volume={14},
   date={1981},
   number={4},
   pages={481--506 (1982)},
   issn={0012-9593},
}

\bib{diBenedetto_2016}{book}{
    author={DiBenedetto, Emmanuele},
    title={Real analysis},
    series={Birkh\"{a}user Advanced Texts: Basler Lehrb{\"u}cher},
    edition={2},
    publisher={Birkh\"{a}user/Springer}, 
    address={New York},
    date={2016},
    pages={xxxii+596},
    isbn={978-1-4939-4003-5},
    isbn={978-1-4939-4005-9},
    doi={10.1007/978-1-4939-4005-9},
}

\bib{doCarmo}{book}{
    author={do Carmo, Manfredo},
    title={Riemannian geometry},
    publisher={Birkh\"auser},
    place={Boston},
    series={Mathematics: Theory \& Applications},
    date={1992}
}

\bib{Esteban_Lions_1989}{article}{
    author={Esteban, Maria},
    author = {Lions, Pierre-Louis},
    title={Stationary Solutions of Nonlinear Schr\"odinger Equations with an External Magnetic Field},
    date = {1989},
    journal = {Progr.\ Nonlinear Differential Equations Appl.},
    publisher = {Birk\"auser},
    volume = {1},
    pages = {401--449},
    DOI = {10.1007/978-1-4615-9828-2\_18}
}

\bib{Flanders_1973}{article}{
   author={Flanders, Harley},
   title={Differentiation under the integral sign},
   journal={Amer. Math. Monthly},
   volume={80},
   date={1973},
   pages={615--627},
   issn={0002-9890},
   doi={10.2307/2319163},
}

\bib{Gagliardo_1957}{article}{
    author = {Gagliardo, Emilio},
    title = {Caratterizzazioni delle tracce sulla frontiera relative ad alcune classi di funzioni in $n$ variabili},
    language = {Italian},
    journal = {Rend.\ Sem.\ Mat.\ Univ.},
    place = {Pandova},
    volume = {27},
    date = {1957},
    pages = {284--305}
     
}

\bib{Hall_2013}{book}{
    author={Hall, Brian C.},
    title={Quantum Theory for Mathematicians},
    publisher={Springer},
    place={London},
    series={Graduate Texts in Mathematics},
    volume={267},
    date={2013}
}

\bib{Kobayashi_Nomizu_1963}{book}{
    author={Kobayashi, {Sho}shichi},
    author={Nomizu, {Ka}tsumi},
    title={Foundations of Differential Geometry},
    subtitle={volume \Romannum{1}},
    publisher={Interscience Publishers},
    place={London},
    series={Tracts in Mathematics},
    volume={15},
    date={1963}
}

\bib{Jost_Kessler_Wu_Zhu_2022}{article}{
	author={Jost, J\"{u}rgen},
	author={Ke\ss ler, Enno},
	author={Wu, Ruijun},
	author={Zhu, Miaomiao},
	title={Geometric analysis of the Yang-Mills-Higgs-Dirac model},
	journal={J. Geom. Phys.},
	volume={182},
	date={2022},
	pages={Paper No. 104669, 24},
	issn={0393-0440},
	doi={10.1016/j.geomphys.2022.104669},
}

\bib{Kong_2014}{book}{
    author={Kong, Qingkai},
    title={A Short Course in Ordinary Differential Equations},
    edition={1},
    publisher={Springer},
    place={Cham},
    date={2014},
    series={Universitext},
}

\bib{Krieger_Sterbenz_2013}{article}{
   author={Krieger, Joachim},
   author={Sterbenz, Jacob},
   title={Global regularity for the Yang-Mills equations on high dimensional
   Minkowski space},
   journal={Mem. Amer. Math. Soc.},
   volume={223},
   date={2013},
   number={1047},
   pages={vi+99},
   issn={0065-9266},
   isbn={978-0-8218-4489-2},
   doi={10.1090/S0065-9266-2012-00566-1},
}

\bib{JeffreyLee_2009}{book}{
    author={Lee, Jeffrey M.},
    title={Manifolds and Differential Geometry},
    publisher={American Mathematical Society},
    place={Rhode Island},
    series={Graduate Studies in Mathematics},
    volume={104},
    date={2009}
}

\bib{JohnLee_2012}{book}{
    author={Lee, John M.},
    title={Introduction to smooth manifolds},
    edition={2},
    publisher={Springer},
    place={London, New-York},
    date={2012},
    series={Graduate Texts in Mathematics},
    volume={218},
    doi={10.1007/978-1-4419-9982-5},
}

\bib{Leoni_fractionnal_2023}{book}{
    author={Leoni, Giovanni},
    title={A first course in Fractional Sobolev spaces},
    publisher={American Mathematical Society},
    place={Providence, R.I.},
    series={Graduate Studies in Mathematics},
    volume={229},
    date={2023}
}

\bib{Lieb_Loss_2001}{book}{
    author={Lieb,Elliott H.},
    author={Loss, Michael},
    title={Analysis},
    publisher={American Mathematical Society},
    place={Providence, R.I.},
    series={Graduate Studies in Mathematics},
    volume={14},
    edition={2},
    date={2001}
}

\bib{Mazya_2011}{book}{
    author={Maz'ya, Vladimir},
    title={Sobolev spaces with applications to elliptic partial differential equations},
    series={Grundlehren der Mathematischen Wissenschaften},
    volume={342},
    edition={Second, revised and augmented edition},
    publisher={Springer, Heidelberg},
    date={2011},
    pages={xxviii+866},
    isbn={978-3-642-15563-5},
    doi={10.1007/978-3-642-15564-2},
}

\bib{Miller_1982}{article}{
	author={Miller,Nicholas },
	title={Weighted Sobolev spaces and pseudodifferential operators with smooth symbols},
	journal={Trans. Amer. Math. Soc.},
	volume={269},
	date={1982},
	pages={91--109},
	doi={10.1016/j.na.2014.10.027},
}

\bib{Mironescu_Russ_2015}{article}{
    author={Mironescu, Petru},
    author={Russ, Emmanuel},
    title={Traces of weighted Sobolev spaces. Old and new},
    journal={Nonlinear Anal.},
    volume={119},
    date={2015},
    pages={354--381},
    issn={0362-546X},
    doi={10.1090/S0002-9947-1982-0637030-4},
}

\bib{Muckenhoupt_Wheeden_1978}{article}{
	author={Muckenhoupt, Benjamin},
	author = {Wheeden, Richard},
	title={On the dual of weighted \(H^1\) of the half-space},
	journal={Stud. Math.},
	volume={63},
	number={1},
	pages={57--79},
	year={1978},
	doi={10.4064/sm-63-1-57-79},
	publisher={Institute of Mathematics, Polish Academy of Sciences}
}

\bib{Nguyen_Pinamonti_Squassina_Vecchi_2018}{article}{
    author={Nguyen, Hoai-Minh},
    author={Pinamonti, Andrea},
    author={Squassina, Marco},
    author={Vecchi, Eugenio},
    title={New characterizations of magnetic Sobolev spaces},
    journal={Adv. Nonlinear Anal.},
    date={2018},
    volume={7},
    number={2},
    pages={227--245},
    DOI={10.1515/anona-2017-0239} 
}

\bib{Nguyen_VanSchaftingen_2020}{article}{
    author={Nguyen, Hoai-Minh},
    author={Van Schaftingen, Jean},
    title={Characterization of the traces on the boundary of functions in magnetic Sobolev spaces},
    journal={Adv.\ Math.},
    date={2020},
    publisher = {Elsevier},
    volume={371},
    pages={107246},
    DOI={10.1016/j.aim.2020.107246} 
}

\bib{Parker_1982}{article}{
   author={Parker, Thomas H.},
   title={Gauge theories on four-dimensional Riemannian manifolds},
   journal={Comm. Math. Phys.},
   volume={85},
   date={1982},
   number={4},
   pages={563--602},
   issn={0010-3616},
   doi={10.1007/BF01403505},
}

\bib{Pigati_Stern_2021}{article}{
   author={Pigati, Alessandro},
   author={Stern, Daniel},
   title={Minimal submanifolds from the abelian Higgs model},
   journal={Invent. Math.},
   volume={223},
   date={2021},
   number={3},
   pages={1027--1095},
   issn={0020-9910},
   doi={10.1007/s00222-020-01000-6},
}

\bib{Song_2016}{article}{
   author={Song, Chong},
   title={Convergence of Yang-Mills-Higgs fields},
   journal={Math. Ann.},
   volume={366},
   date={2016},
   number={1-2},
   pages={167--217},
   issn={0025-5831},
   doi={10.1007/s00208-015-1321-x},
}

\bib{Sontz_bundle}{book}{
    author={Sontz, Stephen Bruce},
    title={Principal bundles},
    subtitle={The classical case},
    publisher={Springer},
    place={New York},
    series={Universitext},
    date={2015}
}

\bib{DifferentialGeometryTu}{book}{
    author={Tu, Loring W.},
    title={Differential geometry},
    subtitle={connections, curvature, and characteristic Classes},
    publisher={Springer},
    place={Cham},
    date={2017},
    series={Graduate Texts in Mathematics},
    volume={275}
}

\bib{Tureson_2000}{book}{
	author={Tureson, Bengt Ove},
	title={Nonlinear Potential Theory and Weighted Sobolev Spaces},
	series={Lecture Notes in Mathematics},
	volume={1736},
	edition={1},
	publisher={Springer Berlin, Heidelberg},
	date={2000},
	pages={xii+180},
	isbn={978-3-540-67588-4},
	doi={10.1007/BFb0103908},
}

\bib{Uspenskii_1961}{article}{
    author={Uspenski\u{\i}, S. V.},
    title={Imbedding theorems for classes with weights},
    language={Russian},
    journal={Trudy Mat. Inst. Steklov.},
    volume={60},
    date={1961},
    pages={282--303},
    issn={0371-9685},
    translation={
        journal={Am. Math. Soc. Transl.},
        volume={87},
        pages={121--145},
        date={1970},
    },
}

\bib{Wei_Yang_Yu_2024}{article}{
   author={Wei, Dongyi},
   author={Yang, Shiwu},
   author={Yu, Pin},
   title={On the Global Dynamics of Yang--Mills--Higgs Equations},
   journal={Comm. Math. Phys.},
   volume={405},
   date={2024},
   number={1},
   pages={4},
   issn={0010-3616},
   review={\MR{4690630}},
   doi={10.1007/s00220-023-04881-9},
}

\bib{Wendl_bundles}{article}{
    author={Wendl, Chris},
    title={Introduction to Bundles and Connections},
    status={in preparation (version of December 2016)},
    eprint={https://www.mathematik.hu-berlin.de/~wendl/Winter2016/DiffGeo1/}
}

\bib{Willem_2013}{book}{
    author={Willem,Michel},
    title={Functional analysis},
    subtitle={Fundamentals and applications}, 
    publisher={Birkhäuser},
    place={New-York},
    series={Cornerstones},
    date={2013},
    doi={10.1007/978-1-4614-7004-5},
}

\bib{Zhang_2004}{article}{
   author={Zhang, Xi},
   title={Compactness theorems for coupled Yang-Mills fields},
   journal={J. Math. Anal. Appl.},
   volume={298},
   date={2004},
   number={1},
   pages={261--278},
   issn={0022-247X},
   doi={10.1016/j.jmaa.2004.04.057},
}

\end{biblist}
\end{bibdiv}

\end{document}